\newtheorem{definition}{Definition}[section]
\newtheorem{theorem}{Theorem}[section]
\newtheorem{lemma}[theorem]{Lemma}
\newtheorem{corollary}[theorem]{Corollary}
\newtheorem{proposition}[theorem]{Proposition}
\newtheorem{remark}[theorem]{Remark}
\newcommand{\RR}{\mathbb{R}}
\DeclareMathOperator{\Div}{div}
\begin{document}

\title[Global existence of uniformly locally energy solution]{Global existence of uniformly
locally energy solution for the incompressible fractional Navier-Stokes equations}

\author[J. Li]{Jingyue Li}
\address[J. Li]{The Graduate School of China Academy of Engineering Physics, Beijing 100088, P.R. China}
\email{m\_lijingyue@163.com}

\date{\today}

\begin{abstract}
In this paper, we introduce the concept of local Leray solutions starting from a
locally square-integrable initial data to the fractional
Navier-Stokes equations with $s\in [3/4,1)$. Furthermore, we prove its local in time existence when $s\in (3/4, 1)$. In particular,  if the locally square-integrable initial data vanishs at
infinity, we show that  the fractional
Navier-Stokes equations  admit a global-in-time local
Leray solution  when $s\in [5/6, 1)$. For such local
Leray solutions starting from locally square-integrable initial data vanishing at
infinity, the singularity only occurs in $B_R(0)$ for some $R$.
\end{abstract}

\keywords{Fractional Navier-Stokes equations, nonlocal effect, local Leray solutions.}
\maketitle
\section{Introduction}
\setcounter{section}{1}
\setcounter{equation}{0}
In this paper, we consider the following incompressible fractional Navier-Stokes equations
in $\RR^3\times (0,\infty)$ with $s>0$:
\begin{equation}\tag{FNS}\label{NS}
\partial_t u+\Lambda^{2s}u+u\cdot\nabla u+\nabla P=0,\quad \Div u=0,
\end{equation}
equipped with the initial data
\begin{equation}\tag{FNSI}\label{NSI}
u(0,x)=u_0(x) \quad \text{in} \quad\RR^3.
\end{equation}
Here the vector field $u$ and the scalar function $P$ describe the velocity field and the
associated pressure, respectively. The fractional Laplacian operator $\Lambda^s$
is a non-local operator defined in terms of the Fourier transform:
\[\mathcal{F}(\Lambda^su)(\xi)=|\xi|^s\hat{u}(\xi).\]

The fractional Laplacian operator appears in a wide class of hydrodynamics, statistical mechanics, physiology, including L\'{e}vy flights, stochastic interfaces and anomalous diffusion problems, see \cite{Woy01,Res95}. So the fractional Navier-Stokes system \eqref{NS} has very important physical significance. For example, when $s\in (0,1)$, Zhang in \cite{Zha12} described the stochastic Lagrangian particle approach via \eqref{NS}.
In particular, when $s=1$, the system \eqref{NS} becomes the classical 3D incompressible Navier-Stokes equations.

Recently, there are increasing interesting results about weak and strong solutions of the fractional Navier-Stokes equations \eqref{NS}. The strong solutions of the fractional
Navier-Stokes equations \eqref{NS}-\eqref{NSI} have been studied in a series of work using analytical tools, see e.g. \cite{CMW08,KMX08}. In this paper, we focus on weak solutions to \eqref{NS}. This work can be traced back to Leray
\cite{Ler34}. In \cite{Ler34}, Leray introduced the concept of Leray-Hopf weak solutions to \eqref{NS}-\eqref{NSI} with $s=1$ and then showed the  global existence for each divergence free initial data $u_0\in L^2(\RR^3)$. Following
the argument in \cite{Ler34}, for all $s>\frac12$, we can prove that for any solenoidal vector $u_0\in L^2(\RR^3)$, system \eqref{NS}-\eqref{NSI} admits at least one weak solution in
$L^{\infty}([0,\infty);L^2(\RR^3))\cap L^2([0,\infty);\dot{H}^s(\RR^3))$, which satisfies the continuity at $t=0$ in $L^2(\RR^3)$ and
the energy estimate:
\begin{equation}\label{G-ene}
\int_{\RR^3}|u(t)|^2\,\mathrm{d}x+2\int^t_0\int_{\RR^3}|\Lambda^s
u|^2\,\mathrm{d}x\mathrm{d}\tau
\leq \int_{\RR^3}|u_0|^2\,\mathrm{d}x
\end{equation}
For sake of simplicity, such weak solutions are also called Leray-Hopf weak solutions. As for $s\geq 5/4$, Lions \cite{Lio69} proved that \eqref{NS}-\eqref{NSI}
admits a unique global smooth solution for any prescribed smooth initial data. However, for $s<5/4$, the uniqueness and global regularity of Leray-Hopf weak solutions are
still open problems. Instead of it, the study of partial regularity theory has been put on the agenda. In this respect, Scheffer
\cite{Sch76,Sch77,Sch80} first discussed a class
of weak solutions to classical Navier-Stokes equations, which satisfy the following local energy
inequality instead of \eqref{G-ene}:
\begin{equation}\label{L-ene}
\begin{split}
2\int^{\infty}_0\int_{\RR^3}|\nabla u|^2\varphi\,\mathrm{d}x\mathrm{d}\tau\leq&
\int^{\infty}_0\int_{\RR^3}\Big(|u|^2(\partial_t\varphi+\Delta\varphi)+(|u|^2+2p)u\cdot\nabla\varphi\Big)\,\mathrm{d}x
\mathrm{d}\tau
\end{split}
\end{equation}
for any nonnegative function $\varphi\in \mathcal{D}(\RR^3\times\RR^+)$. He proved such
weak solution has a singular set with finite $5/3$-Hausdorff measure. Later, Scheffer's
result was improved by Caffarelli, Kohn and Nirenberg in \cite{CKN82}, where they
introduced the concept of suitable weak solutions and proved that the 1D parabolic Hausdorff measure of the associated singular set is zero. For a
simplified proof, see Lin \cite{Lin98}.  For $1<s<5/4$, Katz and Pavlovi\'{c}  \cite{KP02} proved the first version of the
CKN theory, that is, the Hausdorff dimension of singular set at the first blow up time
is at most $5-4s$. Lately, Colombo, De Lellis and Massaccesi in \cite{CDM17} introduced the
definition of suitable weak solution via the harmonic extension of the higher order fractional
Laplace operator in \cite{Ya13} and proved a stronger version of the results of Katz and
Pavlovi\'{c}, which fully extends the CKN theory. For $3/4<s<1$, via
the harmonic extension established in \cite{CS07}, Tang and Yu in \cite{TY15} gave the definition and
existence of suitable weak solutions to \eqref{NS} and then generalized CKN theory to this case. The CKN theory at endpoint case $s=3/4$ was studied by Ren, Wang and Wu in \cite{RWW16}.

To give a precise statement about the question discussed in this paper, we introduce some Banach spaces in local measure sense. For any $p\in [1,\infty]$, we define
\begin{align*}
&L^p_{uloc}(\RR^n)\triangleq\{f\in L^p_{loc}(\RR^n)\,|\,\|f\|_{L^p_{uloc}(\RR^n)}<\infty\},\quad\|f\|_{L^p_{uloc}(\RR^n)}
\triangleq\sup_{x_0\in\RR^n}\|f\|_{L^p(B_1(x_0))}.\\
&E^p(\RR^n)\triangleq\{f\in L^p_{uloc}(\RR^n)\,|\,\lim_{|x_0|\to\infty}\|u\|_{L^p(B_1(x_0))}=0\}.
\end{align*}
It is obvious that $E^p$ is the closure of $C^{\infty}_0(\RR^n)$ under $L^p_{uloc}(\RR^n)$ norm when $1\leq p<\infty$. Let $\phi\in \mathcal{D}(\RR^n)$ be a nonnegative function such that $\phi=1$ for all $|x|\leq 1$ and $\phi=0$ for all $|x|\geq 2$. Set $\phi_{x_0}(\cdot)=\phi(x_0-\cdot)$, $x_0\in \RR^n$.
Then for all $s\in \RR$, $H^s_{uloc}(\RR^n)$ and $\bar{H}^s_{uloc}$ can be defined as follows:
\begin{align*}
&H^s_{uloc}(\RR^n)\triangleq\big\{u\in
H^s_{loc}(\RR^n)\,\big|\,\|u\|_{H^s_{uloc}(\RR^n)}<\infty\big\},\,\|u\|_{H^s_{uloc}(\RR^n)}
\triangleq\sup_{x_0\in
\RR^n}\|\phi_{x_0}u\|_{H^s(\RR^n)}.\\
&\bar{H}^s_{uloc}(\RR^n)\triangleq\big\{u\in
H^s_{uloc}(\RR^n)\,\big|\,\lim_{|x_0|\to\infty}\|\phi_{x_0}u\|_{H^s(\RR^n)}=0\}.
\end{align*}
Finally, we define the mixed time-space Banach space:
\begin{align*}
&L^{p,q}_{uloc}(T_1,T_2,\RR^n)\triangleq\{f\in
L^p([T_1,T_2];L^q_{loc}(\RR^n))\,|\,\|u\|_{L^{p,q}_{uloc}(T_1,T_2,\RR^n)}<\infty\},\\
&G^{p,q}(T_1,T_2,\RR^n)\triangleq\{f\in L^{p,q}_{uloc}(T_1,T_2,\RR^n)\,|\,\lim_{|x_0|\to +\infty
}\|u\|_{L^p([T_1,T_2];L^p(B_1(x_0)))}=0\}.
\end{align*}
with $\|f\|_{G^{p,q}(T_1,T_2,
\RR^n)}\triangleq\|f\|_{L^{p,q}_{uloc}(T_1,T_2,\RR^n)}\triangleq\sup_{x_0\in \RR^n}\|u\|_{L^p([T_1,T_2];L^q(B_1(x_0)))}$.
For brevity, we adapt the following notations:
\begin{align*}
&L^p\triangleq L^p(\RR^3),\,H^s\triangleq H^s(\RR^3),\, L^p_{uloc}(\RR^3)\triangleq L^p_{uloc},\, E^p(\RR^3)\triangleq E^p,\\
&H^s_{uloc}\triangleq H^s_{uloc}(\RR^3),\,\bar{H}^s\triangleq \bar{H}^s_{uloc}(\RR^3),\,L^{p,q}_{uloc}(0,T,\RR^3)\triangleq L^{p,q}_{uloc}(T),\,G^{p,q}(0,T,\RR^3)\triangleq G^{p,q}(T).
\end{align*}
For $s=1$, with the aid of $\varepsilon$-regularity theory established in \cite{CKN82},
Lemari\'{e}-Rieusset in \cite{LR02} introduced a concept of local Leray solutions to
\eqref{NS}, which satisfies local energy inequality \eqref{L-ene}, and proved that for any
divergence free initial data $u_0$ in $E^2$, \eqref{NS} admits a global in time local Leray
solution.

Based on the partial regularity results given in \cite{CDM17,RWW16,TY15}, a nature and
interesting question arises:

\textbf{(Q):} \textit{Does there exist a global in time weak solution to
\eqref{NS}-\eqref{NSI} with $s\in (3/4, 1)$ for each divergence free initial
data $u_0$ in $E^2$?}\\
Compared with $s=1$, to solve this question for the general case $s\neq 1$, we face a difficulty point which
does not appear in classical Navier-Stoke equations $(s=1)$.

Difficult point: Since the fractional operator $\Lambda^s$ is an nonlocal operator, we can't deal
with $\Lambda^{2s}u$ as classical Navier-Stokes equations. To overcome it, we
adapt the nonlocal commutator $[\Lambda^s,\psi]$ where $s\in (0,1)$ and $\varphi\in C^1(\RR^n)$. With it, when $s\in (0,1)$, formally, we directly write that
\[\langle \Lambda^{2s}u,u\varphi\rangle=\langle \varphi\Lambda^{2s}u,u\tilde{\varphi}\rangle=\langle [\varphi,\Lambda^{s}]\Lambda^{s}u,u\tilde{\varphi}\rangle+\langle\varphi\Lambda^{s}u,
[\Lambda^s,\tilde{\varphi}]u\rangle+\langle\varphi \Lambda^{s}u,\varphi \Lambda^{s}u\rangle\]
for any $\tilde{\varphi}\in C^1(\RR^n)$ satisfying $\varphi\tilde{\varphi}=1$.
While for $s>1$, due to that the commutator $[\Lambda^s,\varphi]$ fails to be controlled, we will first adapt the following technique:
\[\langle\Lambda^{2s}u,u\varphi\rangle=\langle(-\Delta)\Lambda^{2s-2}u,u\varphi\rangle
=-\langle\Lambda^{2s-2}\nabla u,\nabla (u\varphi)\rangle\]
and then  use the commutator  again. 
The work on $1<s\leq 5/4$ may need more complicated calculations. Hence, in this paper, we only consider the case $s<1$.

With the help of $[\Lambda^s,\varphi]$, we can define the \textit{local Leray solutions} to \eqref{NS}-\eqref{NSI} with $s<1$.
\begin{definition}[Local Leray solutions]\label{def.ller}
Let $s\in [\frac34,1)$ and $u_0$ be a solenoidal vector field in $L^2_{uloc}$. We call $u$
is a local Leray solution to \eqref{NS} on $\RR^3\times (0,T)$ starting from $u_0$ if $u$
satisfies the following conditions: 
\begin{enumerate}[\rm(i)]
  \item $u\in L^{\infty}([0,T'];L^{2}_{uloc})$ and $\Lambda^su\in L^{2,2}_{uloc}(T')$ for any
      $T'<T$;
  \item $\exists P\in \mathcal{D}'(\RR^3\times (0,T))$, $\partial_t u+\Lambda^{2s}
      u+u\cdot\nabla u+\nabla P=0$ in the sense of distribution;
  \item for any compact subset $K$ of $\RR^3$, $\lim_{t\to 0+}\|u-u_0\|_{L^2(K)}=0$;
  \item $u$ is suitable, that is, for any nonnegative function $\phi\in
      \mathcal{D}(\RR^3\times \RR^+)$, $u$ satisfies the following local energy
      inequality
      \begin{equation}\label{eq.loc}
      \begin{split}
       2\int^t_0\int_{\RR^3} \psi\big|\Lambda^s u\big|^2\,\mathrm{d}x\mathrm{d}\tau\leq
       &\int^t_0\int_{\RR^3}\big|u\big|^2\partial_t\psi\,\mathrm{d}x\mathrm{d}\tau
       -2\int^t_0\int_{\RR^3} [\tilde{\psi},\Lambda^{s}]\Lambda^s u\cdot
       u\psi\,\mathrm{d}x\mathrm{d}\tau\\
       &-2\int^t_0\int_{\RR^3}
       [\Lambda^s,\psi]u\cdot(\tilde{\psi}\Lambda^su)\,\mathrm{d}x\mathrm{d}\tau\\
       &+\int^t_0\int_{\RR^3}|u|^2
       u\cdot\nabla\psi\,\mathrm{d}x\mathrm{d}\tau
       +2\int^t_0\int_{\RR^3}P\cdot u\nabla\psi\,\mathrm{d}x\mathrm{d}\tau
      \end{split}
      \end{equation}
      with $\nabla P=-\nabla \frac{\Div\Div}{\Delta}(u\otimes u)$. Here $\tilde{\psi}\in \mathcal{D}(\RR^3)$ satisfies $\tilde{\psi}\geq 0$ and $\tilde{\psi}\psi=\psi$.
\end{enumerate}
\end{definition}
\begin{remark}\label{rem.1}
The restriction on the pressure $P$ avoids the existence of trival local Leary solution, for example, $u=\rho(t)$ and $P=-\rho'(t)\cdot x$ where $\rho(t)$ is a smooth vector field with $\rho(t)\neq 0$. In addition, for all $x_0\in\RR^3$ and $r>0$, we can decompose $P$ in $B_{r}(x_0)$ as $P_{x_0,r} +P_{x_0,r} (t)$ where
$P_{x_0,r}(t)$ is a function depending only on $x_0,r$ and 
\begin{equation}\label{pess}
\begin{split}
P_{x_0,2}(x,t)=& -\Delta^{-1}\Div\Div (u(t)\otimes
u(t)\psi_{x_0})\\
&-\int_{\RR^3}\big(k(x-y)-k(x_0-y)\big)(u\otimes
u)(y,t)(1-\psi_{x_0}(y))\,dy\\
\triangleq&
P^1_{x_0,2}(x,t)+P^2_{x_0,2}(x,t)
\end{split}
\end{equation}
with $\psi_{x_0}\in\mathcal{D}(B_{4r}(x_0))$ satisfying $0\leq \psi_{x_0}\leq 1$ and $\psi=1$ in $B_{2r}(x_0)$. Here $k(x)$ is the kernel of the Calderon-Zygmund operator $\Delta^{-1}\Div\Div$, satisfying
\begin{equation}\label{eq.press1}
|k(x-y)-k(x_0-y)|\lesssim \frac{|x-x_0|}{|x-y|^4},\quad \text{if}\quad |x-x_0|\leq
\frac12|x-y|.
\end{equation}
\end{remark}
\begin{remark}
The restriction on $s$ ensures that the following term is well defined:
\[\int^t_0\int_{\RR^3}|u|^2u\cdot\nabla\psi\,\mathrm{d}x\mathrm{d}\tau.\]
\end{remark}
By regularization method, we can prove our first main result.
\begin{theorem}\label{TH1}
Let $s\in (3/4,1)$. Then, for each solenoidal vector field $u_0\in L^2_{uloc}$, there exists a $T>0$ and a local Leary
solution to \eqref{NS} starting from $u_0$ on $\RR^3\times (0,T)$.
\end{theorem}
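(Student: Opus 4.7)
The plan is to mirror the Lemari\'e-Rieusset construction for the classical case $s=1$, replacing each occurrence of the Laplacian-cutoff commutator $[\Delta,\psi]$ by its fractional analogue $[\Lambda^s,\psi]$. I would begin by regularizing the initial datum: pick a smooth cutoff $\chi_R$ supported in $B_{2R}$ and set $u_0^R=\mathbb{P}(\chi_R u_0)\in L^2$, where $\mathbb{P}$ is the Leray projector; then $u_0^R\to u_0$ in $L^2_{loc}$. For each $R$, by a Galerkin or mollification scheme combined with the existence of suitable weak solutions (Tang-Yu \cite{TY15}), one obtains a global weak solution $u^R$ that satisfies both the global energy inequality \eqref{G-ene} and the local energy inequality \eqref{eq.loc} with $\tilde\psi\psi=\psi$.

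The crux is to derive uniform bounds on $u^R$ in $L^\infty_t L^2_{uloc}\cap L^2_t\dot H^s_{uloc}$ on a time interval $[0,T]$ whose length depends only on $\|u_0\|_{L^2_{uloc}}$. I would substitute $\psi=\phi_{x_0}^2\chi(t)$ into \eqref{eq.loc} for each $x_0\in\RR^3$ and control every right-hand term by norms involving $\|u\|_{L^2_{uloc}}$ and $\|\Lambda^s u\|_{L^2_{uloc}}$. The two commutator terms, which replace the classical $\Delta\psi$ contribution, are handled by viewing $[\Lambda^s,\tilde\psi]$ as a pseudodifferential operator of order $s-1$: a Kato-Ponce/Coifman-Meyer estimate yields $\|[\Lambda^s,\tilde\psi]v\|_{L^2}\lesssim C(\psi)\|v\|_{L^2}$, so by Cauchy-Schwarz these terms are absorbable into the dissipation modulo a contribution of size $\|u\|_{L^2_{uloc}}^2$. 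The pressure is handled through the decomposition of Remark \ref{rem.1}: the local piece $P^1_{x_0,2}$ is bounded by Calder\'on-Zygmund by $\|u\|_{L^2(B_8(x_0))}^2$, while the far-field piece $P^2_{x_0,2}$ is estimated by dyadically decomposing $\RR^3\setminus B_{4}(x_0)$ and invoking the kernel decay \eqref{eq.press1}, producing a convergent geometric sum controlled by $\|u\|_{L^2_{uloc}}^2$. The cubic convection term $|u|^2u\cdot\nabla\psi$ is bounded via the Sobolev embedding $\dot H^s\hookrightarrow L^{6/(3-2s)}$ interpolated with $L^\infty_t L^2$ to obtain local $L^3_{t,x}$ control; this interpolation is subcritical exactly when $s>3/4$, which is the structural reason for the assumption. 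Summing and taking the supremum over $x_0$ yields a closed inequality $\alpha'(t)+(\text{dissipation})\le C(1+\alpha(t)^\gamma)$ for $\alpha(t)=\|u^R(t)\|_{L^2_{uloc}}^2$, producing a uniform-in-$R$ lifespan $T=T(\|u_0\|_{L^2_{uloc}})$.

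For the passage to the limit, the uniform bounds together with an equation-based estimate on $\partial_t u^R$ in a negative-order local space permit an Aubin-Lions extraction, yielding a subsequence converging strongly in $L^2_{loc}([0,T]\times\RR^3)$ to some $u$; weak-$\ast$ convergence delivers conditions (i) and (ii) of Definition \ref{def.ller}, strong local convergence handles both the cubic convection and the commutator terms in \eqref{eq.loc}, lower semicontinuity preserves the dissipation, and (iii) follows by propagating the approximate local energy identity as $t\to 0^+$. The principal obstacle lies in the uniform estimate of Step 2: unlike $[\Delta,\psi]$, the commutator $[\Lambda^s,\tilde\psi]$ has a nonlocal kernel, so its value at $x\in B_1(x_0)$ depends on $u$ throughout $\RR^3$, and only a careful decomposition exploiting the decay of the kernel of $\Lambda^s$ recovers a localized $L^2_{uloc}$ bound. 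Precisely the same issue arises for the far-field pressure $P^2_{x_0,r}$, and closing both estimates simultaneously against the dissipation while keeping room for the subcritical exponent from $s>3/4$ is the delicate point of the argument.
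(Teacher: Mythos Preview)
Your approach is correct and parallels the paper's in its essential structure: both derive a uniform-in-approximation $L^2_{uloc}$ a priori estimate from the local energy inequality by controlling the commutator, pressure, and convection terms exactly as you describe, then pass to the limit via Aubin--Lions. The difference lies in the regularization. The paper does not truncate $u_0$; instead it keeps $u_0\in L^2_{uloc}$ and mollifies the nonlinearity, replacing $u\cdot\nabla u$ by $J_\varepsilon u_\varepsilon\cdot\nabla u_\varepsilon$, and solves the resulting system by a Banach fixed-point argument directly in $L^\infty_t L^2_{uloc}\cap L^2_t\dot H^s_{uloc}$ on a short $\varepsilon$-dependent interval (Step~1 of Section~\ref{sec.3}). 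This sidesteps two technical points in your route: first, you must check that $\|\mathbb{P}(\chi_R u_0)\|_{L^2_{uloc}}$ is uniformly bounded in $R$, which is not automatic since the Leray projector is not bounded on $L^2_{uloc}$ (a Bogovskii-type correction of $\nabla\chi_R\cdot u_0$ would be needed); second, if you invoke Tang--Yu's existence you obtain their harmonic-extension form of the local energy inequality rather than the commutator form \eqref{eq.loc}, and the equivalence is established in the paper only after Theorem~\ref{TH1-1}. Mollifying the nonlinearity avoids both issues and yields \eqref{eq.loc} directly at the approximate level. Otherwise the two arguments coincide: your commutator bound is precisely Lemma~\ref{lem.com}, your pressure treatment matches Remark~\ref{rem.1} and Lemma~\ref{Lem.3-1}, and the closed inequality you sketch is the paper's \eqref{est.6}, handled there by a continuity argument rather than as an ODE.
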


Due to the last two terms of the right side of \eqref{eq.loc}, we can't
directly get the global-in-time existence of local Leray solution $u$. Inspired by \cite{LR02}, we can construct a local Leray solution on $(0,\infty)$ via $\varepsilon$-regularity theory in \cite{TY15} and ``weak-strong" uniqueness theorem. To realize it, we first show the following additional regularities of local Leray solutions with initial data in $E^2$.
\begin{theorem}\label{TH1-1}
If $u$ is a local Leray solution to \eqref{NS} on $\RR^3\times (0,T)$ starting from
$u_0\in E^2(\RR^3)$ with $\Div u_0=0$, then $u\in L^{\infty}([0,T'];E^2)$ and $\Lambda^s u\in G^{2,2}(T')$ for any $T'\in(0,T)$ and
 \begin{equation}\label{est-1-1}
 \lim_{s\to t+}\|u(s)-u(t)\|_{L^2_{uloc}}=0\quad\text{for }t=0\,\,\text{or almost every }t\in (0,T).
 \end{equation}
\end{theorem}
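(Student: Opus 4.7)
The approach mirrors Lemari\'{e}-Rieusset's strategy for the classical case $s=1$: apply the local energy inequality \eqref{eq.loc} with test function $\psi(x,\tau)=\phi_{x_0}(x)\chi_\delta(\tau)$, where $\phi_{x_0}$ is the spatial cutoff introduced in the preamble and $\chi_\delta\in\mathcal{D}((0,\infty))$ approximates $\mathbf{1}_{[0,t]}$. Passing to the limit $\delta\to 0^+$ and using condition (iii) of Definition~\ref{def.ller}, I expect to reach the usable form
\[
\int_{\RR^3}|u(t)|^2\phi_{x_0}\,dx+2\int_0^t\!\!\int_{\RR^3}\phi_{x_0}|\Lambda^s u|^2\,dx\,d\tau\leq \int_{\RR^3}|u_0|^2\phi_{x_0}\,dx+I_{\mathrm{comm}}+I_{\mathrm{nl}}+I_{\mathrm{pr}},
\]
where the three remainders collect the commutator, cubic, and pressure contributions. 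The task is then to show that each piece vanishes uniformly in $t\in[0,T']$ as $|x_0|\to\infty$, driven by the $E^2$ decay of $u_0$.

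Set $\alpha(t,x_0):=\sup_{\tau\leq t}\|u(\tau)\|_{L^2(B_2(x_0))}^2+\int_0^t\|\Lambda^s u\|_{L^2(B_2(x_0))}^2\,d\tau$; this is bounded uniformly by condition (i). The initial term is $o(1)$ as $|x_0|\to\infty$ because $u_0\in E^2$. The nonlinear term is localised since $\mathrm{supp}\,\nabla\phi_{x_0}\subset B_2(x_0)\setminus B_1(x_0)$, and Gagliardo--Nirenberg together with the Sobolev embedding $H^s\hookrightarrow L^{6/(3-2s)}$ (valid for $s>3/4$) bounds $|I_{\mathrm{nl}}|$ by $C(\|u\|_{L^\infty_t L^2_{uloc}})\,\alpha(t,x_0)^\beta$ with some $\beta>1$. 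For the pressure I use the decomposition \eqref{pess}: $P^1_{x_0,2}$ is controlled by Calder\'on--Zygmund bounds applied to $(u\otimes u)\psi_{x_0}$, hence again by $\alpha(t,x_0)$; $P^2_{x_0,2}$ is handled via the kernel-difference estimate \eqref{eq.press1} and a dyadic splitting of $\RR^3\setminus B_{4}(x_0)$, producing a series of the form $\sum_{k\geq 2}2^{-k}\,\alpha(t,y_k)$ over representative centres $y_k$ in each annulus, small for $|x_0|$ large. The commutators $[\Lambda^s,\phi_{x_0}]$ and $[\tilde\phi_{x_0},\Lambda^s]$ are treated in the same spirit: a near-field piece dominated by local $L^2$ norms, plus a far-field tail controlled by the pointwise Riesz-kernel decay $|x-y|^{-3-s}$ summed dyadically over annuli around $x_0$.

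Combining the estimates and absorbing the superlinear contributions once $\alpha$ is known to be small (using the \emph{a priori} uniform bound from (i)) yields
\[
\alpha(t,x_0)\leq \varepsilon(x_0)+C\int_0^t \alpha(\tau,x_0)\,d\tau,
\]
where $\varepsilon(x_0)\to 0$ as $|x_0|\to\infty$ collects all tail contributions. Gr\"onwall then gives $\alpha(T',x_0)\to 0$, establishing $u\in L^\infty([0,T'];E^2)$ and $\Lambda^s u\in G^{2,2}(T')$. For the right-continuity \eqref{est-1-1} at $t=0$, the same energy inequality yields $\limsup_{t\to 0^+}\int|u(t)|^2\phi_{x_0}\leq \int|u_0|^2\phi_{x_0}$ with $x_0$-uniform remainders, while condition (iii) supplies strong $L^2_{\mathrm{loc}}$ convergence; combined with the uniform $E^2$ tail decay just established, this upgrades to strong $L^2_{uloc}$ convergence. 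For almost every $t\in(0,T)$, Lebesgue's differentiation theorem applied to the monotone ingredients of the energy inequality identifies points of continuity and the same tail argument runs.

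The principal obstacle is the nonlocality of $\Lambda^s$: both commutator terms and the pressure piece $P^2_{x_0,2}$ are genuinely global in $u$, in contrast to the classical case $s=1$ where $[\Delta,\phi_{x_0}]$ sits inside $\mathrm{supp}\,\nabla\phi_{x_0}$. Here only the algebraic Riesz-kernel decay $|x-y|^{-3-s}$ is available, and a careful dyadic splitting is needed to express the global contributions as convergent series whose summands are controlled either by $\|u\|_{L^2_{uloc}}$ (for uniform bookkeeping absorbed into the Gr\"onwall constant) or by $\sup_{|y|>R}\|u(\tau)\|_{L^2(B_1(y))}\to 0$ (which is what ultimately harvests the $E^2$ decay and closes the argument).
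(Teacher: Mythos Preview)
Your overall strategy matches the paper's: extract a pointwise-in-time localized energy inequality from \eqref{eq.loc}, control the three remainder groups (commutators, cubic term, pressure) via dyadic tail estimates, and close with Gr\"onwall. The right-continuity argument you sketch (weak continuity plus the one-sided energy bound, upgraded to $L^2_{uloc}$ via the uniform $E^2$ decay) is essentially what the paper does.

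There is, however, a genuine gap in how you propose to close the Gr\"onwall step. Working with a single center $x_0$ and the bare cutoff $\phi_{x_0}$, the commutator term $\int_0^t\!\int [\tilde\phi_{x_0},\Lambda^s]\Lambda^s u\cdot u\phi_{x_0}$ and the far-field pressure pairing $\int_0^t\!\int P^2_{x_0,2}\,u\cdot\nabla\phi_{x_0}$ are \emph{not} controlled by quantities that vanish as $|x_0|\to\infty$. Lemma~\ref{lem.com} only gives $\|[\tilde\phi_{x_0},\Lambda^s]\Lambda^s u\|_{L^2_{uloc}}\lesssim\|\Lambda^s u\|_{L^2_{uloc}}\leq M_{T'}$, and the pairing with $u\phi_{x_0}$ then yields a term of size $M_{T'}\cdot(\int_0^t\alpha(\tau,x_0)\,d\tau)^{1/2}$; after Young's inequality this leaves an additive constant of order $M_{T'}^2 T'$ that does not depend on $x_0$. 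The same happens for $P^2_{x_0,2}$: your dyadic series $\sum_k 2^{-k}\alpha(t,y_k)$ contains annuli with $2^k\gtrsim|x_0|$ whose centers $y_k$ may lie near the origin, and these contribute $O(1)$ in the $L^2_{uloc}$ norm regardless of $|x_0|$. Consequently the inequality you wrote, $\alpha(t,x_0)\leq\varepsilon(x_0)+C\int_0^t\alpha(\tau,x_0)\,d\tau$, is not actually what the estimates produce: the right-hand side either contains a non-decaying constant, or it couples to $\sup_{|y|>R}\alpha(\tau,y)$ for some $R<|x_0|$, which prevents the loop from closing.

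The paper's remedy (Theorem~\ref{th.decay}) is to insert an additional global cutoff: take $\varphi=\phi_{x_0}\chi_R^2$ with $\chi_R(x)=\chi(x/R)$ vanishing on $B_R$, and track the single scalar $\alpha_R(t)=\sup_{x_0\in\RR^3}\|\chi_R u(t)\|_{L^2(B_1(x_0))}$ (supremum over \emph{all} centers, not just far ones). The point is that after multiplying by $\chi_R$, the commutator splits via $[\Lambda^s,\phi_{x_0}\chi_R^2]=\chi_R[\Lambda^s,\phi_{x_0}]\chi_R+\text{(terms with }[\Lambda^s,\chi_R]\text{)}$, and every piece either carries an explicit factor $R^{-s}$ (from $M_{\chi_R}\sim R^{-s}$ in Lemma~\ref{lem.com}) or lands back on $\alpha_R,\beta_R$ with the \emph{same} $R$. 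Likewise $\chi_R(x)P^2_{x_0,2}(x,t)$ is split at scale $\sqrt{R}$ so that the near part sees $\chi_R(y)|u(y)|^2$ (bounded by $\alpha_R\|u\|_{L^2_{uloc}}$) and the far part picks up $R^{-1/2}$. This yields a genuinely closed inequality
\[
\alpha_R^2(t)+\beta_R^2(t)\leq 2\alpha_R^2(0)+C_{s,T',M_{T'}}R^{-1/2}+C_{s,M_{T'}}\int_0^t\alpha_R^2(\tau)\,d\tau+\text{(absorbable superlinear terms)},
\]
and Gr\"onwall now gives $\alpha_R(t)\lesssim\alpha_R(0)+R^{-1/4}\to0$ as $R\to\infty$. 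Your sketch is one $\chi_R$ away from this; once you introduce it, the rest of what you wrote goes through.
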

With the help of Theorem \ref{TH1-1}, we say that \eqref{eq.loc} is equivalent to the local energy inequality given in \cite{TY15} via the harmonic extension. This implies that the local Leray solution with initial data in $E^2$ is a suitable weak solution defined in Section \ref{sec.4}. For details, see Section \ref{sec.4}. Hence, by Proposition \ref{pro.CKN}, we can prove Proposition \ref{pro.regu}, which says that the local Leray
solution $u$ starting from $u_0\in E^2$ is regular in  $B^c_R$ away from $t=0$ for some large enough $R$ and there exists a $t_0$ closed to $T$ such that $u(t_0)\in \bar{H}^{s}_{uloc}$ and $\lim_{t\to t_0+}\|u(t)-u(t_0)\|_{E^2}=0$. Based on these, invoking the well-posedness theory in Theorem \ref{well-loc} and the weak-strong uniqueness of local Leray solutions in Proposition \ref{pro.uni}, we can prove the following theorem via
the construction method in \cite{LR02}, which gives a positive answer to \textbf{(Q)}.
\begin{theorem}\label{TH2}
Let $s\in [5/6, 1)$. Then, for any solenoidal vector field $u_0\in E^2$, there exists a local Leray solution $u$ to
\eqref{NS}-\eqref{NSI} in $\RR^3\times (0,\infty)$.
\end{theorem}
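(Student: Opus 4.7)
The plan is to mimic Lemarié-Rieusset's construction \cite{LR02} of global-in-time local Leray solutions, substituting the fractional tools developed earlier in the paper for their classical counterparts. Given a solenoidal $u_0\in E^2$, Theorem \ref{TH1} produces a local Leray solution on some $[0,T_1)$. Let $T^*$ denote the supremum of times up to which a local Leray solution emanating from $u_0$ can be constructed. I will argue by contradiction that $T^*=+\infty$.

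Assume toward contradiction that $T^*<\infty$, and let $u$ be the local Leray solution on $[0,T^*)$. Theorem \ref{TH1-1} upgrades the solution on every $[0,T']\subset[0,T^*)$ to satisfy $u\in L^\infty([0,T'];E^2)$, $\Lambda^s u\in G^{2,2}(T')$, and the strong right-continuity \eqref{est-1-1}. Feeding this into Proposition \ref{pro.regu}—whose proof in turn rests on the $\varepsilon$-regularity criterion of Proposition \ref{pro.CKN} combined with the pressure decomposition \eqref{pess}--\eqref{eq.press1}—produces a radius $R>0$ such that $u$ is smooth on $B_R^c\times(0,T^*)$, together with a time $t_0<T^*$, which may be chosen arbitrarily close to $T^*$, at which $u(t_0)\in\bar H^s_{uloc}$ and $\lim_{t\to t_0+}\|u(t)-u(t_0)\|_{E^2}=0$.

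Now I invoke the local well-posedness statement Theorem \ref{well-loc} with the new initial datum $u(t_0)\in\bar H^s_{uloc}$; this yields a unique strong solution $v$ on $[t_0,t_0+\delta]$ with $\delta>0$ depending only on $\|u(t_0)\|_{\bar H^s_{uloc}}$. By the weak-strong uniqueness result Proposition \ref{pro.uni}, the local Leray solution $u$ must coincide with $v$ on $[t_0,\min\{T^*,t_0+\delta\})$. Since $\delta$ is independent of the distance $T^*-t_0$, choosing $t_0$ close enough to $T^*$ so that $t_0+\delta>T^*$ and concatenating $u|_{[0,t_0]}$ with $v|_{[t_0,t_0+\delta]}$ gives a local Leray solution on $[0,t_0+\delta]$, contradicting the maximality of $T^*$. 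Hence $T^*=+\infty$ and the theorem follows.

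The principal obstacle will be verifying that the concatenated object is indeed a local Leray solution in the sense of Definition \ref{def.ller}. Checking items (i)--(iii) is routine once the strong $E^2$-continuity at $t_0$ from Proposition \ref{pro.regu} is in hand; the delicate point is the local energy inequality \eqref{eq.loc} across $t_0$, which requires that the nonlocal commutator terms $[\Lambda^s,\psi]\Lambda^s u$ and $[\tilde\psi,\Lambda^s]\Lambda^s u$ produced by the gluing procedure add up correctly. This is exactly the place where the restriction $s\in[5/6,1)$ is needed: it is the threshold under which the $\varepsilon$-regularity theory of \cite{TY15,RWW16} yields decay of $u$ at spatial infinity strong enough to localize the singular set in a fixed ball $B_R$, and under which the cubic pressure/energy term $|u|^2u\cdot\nabla\psi+2Pu\cdot\nabla\psi$ in \eqref{eq.loc} can be controlled by the dissipation $\|\Lambda^s u\|_{L^2_{t,x}}^2$ through a Sobolev/interpolation estimate.
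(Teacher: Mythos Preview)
Your continuation argument has a genuine gap. You assert that the lifespan $\delta$ produced by Theorem \ref{well-loc} at the new initial time $t_0$ ``depends only on $\|u(t_0)\|_{\bar H^s_{uloc}}$'' and is therefore independent of $T^*-t_0$. But nothing in the a priori control available on $u$---namely $u\in L^\infty([0,T'];E^2)$ and $\Lambda^s u\in L^2([0,T'];E^2)$---bounds $\|u(t_0)\|_{\bar H^{5/2-2s}_{uloc}}$ (or $\|u(t_0)\|_{\bar H^{s}_{uloc}}$) uniformly as $t_0\uparrow T^*$. Proposition \ref{pro.regu} only tells you that $u(t_0)$ lies in $\bar H^{s}_{uloc}$ for almost every $t_0$, not that its norm stays bounded. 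Hence $\delta=\delta(t_0)$ may well shrink to zero as $t_0\to T^*$, and the contradiction never materializes.

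The paper repairs exactly this point by inserting an additional decomposition step (Lemma \ref{p}) together with the perturbed Leray theory (Theorem \ref{TH.pertu}). One writes $u(t_0)=v_0+w_0$ with $\|v_0\|_{\bar H^{5/2-2s}_{uloc}}<\eta$ small enough that Theorem \ref{well-loc} yields a strong solution $v$ on a \emph{fixed} interval $[t_0,t_0+1]$, while $w_0\in L^2$ is handled by the global-in-time Leray theory for the perturbed system \eqref{PFNS} around $v$. The sum $Y=v+w$ is then a local Leray solution on $[t_0,t_0+1]$; weak--strong uniqueness (Proposition \ref{pro.uni}) against the intermediate strong solution $X$ glues $Y$ to $u$ on $[0,t_0]$. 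This guarantees a uniform step size (at least $3/4$ after accounting for the choice of $t_0$) and the induction goes through. Your outline is missing this decomposition/perturbation mechanism entirely.

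A secondary point: your explanation of the threshold $s\ge 5/6$ is off. The $\varepsilon$-regularity of \cite{TY15,RWW16} already covers $s\in(3/4,1)$; the restriction $s\ge 5/6$ enters because the regularity $u(t_0)\in\bar H^{s}_{uloc}$ delivered by Proposition \ref{pro.regu} must embed into the critical space $\bar H^{5/2-2s}_{uloc}$ required by Theorem \ref{well-loc}, i.e.\ one needs $s\ge 5/2-2s$.
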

\begin{remark}
Since $\bar{H}^s_{uloc}\hookrightarrow \bar{H}^{\alpha}_{uloc}$ for all $0\leq \alpha\leq s$, to obtain a solution in $\bar{H}^{\alpha}_{uloc}$ for some $\alpha$ which ensures that the uniqueness in Proposition \ref{pro.uni} holds, we need $s\geq 5/2-2s$, where $5/2-2s$ is the critical index. This gives that $s\geq 5/6$.
\end{remark}
\begin{remark}
From the above analysis, we have that for each local Leray solution $u$ to \eqref{NS}-\eqref{NSI} on $\RR^3\times (0,T)$ with $u_0$ in $E^2$, it is  regular in $B^c_R$ away from $t=0$ for some large enough $R>0$.
\end{remark}
\begin{remark}
Given $\lambda>0$, set
\[u_{\lambda}(x,t)=\lambda^{2s-1}u(\lambda x,\lambda^{2s}t),\quad P_{\lambda}(x,t)=\lambda^{4s-2}P(\lambda x,\lambda^{2s}t).\]
We call a solution on $\RR^3\times (0,\infty)$ is a forward self-similar solution if and only if $u(x,t)=u_{\lambda}(x,t)$ and $P(x,t)=P_{\lambda}(x,t)$ for all $\lambda>0$. The existence of forward self-similar solutions to \eqref{NS}-\eqref{NSI} was firstly studied in the case $s=1$, see e.g. \cite{Can95,CP96}. Lately, Lai, Miao and Zheng in \cite{LMZ17} proved the existence of forward self-similar solutions to \eqref{NS}-\eqref{NSI} with $5/6<s\leq 1$ for arbitrary large self-similar initial data via the blow-up argument. Since $E^2$ contains non-trivial scale-invariant functions, for example: $\frac{\sigma(x)}{|x|^{2s-1}}$, inspired by \cite{JS14}, we think that, with the help of the global-in-time existence of local Leray solutions in Theorem \ref{TH2}, the result given in \cite{LMZ17} may be proved in another way.
\end{remark}
The rest of this paper is structured as follows. In Section \ref{sec.2}, we give some key foundation estimates, which will be used repeatedly in following sections. In Section \ref{sec.3}, by
regularization method, we prove Theorem \ref{TH1}. Section \ref{sec.4}
is devoted to developing decay and regularity properties
of local Leray solutions to \eqref{NS}-\eqref{NSI} with initial data in $E^2$.
In Section \ref{sec.5}, we complect the proof of Theorem \ref{TH2} via Theorem \ref{TH1} and the
properties given in Section 4. Finally, in Appendix, for the convenience of readers, we prove some results relevant to FNS, which are
recognized by everyone but not proven before.

\subsection*{Notation}
We denote $C$ as an absolute positive constant. $C_{\lambda, \gamma,\cdots}$ denotes a positive constant depending only on $\lambda, \gamma, \cdots$. We adopt the convention that nonessential constant $C$ may change from line to line. Given two quantities $a$ and $b$, we denote $a\lesssim b$ and $a\lesssim_{\lambda,\gamma,\ldots} b$ as $a\leq C b$ and $a\leq C_{\lambda,\gamma,\cdots}\, b$ respectively. In addition, if $Cb\leq a\leq C^{-1}b$ or $C_{\lambda,\gamma,\ldots}b\leq a\leq C_{\lambda,\gamma,\ldots}^{-1}b$, we say $a\simeq b$ or $a\simeq_{\lambda,\gamma,\ldots} b$. For any $x_0\in\RR^3$ and $t_0\in \RR^+$, $B_R(x_0)$ means a ball in $\RR^3$ with radius $R$ centered at $x_0$, $B^c_{R}(x_0)=\RR^3\setminus B_R(x_0)$ and $Q_R(x_0,t_0)=B_R(x_0)\times (t_0-R^2,t_0)\in \RR^3\times \RR$.

\section{Preliminaries}\label{sec.2}
\setcounter{section}{2}
\setcounter{equation}{0}

In this section, we mainly introduce some important results which will be used in the
following sections.
\subsection{Estimates of Heat kernel and Oseen Kernel of fractional Laplacian}
In this subsection, we consider the following Cauchy problem for the linear fractional Stokes problem in
$\RR^n\times\RR^+$:
\begin{equation*}
\begin{cases}
\partial_t u+\Lambda^{2s}u+\nabla p=\Div F,\quad \Div u=0,\\
u(x,0)=u_0,
\end{cases}
\end{equation*}
where $F$ is a given second-order tensor filed. Since $\Div u=0$, applying Leray project
operator $\mathbb{P}=Id+\mathcal{R}\otimes \mathcal{R}$ where
$\mathcal{R}\triangleq(\mathcal{R}_1,\ldots,\mathcal{R}_n)$ and $\mathcal{R}_j$ is the Riesz operator, to the above first equation, by Duhamel's formula, we get
that
\begin{align*}
u(x,t)=e^{-t\Lambda^{2s}}u_0+\int^t_0 e^{-(t-\tau)\Lambda^{2s}}\mathbb{P}(\nabla\cdot
F)\,\mathrm{d}\tau=G_t\ast u_0+\int^t_0 \mathcal{O}_{j,k,t-\tau}\ast(\partial_iF_{ik})(\tau)\,\mathrm{d}\tau
\end{align*}
where $G_t(x)$ and $\mathcal{O}_{j,k,t}$ are the kernel functions of $e^{-t\Lambda^{2s}}$
and $e^{-t\Lambda^{2s}}(\delta_{jk}+\mathcal{R}_j\mathcal{R}_k)$, respectively. It's obvious that
\[\Lambda^{\alpha}G_t(x)=t^{-\frac{\alpha}{2s}}t^{-\frac{n}{2s}}(\Lambda^{\alpha}G_1)
(\tfrac{x}{t^{1/(2s)}}),\quad
\Lambda^{\alpha}\mathcal{O}_{j,k,t}(x)=t^{-\frac{\alpha}{2s}}t^{-\frac{n}{2s}}
(\Lambda^{\alpha}\mathcal{O}_{j,k,1})(\tfrac{x}{t^{1/(2s)}})\]
and $\int_{\RR^n}G_t(x)\mathrm{d}x=1$. In addition, $G_t$ and $\mathcal{O}_{j,k,t}$ satisfy
the following point-wise and $L^p-L^q$ estimates.
\begin{lemma}\label{lem.G_t}
Let $s>0$. Then, there exists a absolute constant $C$ such
that for all $x\in \RR^n$
\begin{equation}\label{G_t-p}
|G_1(x)|\leq C(1+|x|)^{-n-2s},\quad|\Lambda^{\alpha}G_1|\leq
C(1+|x|)^{-n-\alpha}\,(\alpha>0).
\end{equation}
Moreover,  we have that for all $0\leq \alpha_2\leq \alpha_2$ 
\begin{align}
&\|\Lambda^{\alpha_1}G_t\ast f\|_{L^p(\RR^n)}\leq C_{s,\alpha,p,r}t^{-\frac{\alpha_1-\alpha_2}{2s}-\frac n{2s}(\frac
1r-\frac 1p)}\|\Lambda^{\alpha_2}f\|_{L^r(\RR^n)},\quad 1\leq r\leq p\leq \infty,\label{G_t-Lp'}\\
&\|\Lambda^{\alpha_1}G_t\ast f\|_{L^r_{uloc}(\RR^n)}\leq C_{s,\alpha}t^{-\frac
{\alpha_1-\alpha_2}{2s}}\|\Lambda^{\alpha_2}f\|_{L^r_{uloc}(\RR^n)},\quad1\leq r\leq \infty,\label{G_t-Lp}\\
&\|\Lambda^{\alpha_1}G_t\ast f\|_{L^{\infty}(\RR^n)}\leq C_{s,\alpha}t^{-\frac
{\alpha_1-\alpha_2}{2s}}\max\{1,t^{-\frac3{2ps}}\}\|\Lambda^{\alpha_2}f\|_{L^r_{uloc}(\RR^n)},\quad 1\leq r< \infty.\label{G_t-Lp''}
\end{align}
\end{lemma}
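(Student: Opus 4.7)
The plan is to first derive the pointwise bounds in \eqref{G_t-p} and then pass to the three $L^p$-type estimates through scaling, Young's inequality, and a partition-of-unity decomposition for the uniformly local norms. For the pointwise bound on $G_1$, I would start from the Fourier representation $G_1(x)=(2\pi)^{-n}\int_{\RR^n} e^{ix\cdot\xi}e^{-|\xi|^{2s}}\,\mathrm{d}\xi$, which is manifestly bounded since $e^{-|\xi|^{2s}}\in L^1(\RR^n)$. For $|x|\geq 1$ I would introduce a Littlewood--Paley partition $1=\chi_0(\xi)+\sum_{j\geq 1}\chi_j(\xi)$ with $\mathrm{supp}\,\chi_j\subset\{|\xi|\sim 2^j\}$, integrate by parts $N$ times against $e^{ix\cdot\xi}$ on each dyadic piece, and sum the contributions using the rapid decay of $e^{-|\xi|^{2s}}$ at high frequencies to obtain the $(1+|x|)^{-n-2s}$ bound. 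The estimate for $\Lambda^\alpha G_1$ follows by the same scheme applied to the amplitude $|\xi|^\alpha e^{-|\xi|^{2s}}$; alternatively, for $s\in(0,1)$ one may invoke the subordination representation $e^{-t|\xi|^{2s}}=\int_0^\infty e^{-u|\xi|^2}\mu_{s,t}(\mathrm{d}u)$ together with the tail $\mu_{s,1}(u)\sim u^{-1-s}$ at infinity to reduce matters to weighted Gaussian estimates.

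The scaling identity $\Lambda^\alpha G_t(x)=t^{-(\alpha+n)/(2s)}(\Lambda^\alpha G_1)(x/t^{1/(2s)})$ already recorded in the excerpt, combined with the pointwise bound, yields $\|\Lambda^\alpha G_t\|_{L^q(\RR^n)}\lesssim t^{-\alpha/(2s)-(n/(2s))(1-1/q)}$ for every $1\leq q\leq\infty$. Writing $\Lambda^{\alpha_1}G_t\ast f=\Lambda^{\alpha_1-\alpha_2}G_t\ast\Lambda^{\alpha_2}f$ (a Fourier-multiplier identity) and applying Young's convolution inequality with $1+1/p=1/q+1/r$ then produces \eqref{G_t-Lp'}.

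For the uniformly local bound \eqref{G_t-Lp}, fix $x_0\in\RR^n$ and take a partition of unity $\{\eta_k\}$ subordinate to a bounded-overlap cover of $\RR^n$ by unit balls $\{B_1(z_k)\}$. Decomposing $\Lambda^{\alpha_2}f=\sum_k(\Lambda^{\alpha_2}f)\eta_k$ and applying Young's inequality on each patch gives
\[\|\Lambda^{\alpha_1-\alpha_2}G_t\ast((\Lambda^{\alpha_2}f)\eta_k)\|_{L^r(B_1(x_0))}\lesssim \|\Lambda^{\alpha_1-\alpha_2}G_t\|_{L^1(A_k)}\,\|\Lambda^{\alpha_2}f\|_{L^r(B_1(z_k))},\]
where $A_k=B_2(x_0-z_k)$. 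The pointwise decay from the first step, together with the bounded-overlap property $\sum_k\mathbf{1}_{A_k}\lesssim 1$, gives $\sum_k\|\Lambda^{\alpha_1-\alpha_2}G_t\|_{L^1(A_k)}\lesssim\|\Lambda^{\alpha_1-\alpha_2}G_t\|_{L^1(\RR^n)}\lesssim t^{-(\alpha_1-\alpha_2)/(2s)}$; summing over $k$ closes the estimate since $\|\Lambda^{\alpha_2}f\|_{L^r(B_1(z_k))}\leq\|\Lambda^{\alpha_2}f\|_{L^r_{uloc}}$ uniformly in $k$. The bound \eqref{G_t-Lp''} is obtained from the same decomposition but with H\"older's inequality using the $L^{r'}$-norm of $\Lambda^{\alpha_1-\alpha_2}G_t$ on each patch; the factor $\max\{1,t^{-3/(2ps)}\}$ emerges from comparing $t^{1/(2s)}$ with the unit spatial scale in the resulting local $L^{r'}$-norm of the kernel.

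The main technical obstacle is the sharp bound $|\Lambda^\alpha G_1(x)|\lesssim(1+|x|)^{-n-\alpha}$ in \eqref{G_t-p}, because for non-integer $\alpha$ the amplitude $|\xi|^\alpha e^{-|\xi|^{2s}}$ is not smooth at $\xi=0$ and naive repeated integration by parts fails in a neighbourhood of the origin. The cleanest workaround is either to split the frequency integral at $|\xi|\sim 1$ and regularize the singular factor on the low side, or equivalently to route through the subordination formula, which transfers the low-frequency singularity into the integrable tail of the stable density.
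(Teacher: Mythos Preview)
Your proposal is correct and, at the level of ideas, parallel to the paper's argument; the differences are in packaging. The paper does not rederive the pointwise bounds \eqref{G_t-p} or the global estimate \eqref{G_t-Lp'} but simply cites \cite{MYZ08}. For \eqref{G_t-Lp} it invokes the uniformly-local Young inequality $\|K\ast g\|_{L^r_{uloc}}\leq\|K\|_{L^1}\|g\|_{L^r_{uloc}}$ (stated separately as Lemma~\ref{lem.You}) together with $\Lambda^{\alpha_1-\alpha_2}G_t\in L^1$; your partition-of-unity computation is precisely an inline proof of that inequality, so the content is identical. For \eqref{G_t-Lp''} the paper also works patch-by-patch but organizes the estimate differently: after the rescaling $y\mapsto t^{1/(2s)}y$ it uses the summability $\sum_{k\in\ZZ^n}\sup_{B_1(k)}|\Lambda^{\alpha}G_1|<\infty$ and bounds $\int_{B_1(k)}|g(x-t^{1/(2s)}y)|\,\mathrm{d}y$ by a supremum of $L^1$-norms over balls of radius $t^{1/(2s)}$, from which the factor $\max\{1,t^{-n/(2sr)}\}$ drops out immediately via H\"older on a single ball. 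Your version, which sums $L^{r'}$-norms of the kernel over the patches, also works but is slightly less direct, since $\sum_k\|K\|_{L^{r'}(A_k)}$ does not collapse to $\|K\|_{L^{r'}}$ when $r'>1$; one has to observe that for $t<1$ the sum is dominated by the single patch containing the origin, while for $t\geq1$ the kernel is slowly varying and the sum is comparable to $\|K\|_{L^1}$. Both routes arrive at the same place.
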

To prove it, we use the following estimate:
\begin{lemma}\cite{Laz13,LR16}\label{lem.You} Let $1\leq p<\infty$ and $\alpha>0$. Then we have that
\begin{align}
&\|f\ast g\|_{L^p_{uloc}(\RR^n)}\leq \|f\|_{L^1(\RR^n)}\|g\|_{L^p_{uloc}(\RR^n)},\label{eq.you1}\\
&\|f\ast g\|_{L^{\infty}(\RR^n)}\leq
C_{\alpha}\|(1+|x|)^{\frac{n+\alpha}2}f\|_{L^2(\RR^n)}\|g\|_{L^2_{uloc}(\RR^n)}.\label{eq.you2}
\end{align}
\end{lemma}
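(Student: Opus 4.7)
The plan is to handle the two inequalities separately. The first inequality \eqref{eq.you1} is the natural uloc analogue of Young's convolution inequality and reduces to Minkowski's integral inequality, while the second \eqref{eq.you2} requires a lattice decomposition combined with a weighted Cauchy--Schwarz estimate.

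For \eqref{eq.you1}, I would fix $x_0\in\RR^n$, write $(f\ast g)(x)=\int_{\RR^n} f(y)\,g(x-y)\,dy$, and apply Minkowski's integral inequality in $L^p(B_1(x_0))$ to get
\[
\|f\ast g\|_{L^p(B_1(x_0))}\le \int_{\RR^n}|f(y)|\,\|g(\cdot-y)\|_{L^p(B_1(x_0))}\,dy=\int_{\RR^n}|f(y)|\,\|g\|_{L^p(B_1(x_0-y))}\,dy.
\]
Bounding the inner norm by $\|g\|_{L^p_{uloc}(\RR^n)}$ uniformly in $y$ and taking the supremum over $x_0\in\RR^n$ yields \eqref{eq.you1}.

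For \eqref{eq.you2}, I would fix $x\in\RR^n$ and apply Cauchy--Schwarz with the weight $(1+|x-y|)^{n+\alpha}$:
\[
|(f\ast g)(x)|^2\le\Big(\int_{\RR^n}|f(x-y)|^2(1+|x-y|)^{n+\alpha}\,dy\Big)\Big(\int_{\RR^n}|g(y)|^2(1+|x-y|)^{-(n+\alpha)}\,dy\Big).
\]
A change of variable in the first factor identifies it with $\|(1+|z|)^{(n+\alpha)/2}f\|_{L^2(\RR^n)}^2$. For the second factor, I cover $\RR^n$ by unit balls $\{B_1(x_k)\}_{k\in\ZZ^n}$ with bounded overlap; since $(1+|x-y|)\simeq(1+|x-x_k|)$ for $y\in B_1(x_k)$, one obtains
\[
\int_{\RR^n}|g(y)|^2(1+|x-y|)^{-(n+\alpha)}\,dy\lesssim \sum_{k\in\ZZ^n}(1+|x-x_k|)^{-(n+\alpha)}\|g\|_{L^2(B_1(x_k))}^2\le \|g\|_{L^2_{uloc}(\RR^n)}^2\sum_{k\in\ZZ^n}(1+|x-x_k|)^{-(n+\alpha)}.
\]
Because $\alpha>0$ the exponent exceeds $n$, so a comparison of the discrete sum with the integral $\int_{\RR^n}(1+|\xi|)^{-(n+\alpha)}\,d\xi$ shows that it is finite and uniformly bounded in $x$. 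Taking the supremum in $x$ then delivers \eqref{eq.you2}.

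The main subtlety lies in choosing the exchange weight in the Cauchy--Schwarz step: it must grow fast enough that summing the $g$-integral over the lattice converges (this is precisely where the assumption $\alpha>0$ enters), yet coincide with a norm of $f$ exactly matching the weighted hypothesis $(1+|x|)^{(n+\alpha)/2}f\in L^2(\RR^n)$. No deeper harmonic-analysis tool is required beyond a bounded-overlap cover of $\RR^n$ and the convergence of a weighted lattice sum, so both inequalities are essentially elementary once the decomposition is chosen correctly.
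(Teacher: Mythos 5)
The paper does not supply its own proof of this lemma but simply cites \cite{Laz13,LR16}, so there is no in-text argument to compare against. Your proof is correct and follows the standard route one finds in those references: Minkowski's integral inequality for the uniformly-local Young inequality \eqref{eq.you1}, and weighted Cauchy--Schwarz together with a bounded-overlap lattice decomposition and the convergence of $\sum_k(1+|x-x_k|)^{-(n+\alpha)}$ for \eqref{eq.you2}; the assumption $\alpha>0$ is used exactly where you say it is. The only slight imprecision is that unit balls centered at $\ZZ^n$ do not cover $\RR^n$ once $n\ge 4$; to keep the statement fully general you should either take the finer lattice $\tfrac{1}{\sqrt{n}}\ZZ^n$ or enlarge the ball radius to $\sqrt{n}$ and then shrink back using the elementary comparison of $L^2_{uloc}$ norms on balls of comparable radius. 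In the paper itself $n=3$, so $\sqrt{3}/2<1$ and your choice already covers; this is a cosmetic remark and does not affect correctness in context.
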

\begin{proof}[Proof of Lemma \ref{lem.G_t}]
Since \eqref{G_t-p} and \eqref{G_t-Lp'} have been proved in \cite{MYZ08}, we only need to
prove \eqref{G_t-Lp}. 

From \eqref{G_t-p}, we easily see that $\Lambda^{\alpha}G_1\in
L^1(\RR^n)$ for all $\alpha\geq 0$. Hence, by \eqref{eq.you1} in Lemma \ref{lem.You}, we
deduce that
\begin{equation*}
\begin{split}
\|\Lambda^{\alpha_1}G_t\ast f\|_{L^r_{uloc}(\RR^n)}=&\|\Lambda^{\alpha_1-\alpha_2}G_t\ast \Lambda^{\alpha_2}f\|_{L^r_{uloc}(\RR^n)}\leq
\|\Lambda^{\alpha_1-\alpha_2}G_t\|_{L^1(\RR^n)}\|\Lambda^{\alpha_2}f\|_{L^r_{uloc}(\RR^n)}\\
\leq& C_{s,\alpha}t^{-\frac{\alpha_1-\alpha_2}{2s}}\|\Lambda^{\alpha_2}f\|_{L^r_{uloc}(\RR^n)},\quad 0\leq \alpha_2\leq \alpha_1.
\end{split}
\end{equation*}
This implies \eqref{G_t-Lp}.
On the other hand, in view of \eqref{G_t-p}, we have that
\begin{align*}
\sum_{x_0\in \RR^3}\sup_{x\in B_1(x_0)}|\Lambda^{\alpha}G_1(x)|\leq C_{s,\alpha},\quad \alpha\geq 0.
\end{align*}
Hence, for all $x\in \RR^3$ and $t>0$, we get that
\begin{equation*}
\begin{split}
|[\Lambda^{\alpha_1}G_t\ast f](x)|=&t^{-\frac{\alpha_1-\alpha_2}{2s}}\Big|\int_{\RR^3}\Lambda^{\alpha_1-\alpha_2}G_1(y)
f(x-t^{1/(2s)}y)\,\mathrm{d}y\Big|\\
\leq &t^{-\frac{\alpha_1-\alpha_2}{2s}}\sum_{x_0\in \RR^3}\sup_{y\in B_1(x_0)}|\Lambda^{\alpha_1-\alpha_2}G_1(y)|\int_{y\in B_1(x_0)}|f(x-t^{1/(2s)}y)|\,\mathrm{d}y\\
\leq &C_{s,\alpha}t^{-\frac{\alpha_1-\alpha_2}{2s}}\sup_{z_0\in\RR^3}\int_{|z-z_0|\leq t^{1/2s}}|f(z)|\,\mathrm{d}y\\
\leq & C_{s,\alpha}t^{-\frac{\alpha_1-\alpha_2}{2s}}\max\{1,t^{-\frac{n}{2sr}}\}\|f\|_{L^p_{uloc}(\RR^n)}
\end{split}
\end{equation*}
which implies \eqref{G_t-Lp''}. Then we complete the proof of Lemma \ref{lem.G_t}.
\end{proof}
\begin{lemma}\label{lem.O_t}
Let $s>0$, then there exists a constant $C>0$ such that for all $x\in \RR^n$,
\begin{equation}\label{O_t-p}
|\Lambda^{\alpha}\mathcal{O}_{j,k,1}(x)|\leq C (1+|x|)^{-n-\alpha},\,\,\alpha\geq 0.
\end{equation}
In addition, if $f\in L^r(\RR^n)$ or
$L^r_{uloc}(\RR^n)$, then we have that
\begin{equation}\label{O_t-Lp'}
\|\Lambda^{\alpha_1}\mathcal{O}_{j,k,t}\ast f\|_{L^p(\RR^n)}\leq C_{\alpha_1,\alpha_2,p,r}t^{-\frac {\alpha_1-\alpha_2}{2s}-\frac
n{2s}(\frac 1r-\frac 1p)}\|\Lambda^{\alpha_2}f\|_{L^r(\RR^n)},\quad 1\leq r\leq p\leq \infty,
\end{equation}
for all $0\leq \alpha_2\leq \alpha_1$ expect $\alpha_1=\alpha_2=0$ when $r=1$ or $r=\infty$, and
\begin{align}
&\|\Lambda^{\alpha_1}\mathcal{O}_{j,k,t}\ast f\|_{L^r_{uloc}(\RR^n)}\leq C_{\alpha_1,\alpha_2}t^{-\frac{\alpha_1-\alpha_2}
{2s}}\|f\|_{L^r_{uloc}(\RR^n)}, \quad 1\leq r\leq \infty,\label{O_t-Lp}\\
&\|\Lambda^{\alpha_1}\mathcal{O}_{j,k,t}\ast f\|_{L^{\infty}(\RR^n)}\leq C_{\alpha_1,\alpha_2}t^{-\frac{\alpha_1-\alpha_2}
{2s}}\max\{1,t^{-\frac3{2pr}}\}\|f\|_{L^r_{uloc}(\RR^n)}, \quad 1\leq r<\infty\label{O_t-Lp''}
\end{align}
for all $0\leq \alpha_2<\alpha_1$
\end{lemma}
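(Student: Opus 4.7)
The plan is to mirror the organization of the proof of Lemma~\ref{lem.G_t}: establish the pointwise bound \eqref{O_t-p} first, and then read off the convolution estimates \eqref{O_t-Lp'}--\eqref{O_t-Lp''} by combining Young-type inequalities with the scaling identity
\[
\Lambda^{\alpha}\mathcal{O}_{j,k,t}(x)=t^{-\alpha/(2s)-n/(2s)}(\Lambda^{\alpha}\mathcal{O}_{j,k,1})(x/t^{1/(2s)})
\]
stated just before the lemma.

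For \eqref{O_t-p}, I would work in Fourier variables with
\[
\widehat{\Lambda^\alpha \mathcal{O}_{j,k,1}}(\xi)=|\xi|^\alpha e^{-|\xi|^{2s}}\bigl(\delta_{jk}-\xi_j\xi_k/|\xi|^2\bigr).
\]
Estimating the inverse Fourier transform by brute force for $|x|\leq 1$, and by dyadic frequency decomposition followed by repeated integration by parts for $|x|\geq 1$, yields the claimed $(1+|x|)^{-n-\alpha}$ decay. This is exactly the scheme carried out for $G_1$ in \cite{MYZ08}; the only new feature is that I first excise a fixed neighbourhood of $\xi=0$ so that integration by parts does not hit the non-smoothness of $\xi_j\xi_k/|\xi|^2$ at the origin. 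The excised low-frequency piece is bounded trivially using the boundedness of the Leray symbol and the compact support of the cutoff, while on the high-frequency part the smoothness of $\delta_{jk}-\xi_j\xi_k/|\xi|^2$ is sufficient to carry the iteration through.

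Granted \eqref{O_t-p}, estimate \eqref{O_t-Lp'} drops out of the scaling identity together with Young's convolution inequality: with $\alpha=\alpha_1-\alpha_2$, the kernel $\Lambda^\alpha \mathcal{O}_{j,k,1}$ belongs to $L^\rho(\RR^n)$ for every $\rho\in[1,\infty]$ when $\alpha>0$, and for every $\rho\in(1,\infty)$ when $\alpha=0$. The endpoint failure $\rho\in\{1,\infty\}$ at $\alpha=0$ is precisely the Riesz-transform endpoint, matching the exception ``$\alpha_1=\alpha_2=0$, $r\in\{1,\infty\}$'' in the statement. Young's inequality with $1+1/p=1/\rho+1/r$ then yields \eqref{O_t-Lp'}. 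For \eqref{O_t-Lp} in the regime $\alpha_1>\alpha_2$ the kernel is in $L^1$, so \eqref{eq.you1} of Lemma~\ref{lem.You} applied to $\Lambda^{\alpha_1-\alpha_2}\mathcal{O}_{j,k,t}\ast\Lambda^{\alpha_2}f$ gives the bound. For \eqref{O_t-Lp''}, I would decompose the convolution over unit balls exactly as in the proof of Lemma~\ref{lem.G_t}, exploiting the uniform summability
\[
\sum_{x_0\in\RR^n}\sup_{y\in B_1(x_0)}\bigl|\Lambda^{\alpha_1-\alpha_2}\mathcal{O}_{j,k,1}(y)\bigr|\lesssim 1,
\]
which follows from \eqref{O_t-p} since $\alpha_1>\alpha_2$, to produce the $\max\{1,t^{-3/(2sr)}\}$ factor.

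The main obstacle is the pointwise bound \eqref{O_t-p}: although morally identical to the heat-kernel case, the factor $\delta_{jk}-\xi_j\xi_k/|\xi|^2$ in the Leray symbol is merely bounded (not smooth) at the origin, so care is needed to ensure that this singular piece does not defeat the integration-by-parts gain; this is precisely the role of the low-frequency cutoff above. Once \eqref{O_t-p} is in hand, the remaining three estimates are essentially carbon copies of the corresponding arguments for $G_t$.
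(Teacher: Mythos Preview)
Your proposal is correct and follows exactly the route the paper intends: the paper's own ``proof'' is the single sentence ``The proof is similar to the proof of Lemma~\ref{lem.G_t}. Here we omit it.'' Your outline reconstructs precisely that parallel argument, and your remark about excising a neighbourhood of $\xi=0$ to handle the non-smoothness of the Leray symbol $\delta_{jk}-\xi_j\xi_k/|\xi|^2$ is the only genuinely new ingredient relative to the $G_t$ case---a point the paper leaves implicit.
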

The proof is similar to the proof of Lemma \ref{lem.G_t}. Here we omit it.
\subsection{Estimates of nonlocal commutator $[\Lambda^s,\varphi]$}
In this part, we introduce a nonlocal commutator $[\Lambda^s,\varphi]$ with $s\in (0,1)$ and $\varphi\in
C^{\infty}(\RR^3)$, $\varphi\neq 0$, which will be used to deal with the nonlocal effect of $\Lambda^s$. It's firstly established in \cite{Laz13} by Lazar.
For the convenience of readers, we prove it here again.
\begin{lemma}\label{lem.com}
Let $\varphi\in C^1(\RR^3)$ be a nonnegative function satifying $\varphi\neq 0$ and $|\nabla \varphi|\neq 0$. Then for all $s\in (0,1)$, we have
\[\|[\Lambda^{s},\varphi]u\|_{L^p_{uloc}(\RR^n)}\leq
C_{s}M_{\varphi}\|u\|_{L^p_{uloc}},\quad\|[\Lambda^{s},\varphi]u\|_{L^{p,p}_{uloc}(0,T,\RR^n)}\leq
C_sM_{\varphi}\|u\|_{L^{p,p}_{uloc}(0,T,\RR^n)}\]
for any $1\leq p\leq\infty$. Here
$$M_{\varphi}\triangleq\|\varphi\|^{1-s}_{L^{\infty}(\RR^n)}\|\nabla \varphi\|^{s}_{L^{\infty}(\RR^n)}.$$
\end{lemma}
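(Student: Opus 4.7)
The plan is to reduce the commutator to a convolution with an integrable radial kernel and then invoke the Young-type inequality \eqref{eq.you1} of Lemma \ref{lem.You}. The starting point will be the pointwise singular integral representation of $\Lambda^s$ for $s\in (0,1)$, namely
\[
\Lambda^s f(x) = c_{n,s}\,\mathrm{p.v.}\!\int_{\RR^n}\frac{f(x)-f(y)}{|x-y|^{n+s}}\,\mathrm{d}y.
\]
A direct algebraic manipulation of $\Lambda^s(\varphi u) - \varphi\Lambda^s u$ will give
\[
[\Lambda^s,\varphi]u(x)=c_{n,s}\int_{\RR^n}\frac{(\varphi(x)-\varphi(y))\,u(y)}{|x-y|^{n+s}}\,\mathrm{d}y,
\]
and the principal value can be dropped because the Lipschitz bound $|\varphi(x)-\varphi(y)|\leq \|\nabla\varphi\|_{L^\infty}|x-y|$ renders the integrand $O(|x-y|^{1-n-s})$ near $y=x$, which is locally integrable \emph{precisely because} $s<1$. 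This is the sole place where the restriction $s<1$ plays an essential role.

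Next I would split the integration at a scale $R>0$ to be chosen later: on $\{|x-y|\leq R\}$ I would use the Lipschitz bound just mentioned, while on $\{|x-y|>R\}$ the crude bound $|\varphi(x)-\varphi(y)|\leq 2\|\varphi\|_{L^\infty}$. This will yield the pointwise majorization
\[
\big|[\Lambda^s,\varphi]u(x)\big|\lesssim \|\nabla\varphi\|_{L^\infty}\,(K^{\mathrm{in}}_R*|u|)(x) + \|\varphi\|_{L^\infty}\,(K^{\mathrm{out}}_R*|u|)(x),
\]
where $K^{\mathrm{in}}_R(z)=|z|^{1-n-s}\mathbf{1}_{|z|\leq R}$ and $K^{\mathrm{out}}_R(z)=|z|^{-n-s}\mathbf{1}_{|z|>R}$. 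A spherical integration will produce $\|K^{\mathrm{in}}_R\|_{L^1}\simeq_s R^{1-s}$ and $\|K^{\mathrm{out}}_R\|_{L^1}\simeq_s R^{-s}$, both finite exactly because $0<s<1$.

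The last step will be to apply \eqref{eq.you1} of Lemma \ref{lem.You} to each convolution separately (the $p=\infty$ case reducing to the trivial $L^1*L^\infty \hookrightarrow L^\infty$ Young inequality), which will give
\[
\|[\Lambda^s,\varphi]u\|_{L^p_{uloc}(\RR^n)}\lesssim_s \bigl(\|\nabla\varphi\|_{L^\infty}R^{1-s} + \|\varphi\|_{L^\infty}R^{-s}\bigr)\|u\|_{L^p_{uloc}(\RR^n)}
\]
for every $1\leq p\leq\infty$, and then to balance the two summands by choosing $R=\|\varphi\|_{L^\infty}/\|\nabla\varphi\|_{L^\infty}$. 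This choice produces exactly $\|\varphi\|_{L^\infty}^{1-s}\|\nabla\varphi\|_{L^\infty}^{s}=M_\varphi$, which is the first inequality of the lemma; the hypotheses $\varphi\neq 0$ and $|\nabla\varphi|\neq 0$ are used only to ensure that this optimizing $R$ is a well-defined positive number. The mixed-norm $L^{p,p}_{uloc}(0,T)$ version will follow by applying the pointwise-in-$t$ bound above and raising to the $p$-th power in $t$, since the kernel split is time-independent and Fubini poses no issue. The only genuine technical point I anticipate is verifying the absolute convergence of the commutator integral at $y=x$, which forces the split between the $1-n-s$ and $-n-s$ kernel exponents; beyond that the argument is a routine kernel-decomposition paired with the $L^p_{uloc}$ Young inequality.
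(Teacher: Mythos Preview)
Your proof is correct and follows essentially the same approach as the paper: both reduce the commutator to a convolution with an $L^1$ kernel built from the bound $|\varphi(x)-\varphi(y)|\lesssim\min\{\|\nabla\varphi\|_{L^\infty}|x-y|,\,\|\varphi\|_{L^\infty}\}$ and then apply the $L^p_{uloc}$ Young inequality \eqref{eq.you1}. The only cosmetic difference is that the paper uses the single $\min$-kernel directly and computes its $L^1$ norm, whereas you split at a scale $R$ and optimize---but the optimal $R=\|\varphi\|_{L^\infty}/\|\nabla\varphi\|_{L^\infty}$ is precisely the crossover point of the $\min$, so the two computations are equivalent.
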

\begin{proof} It is obvious that
\begin{align*}
[\Lambda^{s},\varphi]u=&C_{s}\int_{\RR^n}{\varphi(x)u(x)-\varphi(y)u(y)\over
|x-y|^{n+s}}\,dy-\int_{\RR^n}{\varphi(x)u(x)-\varphi(x)u(y)\over |x-y|^{n+s}}\,dy\\
=&C_{s}\int_{\RR^n}{\varphi(x)u(y)-\varphi(y)u(y)\over |x-y|^{n+s}}\,dy\\
\leq&C_{s}\int_{\RR^n}\frac{\min\{\|\nabla\varphi\|_{L^{\infty}}|x-y|,
\|\varphi\|_{L^{\infty}(\RR^n)}\}}
{|x-y|^{n+s}}|u(y)|\,\mathrm{d}y\\
\triangleq& (K\ast|u|)(x)
\end{align*}
where
$K(x)=C_{s}{\min\{|x|\|\nabla\varphi\|_{L^{\infty}(\RR^n)},\|\varphi\|_{L^{\infty}(\RR^n)}\}\over
|x|^{n+s}}$. By a simple calculation, we have
$$\|K\|_{L^1(\RR^n)}\leq C_{s}\|\varphi\|^{1-s}_{L^{\infty}(\RR^n)}\|\nabla
\varphi\|^{s}_{L^{\infty}(\RR^n)}.$$
Thus, by \eqref{eq.you1} in Lemma \ref{lem.You} and Minkowski's inequality , we have that for any $1\leq p<\infty$
\[\|[\Lambda^{s},\varphi]u\|_{L^p_{uloc}(\RR^n)}\leq
C_{s}\|\varphi\|^{1-s}_{L^{\infty}(\RR^n)}\|\nabla
\varphi\|^{s}_{L^{\infty}(\RR^n)}\|u\|_{L^p_{uloc}(\RR^n)}\]
and
\[\|[\Lambda^{s},\varphi]u\|_{L^p_{uloc}(0,T,\RR^n)}\leq
C_{s}\|\varphi\|^{1-s}_{L^{\infty}(\RR^n)}\|\nabla
\varphi\|^{s}_{L^{\infty}(\RR^n)}\|u\|_{L^p_{uloc}(0,T,\RR^n)}.\]
\end{proof}
As an application of Lemma \ref{lem.com}, we have the following corollary.
\begin{corollary}\label{Cor.com}
Let $\varphi\in C^{1}_0(\RR^n)$ be a nonnegative function satisfying $\varphi\neq 0$ and $|\varphi|\neq 0$. Then for all $s\in(0,1)$, we have that
\begin{equation}\label{eq.equv}
\begin{split}
&\|[\varphi,\Lambda^s]u\|_{L^p(\RR^n)}\leq C_{s,d_{\varphi}}(M_{\phi}+1)\|u\|_{L^p_{uloc}(\RR^n)},\\
&\|[\varphi,\Lambda^s]u\|_{L^p((0,T);L^p(\RR^n))}\leq C_{s,d_{\varphi}}(M_{\phi}+1)\|u\|_{L^{p,p}_{uloc}(T)(\RR^n)}
\end{split}
\end{equation}
where $M_{\varphi}$ is defined in Lemma \ref{lem.com} and $d_{\varphi}$ is the diameter of the support of $\varphi$.
\end{corollary}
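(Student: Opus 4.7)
The plan is to upgrade the $L^p_{uloc}$ bound of Lemma \ref{lem.com} to a global $L^p$ bound by exploiting the compact support of $\varphi$. The essential tool is the pointwise representation established in the proof of Lemma \ref{lem.com},
\[
[\Lambda^s,\varphi]u(x)=C_s\int_{\RR^n}\frac{(\varphi(x)-\varphi(y))u(y)}{|x-y|^{n+s}}\,dy,
\]
together with a splitting of $\RR^n$ into a neighborhood of $\mathrm{supp}(\varphi)$ and its complement. Fix a ball $A=B_R(x_\ast)$ with $R\simeq d_\varphi$ and $\mathrm{supp}(\varphi)\subset A$, and write $\RR^n=2A\cup(2A)^c$; I will bound the $L^p$ norm of $[\varphi,\Lambda^s]u$ on each piece separately.

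On the near-field $2A$, whose volume is $\lesssim d_\varphi^n$, I first invoke Lemma \ref{lem.com} to bound $[\varphi,\Lambda^s]u$ in $L^p_{uloc}(\RR^n)$, and then pass to $L^p(2A)$ through the elementary covering inequality $\|f\|_{L^p(B_R)}\lesssim\max(1,R^{n/p})\|f\|_{L^p_{uloc}(\RR^n)}$, obtained by covering $B_R$ with $\lesssim R^n$ unit balls. This absorbs a factor $d_\varphi^{n/p}$ into the constant $C_{s,d_\varphi}$ and yields the near-field estimate $\|[\varphi,\Lambda^s]u\|_{L^p(2A)}\lesssim_{s,d_\varphi}M_\varphi\|u\|_{L^p_{uloc}(\RR^n)}$.

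On the far-field $(2A)^c$ the function $\varphi(x)$ vanishes, so only the $\varphi(y)u(y)$ piece of the integrand survives, and $|x-y|\simeq|x-x_\ast|$ whenever $x\in(2A)^c$ and $y\in A$. This gives the pointwise bound
\[
|[\varphi,\Lambda^s]u(x)|\lesssim \frac{\|\varphi\|_{L^\infty}}{|x-x_\ast|^{n+s}}\int_{\mathrm{supp}(\varphi)}|u(y)|\,dy.
\]
Hölder's inequality and a unit-ball cover of $\mathrm{supp}(\varphi)$ yield $\int_{\mathrm{supp}(\varphi)}|u|\lesssim d_\varphi^n\|u\|_{L^p_{uloc}(\RR^n)}$, and the $L^p$-integrability of $|x-x_\ast|^{-(n+s)}$ on $\{|x-x_\ast|\geq 2R\}$ (valid because $(n+s)p>n$ for every $p\geq 1$) produces the far-field bound $\lesssim_{s,d_\varphi}\|\varphi\|_{L^\infty}\|u\|_{L^p_{uloc}(\RR^n)}$. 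Since $\varphi\in C^1_0(\RR^n)$ implies $\|\varphi\|_{L^\infty}\leq d_\varphi\|\nabla\varphi\|_{L^\infty}$, by applying the mean value theorem across the boundary of $\mathrm{supp}(\varphi)$, and therefore $\|\varphi\|_{L^\infty}\leq d_\varphi^s M_\varphi$, this far-field contribution is absorbed into $C_{s,d_\varphi}(M_\varphi+1)\|u\|_{L^p_{uloc}(\RR^n)}$.

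Summing the two contributions yields the stationary inequality in \eqref{eq.equv}. The mixed space-time version follows immediately, since the pointwise-in-$t$ bound has a constant independent of $t$: raising to the $p$-th power and integrating over $(0,T)$ turns the right-hand side into the $L^{p,p}_{uloc}(T)$ norm. The only mildly technical point is the bookkeeping of the several $d_\varphi$-dependent factors that appear in the near- and far-field estimates; no genuine analytic obstacle arises, because the hypothesis $s>0$ is precisely what ensures that the far-field kernel lies in $L^p$ at infinity and produces a finite tail contribution.
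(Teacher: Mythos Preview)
Your argument for the stationary estimate is correct and is essentially the paper's proof: the paper introduces a cutoff $\psi\in\mathcal{D}(\RR^n)$ with $\psi=1$ on $B_{d_\varphi+1}$, bounds $\psi[\varphi,\Lambda^s]u$ in $L^p$ via Lemma~\ref{lem.com} exactly as you do on $2A$, and for the far piece observes that $(1-\psi)[\varphi,\Lambda^s]u=-(1-\psi)\Lambda^s(\varphi u)=-(1-\psi)\big(C_s|x|^{-n-s}\chi_{|x|\ge1}\ast(\varphi u)\big)$, then applies Young's inequality. This convolution-and-Young step is just a repackaging of your explicit far-field pointwise bound.

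One point to tighten: your passage to the space--time inequality is not quite right as written. Integrating the pointwise-in-$t$ bound $\|[\varphi,\Lambda^s]u(t)\|_{L^p}\lesssim\|u(t)\|_{L^p_{uloc}}$ gives $\int_0^T\sup_{x_0}\|u(t)\|_{L^p(B_1(x_0))}^p\,dt$ on the right, whereas $\|u\|_{L^{p,p}_{uloc}(T)}^p=\sup_{x_0}\int_0^T\|u(t)\|_{L^p(B_1(x_0))}^p\,dt$ has the supremum \emph{outside} the time integral, and in general the former dominates the latter, not conversely. The fix is to run the near/far decomposition directly in the space--time norms: for the near field invoke the $L^{p,p}_{uloc}$ version of Lemma~\ref{lem.com} and then cover $2A$ by finitely many unit balls; for the far field, your pointwise bound gives $\|[\varphi,\Lambda^s]u\|_{L^p((0,T)\times(2A)^c)}\lesssim_{s,d_\varphi}\|\varphi\|_{L^\infty}\big(\int_0^T\int_A|u|^p\big)^{1/p}$, and a finite unit-ball cover of $A$ then produces $\|u\|_{L^{p,p}_{uloc}(T)}$.
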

\begin{proof}
Let $\psi\in \mathcal{D}(\RR^n)$ be a nonnegative function satisfying $\psi=1$ in $B_{d_{\varphi}+1}$. Obviously, we have that
\begin{align*}
(1-\psi)[\varphi,\Lambda^s]u=&-(1-\psi)
\Lambda^s(\varphi u)=-(1-\psi)
\Big(C_s\frac{I_{|x|\geq 1}}{|x|^{n+s}}\ast u\varphi\Big).
\end{align*}
Hence, by Lemma \ref{lem.com} and Young's inequality, we deduce that for all $1\leq p\leq \infty$
\begin{align*}
\|[\varphi,\Lambda^s]u\|_{L^p(\RR^n)}\leq&\|\psi[\varphi,\Lambda^s]u\|_{L^p(\RR^n)}
+\|(1-\psi)[\varphi,\Lambda^s]u\|_{L^p(\RR^n)}\\
\leq& C_{s,d_{\varphi}}M_{\varphi}\|u\|_{L^p_{uloc}(\RR^n)}+C_s\|u\varphi\|_{L^p(\RR^n)}\\
\leq &C_{s,d_{\varphi}}(M_{\varphi}+1)\|u\|_{L^p_{uloc}(\RR^n)}.
\end{align*}
Similarly, we deduce that
\[\|[\phi,\Lambda^s]u\|_{L^{p,p}(0,T,\RR^n)}\leq C_{s,d_{\phi}}(M_{\phi}+1)\|u\|_{L^{p,p}(0,T,\RR^n)},\quad 1\leq p\leq \infty.\]
Then, we complete the proof of Corollary \ref{Cor.com}.
\end{proof}
As a consequence of Lemma \ref{lem.com} and Corollary \ref{Cor.com}, we obtain the following lemma. The proof is obvious. Here we omit it.
\begin{lemma}\label{lem.eqvi}
Let $s\in (1,2)$, then the norm $\|\cdot\|_{H^s_{uloc}}$ has the following equivalence relations:
\begin{align*}
&\|u\|_{H^s_{uloc}(\RR^n)}\simeq_{s} \|u\|_{L^2_{uloc}(\RR^n)}+\|\Lambda^su\|_{L^2_{uloc}(\RR^n)}, \quad \text{if }s\in(0,1),\\
&\|u\|_{H^1_{uloc}(\RR^n)}\simeq \|u\|_{L^2_{uloc}(\RR^n)}+\|\nabla u\|_{L^2_{uloc}(\RR^n)},\\
&\|u\|_{H^s_{uloc}(\RR^n)}\simeq_{s-1}\|u\|_{L^2_{uloc}(\RR^n)}+\|\nabla u\|_{L^2_{uloc}(\RR^n)})+\|\Lambda^{s-1}\nabla u\|_{L^2_{uloc}(\RR^n)}, \quad \text{if }s\in(1,2).
\end{align*}
\end{lemma}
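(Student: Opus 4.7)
The plan is to establish each of the three equivalences by a symmetric pair of two-sided bounds that transport $\Lambda^{s}$ (or, in the third case, $\Lambda^{s-1}\nabla$) across the cutoff $\phi_{x_0}$. All of the work is done by the commutator estimates of Lemma \ref{lem.com} and Corollary \ref{Cor.com}, combined with the elementary facts that $\phi_{x_0}\equiv 1$ on $B_1(x_0)$ and $\operatorname{supp}\phi_{x_0}\subset B_2(x_0)$ with constants independent of $x_0\in\RR^n$, and with the Plancherel identity $\|\Lambda^s f\|_{L^2}=\|\Lambda^{s-1}\nabla f\|_{L^2}$.

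For the case $s\in(0,1)$, I would first prove the ``$\lesssim$'' direction by expanding $\Lambda^s(\phi_{x_0}u)=\phi_{x_0}\Lambda^s u+[\Lambda^s,\phi_{x_0}]u$ and bounding the commutator by $C_s\|u\|_{L^2_{uloc}}$ via Corollary \ref{Cor.com}, while $\|\phi_{x_0}\Lambda^s u\|_{L^2}\lesssim \|\Lambda^s u\|_{L^2_{uloc}}$ is immediate because $\phi_{x_0}$ has uniformly bounded support. Combined with $\|\phi_{x_0}u\|_{L^2}\lesssim\|u\|_{L^2_{uloc}}$ and $\sup_{x_0}$ taken, this gives one direction. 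For the reverse, the identity $\phi_{x_0}\equiv 1$ on $B_1(x_0)$ yields $\|\Lambda^s u\|_{L^2(B_1(x_0))}\leq \|\phi_{x_0}\Lambda^s u\|_{L^2}$, and rewriting $\phi_{x_0}\Lambda^s u=\Lambda^s(\phi_{x_0}u)-[\Lambda^s,\phi_{x_0}]u$ together with the commutator bound closes this half. The case $s=1$ is structurally identical, with the ordinary Leibniz rule $\nabla(\phi_{x_0}u)=\phi_{x_0}\nabla u+(\nabla\phi_{x_0})u$ replacing the commutator.

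The third case $s\in(1,2)$ I would reduce to the first via $\|\Lambda^s(\phi_{x_0}u)\|_{L^2}=\|\Lambda^{s-1}\nabla(\phi_{x_0}u)\|_{L^2}$, where now $s-1\in(0,1)$ falls within the scope of Lemma \ref{lem.com} and Corollary \ref{Cor.com}. Expanding $\nabla(\phi_{x_0}u)=\phi_{x_0}\nabla u+(\nabla\phi_{x_0})u$ and sliding $\Lambda^{s-1}$ past $\phi_{x_0}$ and past $\nabla\phi_{x_0}$ by extracting the associated commutators, all resulting pieces are controlled by the right-hand side $\|u\|_{L^2_{uloc}}+\|\nabla u\|_{L^2_{uloc}}+\|\Lambda^{s-1}\nabla u\|_{L^2_{uloc}}$ except for a residual $\|(\nabla\phi_{x_0})\Lambda^{s-1}u\|_{L^2}$. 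This I would handle by an $L^2$–$H^1$ Sobolev interpolation of the form $\|\Lambda^{s-1}u\|_{L^2(B_2(x_0))}\lesssim \|u\|_{L^2_{uloc}}^{2-s}\bigl(\|u\|_{L^2_{uloc}}+\|\nabla u\|_{L^2_{uloc}}\bigr)^{s-1}$, applied to $\tilde\phi_{x_0}u$ for a slightly larger cutoff $\tilde\phi_{x_0}$. The reverse inequality follows the mirror pattern of the first case: localize on $B_1(x_0)$ and commute $\Lambda^{s-1}$ back across $\phi_{x_0}$.

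The main obstacle is precisely this residual term in the third case. The commutator machinery of Corollary \ref{Cor.com} only handles $\Lambda^\sigma$ with $\sigma\in(0,1)$, so when $\Lambda^{s-1}$ lands on $(\nabla\phi_{x_0})u$ one is left with the genuinely fractional quantity $(\nabla\phi_{x_0})\Lambda^{s-1}u$ which is not directly present on the right-hand side; the interpolation step above is needed to close the loop. Everything else is straightforward bookkeeping across $\sup_{x_0}$, so the proof is otherwise routine given the tools already in hand.
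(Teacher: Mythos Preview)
Your proposal is correct and matches the paper's approach: the paper does not actually write out a proof of this lemma, merely stating that it is an obvious consequence of Lemma~\ref{lem.com} and Corollary~\ref{Cor.com} and omitting the details, which is precisely the commutator-transport mechanism you describe. Your treatment of the residual term $\|(\nabla\phi_{x_0})\Lambda^{s-1}u\|_{L^2}$ via interpolation is fine, though the cruder bound $\|\Lambda^{s-1}((\nabla\phi_{x_0})u)\|_{L^2}\le\|(\nabla\phi_{x_0})u\|_{H^1}\lesssim\|u\|_{L^2_{uloc}}+\|\nabla u\|_{L^2_{uloc}}$ already suffices and is likely what the authors had in mind when calling the proof ``obvious.''
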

\section{Local in time existence of local Leray solutions}\label{sec.3}
\setcounter{section}{3}
\setcounter{equation}{0}

In this section, we will proceed in following 5 steps to show the local-in-time existence of local Leray solutions to \eqref{NS}-\eqref{NSI} with initial data $u_0\in L^2_{uloc}$.

\textbf{Step 1. Approximate solution sequence.}
In this step, we consider the following mollified system of \eqref{NS}-\eqref{NSI}:
\begin{equation}\tag{MFNS}\label{MNS}
\left\{\begin{array}{ll}
\partial_t
u_{\varepsilon}+\Lambda^{2s}u_{\varepsilon}+J_{\varepsilon}u_{\varepsilon}\cdot\nabla
u_{\varepsilon}+\nabla P_{\varepsilon}=0,\quad\Div u_{\varepsilon}=0,\\
u_{\varepsilon}|_{t=0}=u_0,
\end{array}\right.
\end{equation}
where $J_{\varepsilon}f:=\varepsilon^{-3}\int_{\RR^3}\eta(\frac{x-y}{\varepsilon})f(y)
\,\mathrm{d}y$ with $\eta\in \mathcal{D}(B_1)$ and $0\leq \eta\leq 1$. Applying Leray
projection operator $\mathbb{P}$ to the first equation of \eqref{MNS}, by the homogeneous principle, we can rewrite \eqref{MNS} as follows:
\begin{equation}\label{INS}
\begin{split}
u_{\varepsilon}&=e^{- t\Lambda^{2s}}u_0+\int^t_0e^{-
(t-\tau)\Lambda^{2s}}\Div \mathbb{P}\big(-J_{\varepsilon}u_{\varepsilon}
\otimes u_{\varepsilon}\big)(\tau)\,\mathrm{d}\tau\triangleq e^{-t\Lambda^{2s}}u_0+B_{\varepsilon}(u_{\varepsilon},u_{\varepsilon}).
\end{split}
\end{equation}
Next, we will use Banach fixed point Theorem given in \cite{Can04} to prove the existence of solutions to \eqref{INS} in the Banach space $(X_{T_{\varepsilon}},\|\cdot\|_{X_{T_{\varepsilon}}})$ with $T_{\varepsilon}\leq 1,\,\varepsilon\leq 1$ defined as follows:
\begin{align*}
&X_{T_{\varepsilon}}\triangleq\{ u\in
L^{\infty}([0,T_{\varepsilon}];L^2_{uloc})\,\big|\,\Lambda^s u\in
L^2([0,T_{\varepsilon}];L^2_{uloc})\},\\
&\|u\|_{X_{T_{\varepsilon}}}\triangleq\|u\|_{L^{\infty}([0,T_{\varepsilon}];L^2_{uloc})}
+\|\Lambda^s u\|_{L^2([0,T_{\varepsilon}];L^2_{uloc})}.
\end{align*}

To realize it, we first show $e^{-t\Lambda^{2s}}u_0\in X_{T_{\varepsilon}}$.
By \eqref{G_t-Lp} in Lemma \ref{lem.G_t}, we have that
\[\|e^{- t\Lambda^{2s}}u_0\|_{L^{\infty}([0,T_{\varepsilon}];L^2_{uloc})}\lesssim
\|u_0\|_{L^2_{uloc}}.\]
To prove 
\begin{equation}\label{est.1}
\|\Lambda^s e^{-t\Lambda^{2s}}u_0\|_{L^{2,2}_{uloc}(T_{\varepsilon})}\lesssim_s\|u_0\|_{L^2_{uloc}},
\end{equation} 
we split $u_0$ into two parts: $u^1_0\triangleq \chi_{B_3(x_0)}u_0$ and $u^2_0\triangleq \chi_{B^c_{3}(x_0)}u_0$.
It's obvious that
\[\|\Lambda^{s}e^{-t\Lambda^{2s}}u^1_0\|_{L^2([0,T_{\varepsilon}];L^2)}\lesssim
\|u^1_0\|_{L^2}\lesssim\|u_0\|_{L^2_{uloc}}.\]
To show $\Lambda^s e^{-t\Lambda^{2s}}u^2_0\in L^{2,2}_{uloc}(T_{\varepsilon})$, we will invoke the following lemma.
\begin{lemma}\label{Lem.3-1}
Let
\[T_{x_0,R,\alpha}f=\int_{B^c_{R}(x_0)}\frac{1}{|x-y|^{3+\alpha}}f(y)\,\mathrm{d}y,\,\quad x_0\in \RR^3,\,R>2\]
Then for all $x_0\in \RR^3$, we have that
\[\|T_{x_0,R,\alpha}f\|_{L^{\infty}(B_1(x_0))}\leq C_{\alpha}R^{-s}\|f\|_{L^1_{uloc}}.\]
\end{lemma}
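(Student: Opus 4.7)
The plan is to exploit the decay of the kernel: for $x \in B_1(x_0)$ and $y \in B_R^c(x_0)$ with $R>2$, the distance $|x-y|$ is comparable to $|x_0-y|$, so the kernel effectively depends only on $|x_0-y|$. I can then perform a dyadic decomposition of the exterior region $B_R^c(x_0)$ and use the $L^1_{uloc}$ norm to control the mass of $f$ on each annulus.

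More precisely, first I would observe that for any $x \in B_1(x_0)$ and $y \in B_R^c(x_0)$,
\[
|x-y| \;\geq\; |x_0-y| - |x-x_0| \;\geq\; |x_0-y| - 1 \;\geq\; \tfrac{1}{2}|x_0-y|,
\]
since $|x_0-y|>R>2$. Hence $|x-y|^{-3-\alpha} \lesssim |x_0-y|^{-3-\alpha}$ uniformly in $x\in B_1(x_0)$, which reduces the task to bounding
\[
\int_{|x_0-y|>R} \frac{|f(y)|}{|x_0-y|^{3+\alpha}}\,\mathrm{d}y.
\]

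Next I would split $B_R^c(x_0)$ into dyadic annuli $A_k = \{y : 2^k R < |x_0-y| \leq 2^{k+1}R\}$ for $k\geq 0$. Since $R>2$, each ball $B_{2^{k+1}R}(x_0)$ can be covered by $\lesssim (2^{k+1}R)^3$ unit balls, so $\int_{A_k}|f|\,\mathrm{d}y \lesssim (2^{k+1}R)^3 \|f\|_{L^1_{uloc}}$. Bounding the kernel on $A_k$ by $(2^k R)^{-3-\alpha}$ gives
\[
\int_{|x_0-y|>R}\frac{|f(y)|}{|x_0-y|^{3+\alpha}}\,\mathrm{d}y \;\lesssim\; \sum_{k\geq 0}(2^k R)^{-3-\alpha}(2^{k+1}R)^3 \,\|f\|_{L^1_{uloc}} \;\lesssim_{\alpha}\; R^{-\alpha}\,\|f\|_{L^1_{uloc}},
\]
where the geometric series converges because $\alpha>0$. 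Taking the supremum over $x\in B_1(x_0)$ yields the claimed estimate.

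There is no genuine obstacle here; the only point requiring care is the combinatorial counting that allows a ball of radius $2^{k+1}R$ to be covered by $\lesssim (2^{k+1}R)^3$ unit balls, which underlies the transition from the $L^1_{uloc}$ norm to the $L^1$ mass on the annulus. The rest is bookkeeping of the dyadic sum.
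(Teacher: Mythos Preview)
Your proof is correct and follows essentially the same approach as the paper: replace $|x-y|$ by $|x_0-y|$ via the triangle inequality, dyadically decompose $B_R^c(x_0)$ into annuli, and sum using the $L^1_{uloc}$ control on each annulus. The only cosmetic difference is that you spell out the unit-ball covering argument behind the bound $\int_{A_k}|f|\lesssim (2^{k+1}R)^3\|f\|_{L^1_{uloc}}$, which the paper uses implicitly; your final exponent $R^{-\alpha}$ is the intended one (the paper's $R^{-s}$ in the statement and $R^{-1}$ in the displayed proof are evident typos for $R^{-\alpha}$).
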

\begin{proof}
Since $|x-y|\geq R|y-x_0|/2$ for $x\in B_1(x_0)$, $y\in B^c_{R}(x_0)$, we have that for all $x\in B_1(x_0)$
\begin{align*}
T_{x_0,R,\alpha}f\leq& C_{\alpha} \int_{|y-x_0|\geq R}\frac{1}{|x_0-y|^{3+\alpha}}f{y}\,\mathrm{d}y\\
\leq& C_{\alpha}\sum^{\infty}_{i=1}\int_{R\cdot 2^{i-1}<|y-x_0|\leq R\cdot 2^i}\frac{
1}{(|x_0-y|)^{3+\alpha}}|f(y)|\,dy\\
\leq &C_{\alpha}\sum^{\infty}_{i=1}\frac {(R\cdot 2^{i-1})^3}{(R\cdot
2^{i-1})^{3+\alpha}}\|f\|_{L^1_{uloc}}\leq C_{\alpha} R^{-1} \|f\|_{L^1_{uloc}}.
\end{align*}
Then we complete the proof of Lemma \ref{Lem.3-1}
\end{proof}
Hence, by \eqref{G_t-p} in Lemma \ref{lem.G_t} and Lemma \ref{Lem.3-1}, we easily deduce that for all $x_0\in \RR^3$
\[\|\Lambda^se^{- t\Lambda^{2s}}u^2_0\|_{L^{\infty}([0,T_{\varepsilon}]\times B_1(x_0))}\lesssim_s \|u_0\|_{L^2_{uloc}}.\]
Then, collecting the above three estimates, we get that
\begin{equation}\label{est.2}
\|e^{-t\Lambda^{2s}}u_0\|_{X_{T_{\varepsilon}}}\leq C_1
(1+\sqrt{T_{\varepsilon}})\|u_0\|_{L^2_{uloc}}
\end{equation}
for some positive constant $C_1$ depending only on $s$. Next, we deal with the bilinear term $B_{\varepsilon}(v_1,v_2)$ with $v_1$, $v_2\in X_{T_{\varepsilon}}$. Setting $F=-J_{\varepsilon}v_1\otimes v_2$,
by \eqref{O_t-Lp} in Lemma \ref{lem.O_t}, we have that
\begin{align*}
\|B_{\varepsilon}(v_1,v_2)\|_{L^2_{uloc}}
+\|\Lambda^sB_{\varepsilon}(v_1,v_2)\|_{L^2_{uloc}}
&\lesssim \int^t_0(t-\tau)^{-\frac
1{2s}}(\|F(\tau)\|_{L^2_{uloc}}+\|\Lambda^sF(\tau)\|_{L^2_{uloc}})\,\mathrm{d}\tau.
\end{align*}
which shows that
\begin{equation}\label{est-3-1}
\|B_{\varepsilon}(v_1,v_2)\|_{X_{T_{\varepsilon}}}\lesssim_s T_{\varepsilon}^{1-1/(2s)}\|F\|_{X_{T_{\varepsilon}}}.
\end{equation}
To proceed \eqref{est-3-1}, by \eqref{eq.you2} with $\alpha=1$ in Lemma \ref{lem.You}, we observe that
\begin{align}
&\|J_{\varepsilon}v_1\|_{L^{\infty}(\RR^3\times[0,T_{\varepsilon}])}\lesssim
\varepsilon^{-3/2}\|v_1\|_{L^{\infty}([0,T_{\varepsilon}];L^2_{uloc})},\label{eq.1}\\
&\|\nabla J_{\varepsilon}v_1\|_{L^{\infty}(\RR^3\times[0,T_{\varepsilon}])}\lesssim
\varepsilon^{-5/2}\|v_1\|_{L^{\infty}([0,T_{\varepsilon}];L^2_{uloc})}.\label{eq.2}
\end{align}
With the aid of \eqref{eq.1}, we get that
\begin{align*}
\|F\|_{L^{\infty}([0,T_{\varepsilon}];L^2_{uloc})}&\leq
\|J_{\varepsilon}v_1\|_{L^{\infty}(\RR^3\times[0,T_{\varepsilon}])}
\|v_2\|_{L^{\infty}([0,T_{\varepsilon}];L^2_{uloc})}\\
&\lesssim\varepsilon^{-3/2}\|v_1\|_{L^{\infty}([0,T_{\varepsilon}];L^2_{uloc})}
\|v_2\|_{L^{\infty}([0,T_{\varepsilon}];L^2_{uloc})}.
\end{align*}
In addition, invoking Lemma \ref{lem.com}, \eqref{eq.1} and \eqref{eq.2}, we obtain that
\begin{align*}
&\|\Lambda^s F\|_{L^2([0,T_{\varepsilon}];L^2_{uloc})}\\
&\lesssim\|[\Lambda^s,J_{\varepsilon}v_1]v_2\|_{L^2([0,T_{\varepsilon}];L^2_{uloc})}
+\|J_{\varepsilon}v_1\|_{L^{\infty}(\RR^3\times[0,T_{\varepsilon}])}
\|\Lambda^sv_2\|_{L^2([0,T_{\varepsilon}];L^2_{uloc})}\\
&\lesssim (\varepsilon^{-3/2-s}\sqrt{T_{\varepsilon}}\|v_2\|_{L^{\infty}([0,T_{\varepsilon}];L^2_{uloc})}
+\varepsilon^{-3/2}\|\Lambda^sv_2\|_{L^2([0,T_{\varepsilon}];L^2_{uloc})})\|v_1\|_{L^{\infty}([0,T_{\varepsilon}];L^2_{uloc})}
.
\end{align*}
Substituting the above two estimates into \eqref{est-3-1}, we
finally obtain that
\begin{equation}\label{est.3}
\|B_{\varepsilon}(v_1,v_2)\|_{X_{T_{\varepsilon}}}\leq C_2\varepsilon^{-3}T_{\varepsilon}^{1/3}
\|v_1\|_{X_{T_{\varepsilon}}}\|v_2\|_{X_{T_{\varepsilon}}}
\end{equation}
where we use that $\varepsilon<1$, $T_{\varepsilon}<1$ and $3/4<s<1$.

Finally, collecting \eqref{est.2} and \eqref{est.3}, applying Banach fixed
Theorem to \eqref{INS}, there exists a unique mild solution to \eqref{MNS} in
$X_{T_{\varepsilon}}$ provided that
\[T_{\varepsilon}<\min\{1,\varepsilon^{9}(4C_1C_2\|u_0\|_{L^2_{uloc}})^{-3}
\}.\]

\textbf{Step 2. Uniform existence time.}
Since $u_{\varepsilon}\in L^{\infty}((0,T_{\varepsilon});L^2_{uloc})$, the equation
\[\nabla P_{\varepsilon}=-\nabla\frac{\Div\Div}{\Delta}\big(J_{\varepsilon}u_{\varepsilon} \otimes u_{\varepsilon}\big)\]
is well-defined.
Thus, $P_{\varepsilon}$ is defined up to a function which does not depend on $x$.
For each nonnegative function $\varphi\in \mathcal{D}(\RR^3)$, multiplying the first equation of
\eqref{MNS} by $u_{\varepsilon}\varphi$ and then integrating over $\RR^3\times
[0,t]$, we get that
\begin{equation}\label{eq.loc'}
\begin{split}
&\int_{\RR^3}\varphi|u_{\varepsilon}(t)|^2\,\mathrm{d}x+2\int^t_0\int_{\RR^3}
\varphi_{x_0}\big|\Lambda^{s}u_{\varepsilon}\big|^2\,\mathrm{d}x\mathrm{d}\tau\cdot\\
&=\int_{\RR^3}\varphi|u_0|^2\,\mathrm{d}x-\int^t_0\int_{\RR^3}\big([\tilde{\varphi},\Lambda^{s}]\Lambda^s
u_{\varepsilon}
\cdot u_{\varepsilon}\varphi+\tilde{\varphi}\Lambda^s
u_{\varepsilon}\cdot[\Lambda^{s},\varphi]u_{\varepsilon}\big)\,\mathrm{d}x\mathrm{d}\tau
\\
&\quad+\int^t_0\int_{\RR^3}|u_{\varepsilon}|^2
J_{\varepsilon}u_{\varepsilon}\cdot\nabla\varphi\,\mathrm{d}x\mathrm{d}\tau
-\int^t_0\int_{\RR^3}\nabla P_{\varepsilon}\cdot
u_{\varepsilon}\varphi\,\mathrm{d}x\mathrm{d}\tau
\end{split}
\end{equation}
where $\tilde{\varphi}\in \mathcal{D}(\RR^3)$ satisfies $\varphi\tilde{\varphi}=1$.
Let $\phi\in \mathcal{D}(B_2(0))$ be a nonnegative function satisfying $\varphi(x)=1$ in $B_1(0)$ and $0\leq
\phi(x)\leq 1$. Apply $\varphi=\phi_{x_0}=\phi(\cdot-x_0)$ and $\tilde{\varphi}=\phi((\cdot-x_0)/2)$ to \eqref{eq.loc'}, and then denote by $\text{I.1}$ to $\text{I.3}$ the last three terms on right-hand side of \eqref{eq.loc'}, respectively.
Set
\[\alpha_{\varepsilon}(t)=\sup_{x_0\in\RR^3}\big(\int_{B_1(x_0)}|u_{\varepsilon}(t)|^2\,dx\big)^{1/2},
\quad \beta_{\varepsilon}(t)=\sup_{x_0\in\RR^3}\big(\int^t_0\int_{B_1(x_0)}|\Lambda^s
u_{\varepsilon}|^2\,\mathrm{d}x\mathrm{d}\tau\big)^{1/2},\]
\[\gamma_{\varepsilon}(t)=\sup_{x_0\in\RR^3}\big(\int^t_0\int_{B_1(x_0)}|
u_{\varepsilon}|^3\,\mathrm{d}x\mathrm{d}\tau\big)^{1/3}.\]
Then, we claim that
\begin{equation}\label{inter}
\begin{split}
\gamma^3_{\varepsilon}(t)\lesssim_s\beta^{\frac3{2s}}_{\varepsilon}(t)\Big(1
+\int^t_0\alpha^{\frac{6(2s-1)}{4s-3}}_{\varepsilon}(\tau)\,\mathrm{d}\tau\Big)^{1-\frac3{4s}}
+\int^t_0
\alpha^3_{\varepsilon}(\tau)\,\mathrm{d}\tau.
\end{split}
\end{equation}
In fact, by the interpolation between $L^2$ and $L^{\frac6{3-2s}}$ and the Sobolev inequality: $\dot{H}^s\hookrightarrow L^{\frac6{3-2s}}$, we have that
\[\|\phi_{x_0}u_{\varepsilon}\|_{L^3(\RR^3)}\leq
\|\phi_{x_0}u_{\varepsilon}\|^{\frac{2s-1}{2s}}_{L^2(\RR^3)}
\|\Lambda^s(\phi_{x_0}u_{\varepsilon})\|^{\frac 1{2s}}_{L^2(\RR^3)}.\]
Furthermore, by Corollary \ref{Cor.com}, we observe that
\begin{align*}
\|\Lambda^s(\phi_{x_0}u_{\varepsilon})\|_{L^2}&\leq
\|[\Lambda^s,\phi_{x_0}]u_{\varepsilon}\|_{L^2}
+\|\phi_{x_0}\Lambda^s u_{\varepsilon}\|_{L^2}\leq \|\phi_{x_0}\Lambda^su_{\varepsilon}\|_{L^2}+C_s\|u_{\varepsilon}(t)\|_{L^2_{uloc}}.
\end{align*}
Collecting the above two inequalities, we easily deduce \eqref{inter}.

Next, we estimate $\text{I.1}-\text{I.3}$ in turns. By H\"{o}lder's and Young's inequalities, and Lemma \ref{lem.com}, we have that
\begin{align*}
|\text{I.1}|+|\text{I.2}|\leq
\frac12\beta^2_{\varepsilon}(t)+C_s\int^t_0\alpha^2_{\varepsilon}(\tau)\,\mathrm{d}\tau+\gamma^3_{\varepsilon}(t).
\end{align*}
To estimate $\text{I.3}$, we only need the information of $P_{\varepsilon}$ in $B_{2}(x_0)$. Hence, for any fixed $x\in B_2(x_0)\times(0,T)$, we decompose $P_{\varepsilon}(x,t)$ in the following way: take a nonnegative function $\psi_{x_0}\triangleq\psi(x-x_0)$ with $\psi\in
\mathcal{D}(B_{8}(0))$ and $\psi=1$ in $B_{4}(0)$. Then there exists a function
$P_{x_0,\varepsilon}(t)$ depending only on $x_0,t,\psi_{x_0}$ such that for $(x,t)\in
B_2(x_0)\times (0,T)$
\begin{equation}\label{eq.press}
\begin{split}
P_{\varepsilon}(x,t)=& -\Delta^{-1}\Div\Div (J_{\varepsilon}u_{\varepsilon}\otimes
u_{\varepsilon}\psi_{x_0})\\
&-\int_{\RR^3}\big(k(x-y)-k(x_0-y)\big)(J_{\varepsilon}u_{\varepsilon}\otimes
u_{\varepsilon})(y,t)(1-\psi_{x_0}(y))\,dy+P_{x_0,r,\varepsilon}(t)\\
\triangleq&
P^1_{x_0,\varepsilon}(x,t)+P^2_{x_0,\varepsilon}(x,t)+P_{x_0,\varepsilon}(t)\triangleq
P_{x_0,\varepsilon}(x,t)+P_{x_0,\varepsilon}(t).
\end{split}
\end{equation}
Then we can replace $P_{\varepsilon}$ by $P_{x_0,\varepsilon}$.
Furthermore, by Calderon-Zygmund estimate, we observe that
\[\|P^1_{x_0,\varepsilon}(t)\|_{L^{\frac32}([0,t]\times\RR^3)}\lesssim
\|J_{\varepsilon}u_{\varepsilon}\otimes u_{\varepsilon}\|_{L^{\frac32}([0,t]\times
B_8(x_0))}\lesssim \gamma^2_{\varepsilon}(t).\]
For $P^2_{x_0,\varepsilon}(x,t)$, by \eqref{eq.press1} and Lemma \ref{Lem.3-1}, we have that for all $x\in B_2(x_0)$
\begin{align*}
|P^2_{x_0,\varepsilon}(x,t)| \lesssim \alpha^2_{\varepsilon}(t).
\end{align*}
Combining with the above two estimates, we deduce that
\begin{align*}
|\text{I.3}|\lesssim  &\|P^1_{x_0,\varepsilon}\|^2_{L^{\frac32}([0,t];L^{\frac32}(\RR^3))}
\|u_{\varepsilon}\|_{L^3([0,t];L^3(B_4(x_0)))}\\
&+\int^t_0\|P^2_{x_0,\varepsilon}(\tau)\|_{L^{2}(B_{4}(x_0))}
\|u_{\varepsilon}(\tau)\|_{L^2(B_4(x_0))}\mathrm{d}\tau
\lesssim\gamma^3_{\varepsilon}(t)+\int^t_0\alpha^3_{\varepsilon}(\tau)\mathrm{d}\tau.
\end{align*}
Collecting the estimates of $\text{I.1}$-$\text{I.3}$, we obtain that
\begin{align*}
\alpha^2_{\varepsilon}(t)+2\beta^2_{\varepsilon}(\tau)\leq 2\alpha^2_{\varepsilon}(0)+\frac12\beta^2_{\varepsilon}(t)+C_s\int^t_0\big(\alpha^2_{\varepsilon}(\tau)
+\alpha^3_{\varepsilon}(\tau)\big)\,\mathrm{d}\tau+C_s\gamma^3_{\varepsilon}(t).
\end{align*}
Substituting \eqref{inter} into the above inequality and then using Young's and H\"{o}lder's inequalities, we
obtain that
\begin{equation}\label{est.6}
\alpha^2_{\varepsilon}(t)+\beta^2_{\varepsilon}(t)
\leq 2\alpha^2_{\varepsilon}(0)+C_s\int^t_0\big(\alpha^2_{\varepsilon}(\tau)
+\alpha^{\frac{6(2s-1)}{4s-3}}_{\varepsilon}(\tau)\big)\,\mathrm{d}\tau.
\end{equation}
Then, by the continuity method, we get that
\[\alpha^2_{\varepsilon}(t)+\beta^2_{\varepsilon}(t)\leq 4\|u_0\|^2_{L^2_{uloc}},\quad \forall \,\,0\leq t\leq T^*\]
with
\begin{equation}\label{est.7}
T^*=\min\{1,(4C_s)^{-1},C^{-1}_s2^{-\frac{6(2s-1)}{4s-3}}\|u_0\|^{-\frac{4s}{4s-3}
}_{L^2_{uloc}}\}.
\end{equation}

\textbf{Step 3. Existence of a weak approximate solution.}
From Step 2, we know that there exits a $M>0$ depending only on $\|u_0\|_{L^2_{uloc}}$ and $T^*$ defined in \eqref{est.7} such that
\[\|u_{\varepsilon}\|_{L^{\infty}([0,T^*];L^2_{uloc})}
+\|\Lambda^su_{\varepsilon}\|_{L^{2,2}_{uloc}(T^*)}
+\|P_{x_0,\varepsilon}\|_{L^{3/2,3/2}_{uloc}(T^*)}\leq M,\quad \forall\varepsilon\in (0,1).\]
 This, together with Corollary \ref{Cor.com}, implies that for any
nonnegative function $\phi\in \mathcal{D}(\RR^3)$, $\{\phi u_\varepsilon\}$ is bounded in
$L^{\infty}([0,T^*];L^2)\cap L^2([0,T^*];\dot{H}^s)$ and $\{\phi P_{x_0,2,\varepsilon}\}$
is bounded in $L^{\frac32}([0,T^*];L^{\frac32})$. In addition, since
\[\partial_t
u_{\varepsilon}+\Lambda^{2s}u_{\varepsilon}+\mathbb{P}\big(J_{\varepsilon}u_{\varepsilon}
\cdot\nabla u_{\varepsilon}\big)=0,\]
we conclude that $\phi\partial_t u_{\varepsilon}$ remains bounded in
$L^{\frac32}([0,T^*];H^{-\frac32})$. According to the Aubin-Lions Lemma in \cite{Se15} and
the Cantor diagonal process, we can find a subsequence of
$(u_{\varepsilon},P_{x_0,\varepsilon})$, still denoted by
$(u_{\varepsilon},P_{x_0,\varepsilon})$, and a pair $(u,P)$ with $P=P_{x_0}+P_{x_0}(t)$ defined in Remark \ref{rem.1} such that
\begin{equation}\label{eq.con}
\begin{cases}
u_{\varepsilon} \overset{*}\rightharpoonup u \text{ in } L^{\infty}([0,T^*];L^2_{uloc})
\text{ and } \Lambda^su_{\varepsilon} \rightharpoonup \Lambda^su\text{ in }
L^{2,2}_{uloc}(T^*);\\
u_{\varepsilon} \rightarrow u\text{ in } L^{3}_{uloc}(T^*);\\
P_{x_0,\varepsilon}\rightharpoonup P_{x_0} \text{ in } L^{\frac32}_{uloc}(T^*).
\end{cases}
\end{equation}
Using \eqref{eq.con}, we can easily verify that $(u,P)$ satisfies
\eqref{NS} in the sense of distribution.

\textbf{Step 4. Local energy inequality for the weak limit.}
It's obvious that $(u_{\varepsilon},P_{x_0,\varepsilon})$ satisfies the following
equality:
\begin{align*}
2\int^t_0\int_{\RR^3} \psi\big|\Lambda^{s}
u_{\varepsilon}\big|^2\,\mathrm{d}x\mathrm{d}\tau=
&\int^t_0\int_{\RR^3}\big|u_{\varepsilon}\big|^2\partial_t\psi\,\mathrm{d}x\mathrm{d}\tau
-2\int^t_0\int_{\RR^3} [\tilde{\psi},\Lambda^{s}]\Lambda^su_{\varepsilon}\cdot
u_{\varepsilon}\psi\,\mathrm{d}x\mathrm{d}\tau\\
&-2\int^t_0\int_{\RR^3}
\tilde{\psi}\Lambda^su_{\varepsilon}\cdot[\Lambda^{s},\psi]u_{\varepsilon}\,\mathrm{d}x\mathrm{d}\tau\\
&+\int^t_0\int_{\RR^3}|u_{\varepsilon}|^2
J_{\varepsilon}u_{\varepsilon}\cdot\nabla\psi\,\mathrm{d}x\mathrm{d}\tau
+\int^t_0\int_{\RR^3}P_{x_0,2,\varepsilon}\cdot
u_{\varepsilon}\nabla\psi\,\mathrm{d}x\mathrm{d}\tau
\end{align*}
for any nonnegative function $\psi\in \mathcal{D}(\RR^3\times \RR^+)$ and $\tilde{\psi}\in \mathcal{D}(\RR^3)$ with $\tilde{\psi}\psi=\psi$.
According to \eqref{eq.con}, we can easily get that
\begin{equation}\label{loc-en2}
\begin{split}
2\int^t_0\int_{\RR^3} \big|\psi\Lambda^s u\big|^2\,\mathrm{d}x\mathrm{d}\tau\leq
&\int^t_0\int_{\RR^3}\big|u\big|^2\partial_t\psi\,\mathrm{d}x\mathrm{d}\tau
-2\int^t_0\int_{\RR^3} [\tilde{\psi},\Lambda^{s}]\Lambda^s u\cdot
u\psi\,\mathrm{d}x\mathrm{d}\tau\\
&-2\int^t_0\int_{\RR^3} \tilde{\psi}\Lambda^su\cdot[\Lambda^s,\psi]u\,\mathrm{d}x\mathrm{d}\tau\\
&+\int^t_0\int_{\RR^3}|u|^2
u\cdot\nabla\psi\,\mathrm{d}x\mathrm{d}\tau
+2\int^t_0\int_{\RR^3}P\cdot u\nabla\psi\,\mathrm{d}x\mathrm{d}\tau
\end{split}
\end{equation}
for any nonnegative function $\psi\in \mathcal{D}(\RR^3\times \RR^+)$ and $\tilde{\psi}\in \mathcal{D}(\RR^3)$ with $\tilde{\psi}\psi=\psi$.

\textbf{Step 5. Strong convergence of $u(t)$ to $u_0$ in $L^2_{loc}$.}
Since $\varphi\partial_t u\in
L^{3/2}([0,T^*];H^{-3/2})$ for each nonnegative function $\varphi\in \mathcal{D}(\RR^3)$, we have that $u(t)$ is continuous in
$\mathcal{D}'(\RR^3)$ with respect to $t\in [0,T^*]$. This implies that
\begin{equation}\label{est.8}
\|u_0\|_{L^2(K)}\leq \liminf_{t\to 0+}\|u(t)\|_{L^2(K)},\quad \forall \text{ compact subset} K\subset \RR^3.
\end{equation}
On the other hand, applying \eqref{eq.con} to \eqref{eq.loc'}, we deduce that for all
$t\in [0,T^*]$
\begin{equation*}
\begin{split}
&\int_{\RR^3}\varphi|u(t)|^2\,\mathrm{d}x+2\int^t_0\int_{\RR^3}
\varphi\big|\Lambda^{s}u(\tau)\big|^2\,\mathrm{d}x\mathrm{d}\tau\\
&\leq\int_{\RR^3}\varphi|u_0|^2\,\mathrm{d}x-\int^t_0\int_{\RR^3}[\tilde{\varphi},\Lambda^{s}]
\Lambda^su\cdot u\varphi\,\mathrm{d}x\mathrm{d}\tau
-\int^t_0\int_{\RR^3}\tilde{\varphi}\Lambda^su\cdot[\Lambda^{s},\varphi]
u\,\mathrm{d}x\mathrm{d}\tau\\
&\quad+\int^t_0\int_{\RR^3}|u|^2
u\cdot\nabla\varphi\,\mathrm{d}x\mathrm{d}\tau
+2\int^t_0\int_{\RR^3}P\cdot u\nabla\varphi\,\mathrm{d}x\mathrm{d}\tau
\end{split}
\end{equation*}
for all nonnegative $\varphi,\tilde{\varphi}\in \mathcal{D}(\RR^3)$ satisfying $\varphi\tilde{\varphi}=1$. Sending $t\to 0+$, obviously, we have that
\begin{equation}\label{est.9}
\limsup_{t\to 0+}\|u(t)\|_{L^2(K)}\leq \|u_0\|_{L^2(K)},\quad \forall \text{ compact subset} K\subset \RR^3.
\end{equation}
Collecting \eqref{est.8} and \eqref{est.9}, we get that
\[\lim_{t\to 0+}\|u(t)\|_{L^2(K)}= \| u_0\|_{L^2(K)},\quad \forall \text{ compact subset} K\subset \RR^3.\]
This, together with the weak convergence of $u(t)$ to $u_0$ in $L^2_{uloc}$, gives that
\[\lim_{t\to 0+}\|u(t)-u_0\|_{L^2(K)}=0,\quad \forall \text{ compact subset} K\subset \RR^3.\]

Summing up, we prove Theorem \ref{TH1}.

\section{Regularity of local Leray solutions with initial data
in $E^2$ }\label{sec.4}
\setcounter{section}{4}
\setcounter{equation}{0}

In this section, we mainly study the additional regularity of local Leray solutions when its initial data not only belong to $L^2_{uloc}$ but also vanish at infinity.
\begin{theorem}\label{th.decay}
Suppose that $u$ be a local Leray solution to
\eqref{NS}-\eqref{NSI} on $\RR^3\times (0,T)$ starting from $u_0\in E^2$ with $\Div u_0=0$. Let $\chi\in
C^{\infty}(\RR^3)$ satisfy
\[\chi=0\quad \text{in }B_1(0),\quad\chi=1\quad\text{in } B^c_2(0),\quad 0\leq \chi\leq
1.\]
Define $\chi_{R}(x)\triangleq\chi(\frac xR)$ and $M_{T'}\triangleq\|u\|_{L^{\infty}((0,T');L^2_{uloc})}+\|u\|_{L^{2,2}_{uloc}(T')}$ with $T'<T$. Then, for each $T'<T$, there exists a constant
$C_{s,T',M_{T'}}$ such that for all $t\in (0,T')$
\begin{equation}\label{eq.decay}
\|u(t,\cdot)\chi_{R}\|_{L^2_{uloc}}+\|\Lambda^s u\chi_{R}\|_{L^{2,2}(t)}\leq
C_{s,T',M_{T'}}\big(\|u_0\chi_{R}\|_{L^2_{uloc}}+R^{-1/4}\big).
\end{equation}
\end{theorem}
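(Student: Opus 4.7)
The plan is to apply the local energy inequality (1.6) with the spatial weight $\psi(x)=\phi_{x_0}^2(x)\chi_R^2(x)$ (together with a standard mollified time cutoff) where $\phi_{x_0}$ is a smooth bump equal to $1$ on $B_1(x_0)$ and supported in $B_2(x_0)$, and $x_0\in\RR^3$ is arbitrary. A convenient $\tilde\psi$ satisfying $\psi\tilde\psi=\psi$ is taken as $\tilde\phi_{x_0}^2\tilde\chi_R^2$ with supports only slightly larger. Introduce
\begin{align*}
\alpha_R(t) &= \sup_{x_0\in\RR^3}\int_{\RR^3}|u(t,x)|^2\phi_{x_0}^2\chi_R^2\,\mathrm{d}x,\\
\beta_R(t) &= \sup_{x_0\in\RR^3}\int_0^t\int_{\RR^3}|\Lambda^s u|^2\phi_{x_0}^2\chi_R^2\,\mathrm{d}x\,\mathrm{d}\tau.
\end{align*}
The goal is to derive a Gronwall-type inequality for $\alpha_R+\beta_R$ whose forcing has the form $\|u_0\chi_R\|^2_{L^2_{uloc}}+R^{-1/2}$, so that after taking square roots one obtains \eqref{eq.decay}.

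Each term on the right of \eqref{eq.loc} must then be estimated. The initial-data term gives $\|u_0\phi_{x_0}\chi_R\|_{L^2}^2\leq\|u_0\chi_R\|_{L^2_{uloc}}^2$. The cubic term $\int|u|^2 u\cdot\nabla\psi$ splits via Leibniz into a piece with $\nabla\phi_{x_0}$ and a piece with $\nabla\chi_R$: the first is localized in $B_2(x_0)$ and absorbable through the interpolation used in Step 2 of Section \ref{sec.3}, contributing $\int_0^t F(\alpha_R(\tau))\,\mathrm{d}\tau$ for a polynomial $F$; the second is supported in the transition annulus $\{R<|x|<2R\}\cap B_2(x_0)$ and carries the factor $R^{-1}$ from $\|\nabla\chi_R\|_{L^\infty}$, which combined with the global a priori bound $M_{T'}$ and a Cauchy--Schwarz split yields an $R^{-1/2}$ remainder. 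The pressure term is treated by the decomposition in Remark \ref{rem.1}: $P^1_{x_0,2}$, a Calder\'on--Zygmund transform of a compactly supported tensor, is controlled in $L^{3/2}_{t,x}$ exactly as in Section \ref{sec.3}, while $P^2_{x_0,2}$ is estimated pointwise on $B_2(x_0)$ by $M_{T'}^2$ using the kernel bound \eqref{eq.press1} and Lemma \ref{Lem.3-1}. The annular part of $\int P\,u\cdot\nabla\psi$ produces the same $R^{-1/2}$ loss.

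The genuinely new difficulty, which is the principal obstacle, lies in the two nonlocal commutator terms $[\tilde\psi,\Lambda^s]\Lambda^s u$ and $[\Lambda^s,\psi]u$. Lemma \ref{lem.com} controls them in $L^2_{uloc}$ by $M_{\tilde\psi}$ or $M_\psi$ times $\|\Lambda^s u\|_{L^2_{uloc}}$ or $\|u\|_{L^2_{uloc}}$, but these bounds involve the global mass and lose any localization by $\chi_R$, since $\Lambda^s$ transports information from $|y|\ll R$ into the support of $\psi$. To recover decay, the plan is to split the commutator kernel appearing in the proof of Lemma \ref{lem.com} at a scale comparable to $R$: the near-field part differs from multiplication by $\chi_R$ only by errors absorbable in $\alpha_R$, while the far-field part has a kernel whose $L^1$ norm is $O(R^{-s})$ (using $\min\{|x|,1\}/|x|^{3+s}$ integrated over $|x|\geq R$), and its contribution is $O(R^{-s}M_{T'}^2)$. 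Since $s\geq 3/4$, this exceeds the $R^{-1/2}$ budget. The condition $\psi\tilde\psi=\psi$ with $\tilde\psi$ barely larger than $\psi$ keeps the commutator norms $M_\psi,M_{\tilde\psi}$ absolute constants independent of $R$.

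Assembling all bounds produces a differential inequality of the form
\[
\alpha_R(t)+\beta_R(t) \leq 2\|u_0\chi_R\|^2_{L^2_{uloc}} + C_{s,T',M_{T'}}R^{-1/2} + C_{s,T',M_{T'}}\int_0^t\bigl(\alpha_R(\tau)+\alpha_R^{p}(\tau)\bigr)\,\mathrm{d}\tau
\]
for some $p>1$ (the power arising from the interpolation $\|u\|_{L^3}\lesssim\|u\|^{(2s-1)/(2s)}_{L^2}\|\Lambda^s u\|^{1/(2s)}_{L^2}$ as in Section \ref{sec.3}). Gronwall's inequality on $(0,T')$ then yields $\alpha_R(t)+\beta_R(t)\leq C_{s,T',M_{T'}}\bigl(\|u_0\chi_R\|^2_{L^2_{uloc}}+R^{-1/2}\bigr)$, and taking square roots gives \eqref{eq.decay}. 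The hardest step is controlling the commutators without losing all the $\chi_R$-decay; everything else parallels the local-in-time construction of Section \ref{sec.3} with the extra weight $\chi_R^2$ tracked through each estimate.
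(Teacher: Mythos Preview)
Your overall strategy---weight the local energy inequality by $\phi_{x_0}^2\chi_R^2$, track $\alpha_R,\beta_R$, and close by Gronwall---matches the paper, and your commutator plan is essentially correct: the paper handles $[\Lambda^s,\phi_{x_0}\chi_R^2]$ and $[\tilde\phi_{x_0},\Lambda^s]$ by iterated commutator identities that isolate $[\Lambda^s,\chi_R]$, whose operator norm is $O(R^{-s})$ by Lemma~\ref{lem.com} since $M_{\chi_R}\sim R^{-s}$.

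There is, however, a genuine gap in your pressure treatment. You bound $P^2_{x_0,2}$ pointwise on $B_2(x_0)$ by $M_{T'}^2$ and say $P^1_{x_0,2}$ is controlled in $L^{3/2}$ ``exactly as in Section~\ref{sec.3}''. But the dangerous part of the pressure integral is not the annular piece with $\nabla\chi_R$ (that one does get $R^{-1}$); it is
\[
\int_0^t\!\!\int P_{x_0,2}\,(u\cdot\nabla\phi_{x_0})\,\chi_R^2\,\mathrm{d}x\,\mathrm{d}\tau
=\int_0^t\!\!\int (\chi_R P_{x_0,2})\,(\chi_R u\cdot\nabla\phi_{x_0})\,\mathrm{d}x\,\mathrm{d}\tau.
\]
The pressure is nonlocal, so $\chi_R P_{x_0,2}$ has no decay a priori. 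With your bounds this term contributes $M_{T'}^2\int_0^t\alpha_R^{1/2}\,\mathrm{d}\tau$ for $P^2$ and $M_{T'}^2\gamma_R$ for $P^1$; after Young's inequality each leaves an $O(M_{T'}^{3}T')$ or $O(M_{T'}^{4}T')$ remainder with \emph{no} $R$-decay, and the Gronwall closure fails.

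The paper resolves this by proving $\chi_R$-weighted bounds on the pressure itself. For $\chi_R P^1_{x_0,2}$ one commutes $\chi_R$ through the Calder\'on--Zygmund operator on $B_8(x_0)$, paying $R^{-1}$ from $|\chi_R(x)-\chi_R(x_0)|$. For $\chi_R P^2_{x_0,2}$ the key device is to split the far-field integral at radius $2\sqrt{R}$: on $\{4\le|y-x_0|\le 2\sqrt R\}$ one uses $|\chi_R(x)-\chi_R(y)|\lesssim R^{-1/2}$, while on $\{|y-x_0|\ge 2\sqrt R\}$ the kernel bound and Lemma~\ref{Lem.3-1} give another $R^{-1/2}$; the remaining piece is $\int\chi_R(y)|k(x-y)-k(x_0-y)||u|^2\,\mathrm{d}y\lesssim M_{T'}\alpha_R^{1/2}$. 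This yields
\[
|\chi_R(x)P^2_{x_0,2}(x,t)|\lesssim R^{-1/2}M_{T'}^2+M_{T'}\,\alpha_R(t)^{1/2},
\]
and it is precisely this $\sqrt R$-split that produces the $R^{-1/4}$ in the theorem (i.e.\ $R^{-1/2}$ before taking square roots). You need to insert this step; everything else in your outline goes through.
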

\begin{proof}
First, we simplify the local energy inequality \eqref{eq.loc}. Let $\theta\in \mathcal{D}(\RR)$ be a nonnegative function satisfying $\int_{\RR} \theta\,\mathrm{d}x=1$ and $\mathop{\rm
supp}\theta\subset[-1,1]$. Define $\alpha_{\eta}(t)={1\over \eta}\int^t_{-\infty}
\theta({{\tau-t_0}\over \eta})-\theta({{\tau-t_1}\over \eta})\,\mathrm{d}\tau$ with $0<\eta<t_0<t_1$ and
$t_1+\eta<T$. It's clear that  $\alpha_{\eta}(t)\in \mathcal{D}((0,T))$. Set $\varphi\in
\mathcal{D}(\RR^3)$ to be a nonnegative function. Then, applying
$\psi(x,t)\triangleq \alpha_{\eta}(t)\varphi(x)$ to \eqref{eq.loc}, we get
that
\begin{align*}
2\int^t_0\int_{\RR^3} \alpha_{\eta}\varphi\big|\Lambda^s
u\big|^2\,\mathrm{d}x\mathrm{d}\tau\leq
       &\int^t_0\int_{\RR^3}\big|u\big|^2\partial_t\alpha_{\eta}\varphi\,\mathrm{d}x\mathrm{d}\tau
       -2\int^t_0\int_{\RR^3}\alpha_{\eta} [\tilde{\varphi},\Lambda^{s}]\Lambda^s u\cdot
       u\varphi\,\mathrm{d}x\mathrm{d}\tau\\
       &-2\int^t_0\int_{\RR^3}
       \alpha_{\eta}\tilde{\varphi}\Lambda^su\cdot [\Lambda^s,\varphi]u\,\mathrm{d}x\mathrm{d}\tau\\
       &+\int^t_0\int_{\RR^3}\alpha_{\eta}|u|^2
       u\cdot\nabla\varphi\,\mathrm{d}x\mathrm{d}\tau
       +2\int^t_0\int_{\RR^3}\alpha_{\eta}P_{x_0,2}
       u\cdot\nabla\varphi\,\mathrm{d}x\mathrm{d}\tau
\end{align*}
with $P_{x_0,2}$ dedined in \eqref{pess} with $r=2$ for all $\tilde{\varphi}\in \mathcal{D}(\RR^3)$ satisfing $\tilde{\varphi}\geq 0$ and $\tilde{\varphi}\varphi=\varphi$. In view of the definition of $\alpha_{\eta}$, we know that
\begin{equation}\label{est.17}
\partial_t\alpha_{\eta}=\tfrac1{\eta}\theta(\tfrac{t-t_0}{\eta})-\tfrac1{\eta}
\theta(\tfrac{t-t_1}{\eta}),\quad\lim_{\eta\rightarrow 0}\alpha_{\eta}(t)=\left\{
\begin{array}{ll}
0,&\quad 0\leq t<t_0\text{ or }t>t_1,\\
{1/2},&\quad t=t_0\text{ or }t=t_1,\\
1,&\quad t_0<t<t_1.
\end{array}\right.
\end{equation}
Assume that $t_0$ and $t_1$ are two Lebesgue points of the map $t\mapsto
\int_{\RR^3}|u(t)|^2\varphi\,dx$. Then, sending $\eta\to
0+$, in view of \eqref{est.17}, by Lebesgue Theorem and Lebesgue dominated converge Theorem, we obtain that
\begin{equation}\label{eq.loc3}
\begin{split}
&\int_{\RR^3}\varphi\big|u(t_1)\big|^2\,\mathrm{d}x+2\int^{t_1}_{t_0}\int_{\RR^3}
\varphi\big|\Lambda^s u\big|^2\,\mathrm{d}x\mathrm{d}\tau\\
&\leq\int_{\RR^3}\varphi\big|u(t_0)\big|^2\,\mathrm{d}x
-2\int^t_0\int_{\RR^3}[\tilde{\varphi},\Lambda^{s}]\Lambda^s u\cdot u\varphi+
\tilde{\varphi}\Lambda^su\cdot[\Lambda^s,\varphi]u\,\mathrm{d}x\mathrm{d}\tau\\
&\quad+\int^t_0\int_{\RR^3}|u|^2u\cdot\nabla\varphi\,\mathrm{d}x\mathrm{d}\tau+2\int^t_0
\int_{\RR^3}P_{x_0,2}u\cdot\nabla\varphi\,\mathrm{d}x\mathrm{d}\tau.
\end{split}
\end{equation}
Since $\lim_{t\to 0+}\|u(t)-u_0\|_{L^2_{loc}}=0$
and the map $t\rightarrow
u(\cdot,t)$ is weakly continuous, we say that \eqref{eq.loc3} holds for all $t_1\in
(0,T)$ and $t_0=0$.

Next, we will prove this theorem with the help of \eqref{eq.loc3}. According to the definition of local Leray solutions, for any $T'<T$, there exists a constant
$M_{T'}>0$, which depends only $T'$, such that
\[\|u\|_{L^{\infty}([0,T'];L^2_{uloc})}+\|\Lambda^s
u\|_{L^{2,2}_{uloc}(T')}+\|u\|_{L^{3,3}_{uloc}(T')}\leq C_{M_{T'}}.\]
Let $\phi_{x_0}$ and $\tilde{\phi}_{x_0}$ be the same functions defined in Step 2 in Section \ref{sec.3}. Applying
$\varphi\triangleq\phi_{x_0}\chi^2_{R}$ to \eqref{eq.loc3}, we get that for each $0<t\leq
T'$
\begin{align*}
&\int_{\RR^3}\phi_{x_0}|u(t)\chi_{R}|^2\,\mathrm{d}x+2\int^t_0\int_{\RR^3}
\phi_{x_0}|\chi_{R}\Lambda^{s}u|^2\,\mathrm{d}x\mathrm{d}\tau\\
&\leq\int_{\RR^3}\phi_{x_0}|u_0\chi_{R}|^2\,\mathrm{d}x-2\int^t_0\int_{\RR^3}\big([\tilde{\phi}_{x_0},
\Lambda^{s}]\Lambda^su \cdot u\phi_{x_0}\chi^2_{R}+\tilde{\phi}_{x_0}\Lambda^su
\cdot[\Lambda^{s},\phi_{x_0}\chi^2_{R}]u\big)\,\mathrm{d}x\mathrm{d}\tau\\
&\quad+\int^t_0\int_{\RR^3}|u|^2
u\cdot\nabla(\phi_{x_0}\chi_{R}^2)\,\mathrm{d}x\mathrm{d}\tau
+2\int^t_0\int_{\RR^3}P_{x_0,2} u\cdot
\nabla(\phi_{x_0}\chi_{R}^2)\,\mathrm{d}x\mathrm{d}\tau.
\end{align*}
We denote the last three parts on the right side of the above inequality by
$\text{I}$-$\text{III}$, respectively.
Set
\[\alpha_R(t)=\sup_{x_0\in\RR^3}\big(\int_{B_1(x_0)}|\chi_Ru(t)|^2\,dx\big)^{1\over
2},\quad \beta_R(t)=\sup_{x_0\in\RR^3}\big(\int^t_0\int_{B_1(x_0)}|\chi_R\Lambda^s
u|^2\,\mathrm{d}x\mathrm{d}\tau\big)^{1\over 2},\]
\[\gamma_R(t)=\sup_{x_0\in\RR^3}\big(\int^t_0\int_{B_1(x_0)}|
\chi_Ru|^3\,\mathrm{d}x\mathrm{d}\tau\big)^{1\over 3}.\]
It's obvious that $\alpha_R(t)\leq \|u(t)\|_{L^2_{uloc}}$ and $\beta_R(t)\leq \|\Lambda^su\|_{L^{2,2}(t)}$. Similar to \eqref{inter}, we have that
\begin{equation}\label{inter'}
\gamma^3_{R}(t)\lesssim_s
\beta^{\frac3{2s}}_{R}(t)\Big(\int^t_0\alpha^{\frac{6(2s-1)}{4s-3}}_{R}(\tau)
\,\mathrm{d}\tau\Big)^{1-\frac{3}{4s}}+\int^t_0\alpha^{3}_{R}(\tau)
\,\mathrm{d}\tau.
\end{equation}
as follows from Lemma \ref{lem.com} and the fact that
\begin{align*}
&\|\phi_{x_0}\chi_{R}u(t)\|_{L^3}
\leq \|\phi_{x_0}\chi_Ru\|^{1-1/(2s)}_{L^2}
\|\Lambda^s(\phi_{x_0}\chi_Ru)\|^{1/(2s)}_{L^2},\\
&\|\Lambda^s(\phi_{x_0}\chi_Ru)\|_{L^2}
\leq \|\phi_{x_0}\chi_R\Lambda^s u\|_{L^2}+\|\phi_{x_0}[\Lambda^s,\chi_R]u\|_{L^2}+\|[\Lambda^s,\phi_{x_0}]\chi_R u\|_{L^2}.
\end{align*}
Next, we deal with $\text{I}$-$\text{I}$ in turns. To estimate $\text{I}$, we observe that
\begin{align*}
\chi_R[\tilde{\varphi}_{x_0},\Lambda^{s}]
\Lambda^su&=\tilde{\varphi}_{x_0}[\chi_R,\Lambda^s]\Lambda^s u+[\tilde{\varphi}_{x_0},\Lambda^s]\chi_R\Lambda^s u+[\Lambda^s,\chi_R]\tilde{\varphi}_{x_0}\Lambda^su,\\
[\Lambda^s,\varphi_{x_0}\chi^2_R]u&=[\Lambda^s,\chi_R]\varphi_{x_0}\chi_Ru+\chi_R[\Lambda^s,
\varphi_{x_0}]\chi_R u+\chi_R\varphi_{x_0}[\Lambda^s,
\chi_R]u.
\end{align*}
Hence, by Lemma \ref{lem.com} and Young's inequality, we deduce that
\begin{equation}\label{est.10}
\begin{split}
|\text{I}|\leq&C_s\big(\beta_R(t)+R^{-s}\beta(t)\big)\Big(\int^t_0\alpha_R^2(\tau)
\,\mathrm{d}\tau\Big)^{\frac12}\\
&+C_s\beta_R(t)\Big(\int^t_0\big(\alpha_R^2(\tau)+R^{-2s}\alpha^2(\tau)\big)
\,\mathrm{d}\tau\Big)^{\frac12}\\
\leq&
C_{s,T',M_{T'}}R^{-s}+\frac14\beta^2_R(t)+C_s\int^t_0\alpha^2_R(\tau)\,\mathrm{d}\tau.
\end{split}
\end{equation}
For $\text{II}$, by H\"{o}lder's inequality, we get that
\begin{equation}\label{est.11}
\begin{split}
|\text{II}|\leq&C\|u\|_{L^3([0,t];L^3(B_2(x_0)))}\big(\|u\chi_R\|^2_{L^3([0,t];L^3(B_2(x_0)))}
+R^{-1}\|u\|^2_{L^3([0,t];L^3(B_2(x_0)))}\big)\\
\leq &C_{M_{T'}}\gamma_R^{2}(t)+C_{M_{T'}}R^{-1}.
\end{split}
\end{equation}
Finally, we consider $\text{III}$. Since $\Div u=0$, we rewrite $\text{III}=\text{III.1}+\text{III.2}$ with
\[\text{III.1}\triangleq\int^t_0\int_{\RR^3}P_{x_0,2}
\big(u\cdot\nabla\varphi_{x_0}\big)\chi_{R}^2\,\mathrm{d}x\mathrm{d}\tau,
\quad\text{III.2}\triangleq2\int^t_0\int_{\RR^3}P_{x_0,2}
\big(u\cdot\nabla\chi_{R}\big)\varphi_{x_0}\chi_{R}\,\mathrm{d}x\mathrm{d}\tau.\]
By \eqref{eq.press1}, Lemma \ref{Lem.3-1}, H\"{o}ler's inequality and Calderon-Zygmund estimates, we have that
\begin{equation}\label{est.12}
\begin{split}
|\text{III.2}|\leq& CR^{-1}\|P^1_{x_0,2}\|_{L^{\frac32}([0,t];L^{\frac32}(B_2(x_0)))}
\|u\|_{L^3([0,t];L^3(B_2(x_0)))}\\
&+R^{-1}\int^t_0\|P^2_{x_0,2}\|_{L^{2}(B_2(x_0)))}
\|u\|_{L^2(B_2(x_0))}\,\mathrm{d}\tau\\
\leq&CR^{-1}\|u\|^3_{L^{3,3}_{uloc}(t)}+tR^{-1}\|u\|^3_{L^{\infty}([0,t];L^2_{uloc})}\leq
C_{T',M_{T'}}R^{-1}.
\end{split}
\end{equation}
To estimate $\text{III.1}$, we observe that
\begin{align*}
\chi_{R}(x) P^1_{x_0,2}(x)=&-(\chi_{R}(x)-\chi_{R}(x_0)[\tfrac{\Div\Div}{\Delta}u\otimes
u\psi_{x_0}](x)\\
&[\tfrac{\Div\Div}{\Delta}u\otimes u(\chi_{R}(x_0)-\chi_{R})\psi_{x_0}](x)-[\tfrac{\Div\Div}{\Delta} u\otimes u\psi_{x_0}\chi_{R}](x),
\end{align*}
Thus, by the mean value theorem, H\"{o}lder's inequality and Calderon-Zygmund estimates, we
obtain that
\begin{equation}\label{est.13}
\|\chi_{R} P^1_{x_0,2}\|_{L^{\frac32}(B_2(x_0))}\leq CR^{-1}\|u\|^2_{L^3(B_8(x_0))}+C\|u\|_{L^3(B_8(x_0))}\|\chi_Ru\|_{L^3(B_8(x_0))}.
\end{equation}
For $\chi_{R}(x) P^2_{x_0,2}(x)$, we adapt the following decomposition:
\begin{align*}
&\chi_{R}(x) P^2_{x_0,2}(x,t)\\
&=-\int_{|y-x_0|\leq 2\sqrt{R}}\big(k(x-y)-k(x_0-y)\big)(\chi_R(x)-\chi_R(y))
(1-\psi_{x_0}(y))(u\otimes u)(y,t)\,dy\\
&\quad -\int_{|y-x_0|\leq 2\sqrt{R}}\big(k(x-y)-k(x_0-y)\big)\chi_R(y)(1-\psi_{x_0}(y))(u\otimes
u)(y,t)\,dy\\
&\quad-\chi_R(x)\int_{|y-x_0|\geq 2\sqrt{R}}\big(k(x-y)-k(x_0-y)\big)
(1-\psi_{x_0}(y))(u\otimes u)(y,t)\,dy.
\end{align*}
Hence, using the mean value theorem, we
get that for all $x\in B_2(x_0)$,
\begin{align*}
|\chi_{R}(x) P^2_{x_0,2}(x,t)|\leq &C\frac1{\sqrt{R}}\int_{4\leq |y-x_0|\leq
2\sqrt{R}}|k(x-y)-k(x_0-y|
|u(y,t)|^2\,dy\\
&\quad+\int_{4\leq |y-x_0|\leq 2\sqrt{R}}\chi_R(y)|k(x-y)-k(x_0-y)||u(y,t)|^2\,dy\\
&\quad+\int_{|y-x_0|\geq 2\sqrt{R}}|k(x-y)-k(x_0-y)||u(y)|^2\,dy.
\end{align*}
Hence, in view of \eqref{eq.press1}, by Lemma \ref{Lem.3-1}, we deduce that for all $x\in
B_2(x_0)$
\begin{equation}\label{est.14}
|\chi_{R}(x) P^2_{x_0,2}(x,t)|\leq
CR^{-1/2}\|u(t)\|^2_{L^2_{uloc}}+\alpha_R(t)\|u(t)\|_{L^2_{uloc}}.
\end{equation}
Collecting \eqref{est.13} and \eqref{est.14}, we obtain that
\begin{equation}\label{est.15}
\begin{split}
|\text{III.1}|\leq&\int^t_0\|\chi_R P^1_{x_0,2}(\tau)\|_{L^{3/2}(B_2(x_0))}\|\chi_R
u(\tau)\|_{L^3(B_2(x_0))}\,\mathrm{d}\tau\\
&+\int^t_0R^{-1/2}\|u(\tau)\|^3_{L^2_{uloc}}+\alpha_R^2(\tau)
\|u(\tau)\|_{L^2_{uloc}}\,\mathrm{d}\tau\\
\leq &C_{T',M_{T'}}R^{-1/2}+C_{M_{T'}}\gamma^2_R(t)+C_{M_{T'}}\int^t_0\alpha_R^2(\tau)
\,\mathrm{d}\tau.
\end{split}
\end{equation}
Summing \eqref{est.10}-\eqref{est.12} and \eqref{est.15}, we have that for all
$t\in[0,T']$
\begin{equation}\label{est.16}
\alpha^2_R(t)+2\beta^2_{R}(t)
\leq 2\alpha^2_R(0)+C_{s,T',M_{T'}}R^{-1/2}+2^{-1}\beta^2_R(t)+C_{s,M_{T'}}\int^t_0\alpha^2_R(\tau)\,\mathrm{d}\tau+C_{M_{T'}}\gamma_R^{2}(t).
\end{equation}
 Substituting \eqref{inter'} into \eqref{est.16} and then using H\"{older}'s inequality, we obtain that
\begin{align*}
\alpha^2_R(t)+\beta^2_{R}(t)
\leq &2\alpha^2_R(0)+C_{s,T',M_{T'}}R^{-1/2}+C_{s,M_{T'}}
\Big(\int^t_0\alpha^{\frac{6(2s-1)}{4s-3}}_{R}(\tau)
\,\mathrm{d}\tau\Big)^{\frac{4s-3}{3(2s-1)}}.
\end{align*}
This yields that
\begin{align*}
\alpha^{\frac{6(2s-1)}{4s-3}}_R(t)
\leq& 2\alpha^{\frac{6(2s-1)}{4s-3}}_R(0)+C_{s,T',M_{T'}}R^{-\frac{3(2s-1)}{2(4s-3)}}+C_{s,M_{T'}}
\int^t_0\alpha^{\frac{6(2s-1)}{4s-3}}_{R}(\tau)
\,\mathrm{d}\tau.
\end{align*}
By Gronwall's inequality, we get that, for all $t\leq T'$,
\begin{equation*}
\alpha^{\frac{6(2s-1)}{4s-3}}_R(t)\leq
\Big(2\alpha^{\frac{6(2s-1)}{4s-3}}_R(0)+C_{s,T',M_{T'}}R^{-\frac{3(2s-1)}{2(4s-3)}}\Big)e^{C_{s,M_{T'}}T'}.
\end{equation*}
This implies that \eqref{eq.decay}. Then we complete the proof of Theorem \ref{th.decay}.
\end{proof}
Next, we will use Theorem \ref{th.decay} to prove Theorem \ref{TH1-1}.
\begin{proof}[Proof of Theorem \ref{TH1-1}]
It's obvious that for all $T'<T$, $u\in L^{\infty}([0,T'];E^2)$
and $\Lambda^s u\in G^{2,2}(T')$ as follows from Theorem \ref{th.decay}. Thus, in what follows, we focus on the proof of \eqref{est-1-1}.

Let $\mathcal{Q}_{L}$ be the set of all Lebesgue points in $(0,T)$. Since $u(t)$ is continuous in $\mathcal{D}'(\RR^3)$ with respect to $t\in [0,T']$, and
$u\in L^{\infty}([0,T'];E^2)$ for any $T'<T$, we
obtain that for all $t_0\in \mathcal{Q}_{L}$
\[\|u(t_0)\|_{L^2(K)}\leq \liminf_{t_1\to t_0+}\|u(t_1)\|_{L^2(K)},\quad \forall  K\subset\RR^3 \]
where $K$ denotes the compact set here and in what follows. On the other hand, the inequality \eqref{eq.loc3} tells us that for all $t_0\in \mathcal{Q}_{L}$
\[\limsup_{t_1\to t_0+}\|u(t_1)\|_{L^2(K)}\leq \|u(t_0)\|_{L^2(K)},\quad \forall K\subset \RR^3.\]
Thus, we get that for all $t_0\in \mathcal{Q}_{L}$
\[\lim_{t_1\to t_0+}\|u(t_1)\|_{L^2(K)}= \|u(t_0)\|_{L^2(K)},\quad \forall\, K\subset \RR^3.\]
This, together with the weak continuity of the map $t\rightarrow u(\cdot,t)$ and
the fact that
\[\lim_{t_1\to 0+}\|u(t_1)-u(0)\|_{L^2(K)}=0,\quad \forall\, K\subset \RR^3,\]
we obatin that for all $t_0\in \mathcal{Q}_{L}\cup\{0\}$
\begin{equation}\label{est.19}
\lim_{t_1\to t_0+}\|u(t_1)-u(t_0)\|_{L^2(K)}=0,\quad \forall\, K\subset \RR^3.
\end{equation}
In addition, since $u\in L^{\infty}([0,T'];E^2)$ which implies that
\begin{equation}\label{est.20}
\lim_{|x_0|\to\infty}\|u\|_{L^{\infty}([0,T'];L^2(B_1(x_0)))}=0,\quad \forall T'<T,
\end{equation}
collecting \eqref{est.19} and \eqref{est.20}, we deduce \eqref{est-1-1}. So we complete the proof of Theorem \ref{TH1-1}.
\end{proof}
In the light of Theorem \ref{TH1-1}, for each local Leray solution $u$ to \eqref{NS}-\eqref{NSI} with initial data in $E^2$, we can define its harmonic extension $u^*$:
\begin{equation}\label{HE1}
u^*(x,y)=C_{s}\int_{\RR^3}\frac{y^{1-2s}}{(|x-\xi|^2+|y|^2)^{\frac{3+2s}2}}u(\xi)
\,\mathrm{d}\xi.
\end{equation}
According to \cite{CS07}, such $u^*$ satisfies
\begin{equation}\label{HE}
\begin{cases}
\text{Div}(y^{1-2s}\overline{\nabla}u^*)=0,\quad(x,y)\in \RR^3\times\RR^+,\\
u^*(x,0)=u(x),\quad x\in \RR^3,\\
-C_s\lim_{y\to 0+}y^{1-2s}\partial_y u^*=(-\Delta)^{s}u(x),\quad x\in\RR^3.
\end{cases}
\end{equation}
Hence, the local energy inequality \eqref{eq.loc} for $u\in L^{\infty}((0,T);E^2)$ and $\Lambda^s u\in G^{2,2}(T)$ is equivalent to the following inequality
\begin{equation}\label{eq.loc1}
\begin{split}
&\int_{\RR^3\times\{t\}}|u|^2\psi\,\mathrm{d}x+2C_s\int^t_0\int_{\RR^3\times\RR_+}y^{1-2s}
|\overline{\nabla}u^*|^2\psi\,\mathrm{d}\bar{x}\mathrm{d}\tau\\
&\leq C_s\int^t_0\int_{\RR^3\times\RR_+}|u^*|^2\mathrm{Div}(y^{1-2s}\overline{\nabla}\psi)\,
\mathrm{d}\bar{x}\mathrm{d}\tau+\int^t_0\int_{\RR^3}(u\cdot\nabla \psi)(2P+|u|^2)\mathrm{d}x\mathrm{d}\tau\\
&\quad+\int^t_0\int_{\RR^3}|u|^2\big(\partial_t\psi+C_s\lim_{y\to 0+}(y^{1-2s}\partial_y\psi)\big)\mathrm{d}x\mathrm{d}\tau
\end{split}
 \end{equation}
for any nonnegative function $\psi\in \mathcal{D}(\RR^3\times (0,T))$. Note that, the $\psi$ of the integration on $\RR^3\times\RR^+$ in \eqref{eq.loc1} should be understood as any function in $H^1(\RR^3\times \RR^+,y^{1-2s})$ which equals $\varphi$ on $\RR^3\times\{t=0\}$.

Based on above analysis, we can give another version of suitable weak solution which is a generalization of the version introduced in \cite{TY15}.
\begin{definition}\label{def.suit}
Let $T>0$ and $s\in [\frac34,1)$. We call $(u,P)$ is a suitable weak solution to \eqref{NS} in $\RR^3\times (0,T)$, if the following conditions are satisfied:
\begin{enumerate}
  \item[$(\mathrm{i})$] $u\in L^{\infty}((0,T);E^2)$, $\Lambda^su\in G^2(0,T)$ and $P\in L^{3/ 2}((0,T);E^{3/2});$\smallskip

  \item[$(\mathrm{ii})$] $u$ and $P$ satisfy \eqref{NS} in the sense of distribution on $\RR^3\times (0,T)$;\smallskip

  \item[$(\mathrm{iii})$] $u$ and $P$ satisfy the generalized local energy inequality \eqref{eq.loc} on $(0,T)$ for any nonnegative function $\psi\in \mathcal{D}(\RR^3\times (0,T))$.
\end{enumerate}
\end{definition}
\begin{remark}
In the light of Theorem \ref{TH1-1} and Definition \ref{def.suit}, we say that the local Leray solution with initial data in $E^2$ is a suitable weak solution.
\end{remark}
\begin{proposition}\label{pro.CKN}
Let $s\in [3/4,1)$. If $(u,P)$ is a suitable weak solution to \eqref{NS} on $\RR^3\times (0,T)$, then there exists a positive small constant $\varepsilon_0$, only depending on $s$, such that if
\begin{equation}\label{cond.1}
r^{4s-6}\int_{Q_r(x_0,t_0)}|u|^3+|P|^{\frac32}\,\mathrm{d}x
\mathrm{d}\tau<\varepsilon_0,
\end{equation}
then there exists a positive constant $C$ such that
\[\sup_{z\in Q_{\frac{r}2(x_0,t_0)}}| u|\leq Cr^{-1}.\]
\end{proposition}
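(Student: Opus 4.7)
The plan is to reduce the statement to the harmonic-extension-based $\varepsilon$-regularity theorem established for the fractional Navier-Stokes equations in \cite{TY15} (for $s\in(3/4,1)$) and in \cite{RWW16} (at the endpoint $s=3/4$). The key point is that a suitable weak solution in the sense of Definition \ref{def.suit} fits exactly into their framework, after which the conclusion follows from their pointwise bound together with the natural parabolic scaling of \eqref{NS}.

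First I would check admissibility. By condition (i) of Definition \ref{def.suit}, $u\in L^{\infty}((0,T);E^{2})$ and $\Lambda^{s}u\in G^{2,2}(0,T)$, so the harmonic extension $u^{*}$ in \eqref{HE1} lies in the weighted space $H^{1}(\RR^{3}\times\RR^{+},y^{1-2s})$ locally in time and satisfies \eqref{HE}. The discussion preceding Definition \ref{def.suit} then shows that \eqref{eq.loc} is equivalent to the extended local energy inequality \eqref{eq.loc1} for $u^{*}$ on $\RR^{3}\times\RR^{+}\times(0,T)$. This places $(u,P,u^{*})$ precisely in the setting of the CKN-type $\varepsilon$-regularity machinery of \cite{TY15,RWW16}.

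Next I would exploit the fractional scaling. Setting
\[
u_{r}(x,t)=r^{2s-1}u(x_{0}+rx,\,t_{0}+r^{2s}t),\qquad P_{r}(x,t)=r^{4s-2}P(x_{0}+rx,\,t_{0}+r^{2s}t),
\]
the pair $(u_{r},P_{r})$ is again a suitable weak solution on $Q_{1}(0,0)$, its harmonic extension is $u_{r}^{*}(x,y,t)=r^{2s-1}u^{*}(x_{0}+rx,ry,t_{0}+r^{2s}t)$, and condition \eqref{cond.1} becomes
\[
\int_{Q_{1}(0,0)}|u_{r}|^{3}+|P_{r}|^{3/2}\,\mathrm{d}x\mathrm{d}\tau=r^{4s-6}\int_{Q_{r}(x_{0},t_{0})}|u|^{3}+|P|^{3/2}\,\mathrm{d}x\mathrm{d}\tau<\varepsilon_{0}.
\]
Applying the $\varepsilon$-regularity theorem of \cite{TY15,RWW16} to $(u_{r},P_{r})$ on $Q_{1}(0,0)$ yields $\sup_{Q_{1/2}(0,0)}|u_{r}|\leq C$, and rescaling back gives the pointwise bound on $Q_{r/2}(x_{0},t_{0})$ claimed in the proposition.

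The main obstacle is the rigorous equivalence between the intrinsic local energy inequality \eqref{eq.loc}, which is phrased in terms of the nonlocal commutators $[\Lambda^{s},\psi]$ and $[\tilde{\psi},\Lambda^{s}]\Lambda^{s}u$, and the extended weighted inequality \eqref{eq.loc1} on $\RR^{3}\times\RR^{+}$. This requires (a) justifying the trace identity $-C_{s}\lim_{y\to 0+}y^{1-2s}\partial_{y}u^{*}=\Lambda^{2s}u$ in a weak sense sufficient for testing against $\psi\in\mathcal{D}(\RR^{3}\times(0,T))$, (b) controlling the cross terms that arise when the test function $\psi$ is extended harmonically or by a cutoff into $\RR^{3}\times\RR^{+}$, and (c) matching the commutator contributions in \eqref{eq.loc} with the weighted Dirichlet integrals on the extension side through the estimates of Lemma \ref{lem.com} and Corollary \ref{Cor.com}. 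The decay guaranteed by $u\in L^{\infty}((0,T);E^{2})$ together with $\Lambda^{s}u\in G^{2,2}(0,T)$, ensured here by Theorem \ref{TH1-1}, is what allows all boundary terms at $|x|=\infty$ and $y=\infty$ to be discarded. Once this equivalence is in hand, the application of the cited $\varepsilon$-regularity theorem is routine and yields the desired pointwise bound.
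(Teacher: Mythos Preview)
Your proposal is essentially correct and matches the paper's own treatment: the paper does not give a proof of Proposition~\ref{pro.CKN} at all, but only a remark deferring to \cite{TY15,RWW16} and stating that ``following the method used in \cite{RWW16}, we can prove Proposition~\ref{pro.CKN} for all $s\in[3/4,1)$'', declining to reproduce the ``purely mechanical procedure''. Your outline (verify that the pair $(u,P,u^{*})$ satisfies the extended local energy inequality \eqref{eq.loc1}, rescale to unit scale, and invoke the $\varepsilon$-regularity machinery of \cite{TY15,RWW16}) is exactly the intended route.

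Two small refinements. First, the paper is careful to note that \cite{TY15} proved $\varepsilon$-regularity for $s\in(3/4,1)$ under the \emph{different} smallness condition \eqref{cond.2} on $\overline{\nabla}u^{*}$, not under \eqref{cond.1}, and that \cite{RWW16} treats only the endpoint $s=3/4$. So after rescaling you cannot literally quote a theorem from either reference; you must rerun the iteration argument of \cite{RWW16} with condition \eqref{cond.1} as the input, uniformly in $s\in[3/4,1)$. This is what the paper means by ``following the method'' rather than ``applying the theorem'', though as the paper says it requires no new ideas. Second, the equivalence between \eqref{eq.loc} and \eqref{eq.loc1} that you identify as the ``main obstacle'' is already asserted in the text preceding Definition~\ref{def.suit} (granted the decay from Theorem~\ref{TH1-1}), so for the purposes of this proposition it is a hypothesis rather than a gap; your discussion of points (a)--(c) is therefore more cautious than the paper itself.
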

\begin{remark}
In the range $s\in (3/4,1)$, Proposition \ref{pro.CKN} was proved in \cite{TY15} under the following condition replacing \eqref{cond.1},
\begin{equation}\label{cond.2}
\limsup_{r\to 0+}r^{-5+4s}\int_{Q^*_r(x_0,t_0)}y^{1-2s}|\overline{\nabla}u^*|^2\,\mathrm{d}\bar{x}
\mathrm{d}\tau<\varepsilon_1.
\end{equation}
Here $Q^*_r(x_0,t_0)=B_r(x_0)\times(0,r)\times(t_0- r^{2s},t_0)$ and $\varepsilon_1$ is the constant in Theorem 1.2 in \cite{TY15}. Later, Ren, Wang and Wu in \cite{RWW16} proved Proposition \ref{pro.CKN} at the end point case $s=3/4$. Following the method used in \cite{RWW16}, we can prove Proposition \ref{pro.CKN} for all $s\in [3/4,1)$. Since this works without any additional tricks, we feel that it is
not necessary to reproduce this purely mechanical procedure here in detail.
\end{remark}
Based on Proposition \ref{pro.CKN}, we prove the local Leray solutions to \eqref{NS}-\eqref{NSI} with initial data in $E^2$ satisfy the following additional regularity.
\begin{proposition}\label{pro.regu}
Let $u$ be a local Leray solution to \eqref{NS} on $\RR^3\times (0,T)$ starting from
$u_0\in E^2$ with $\Div u_0=0$. Then $u$ satisfies the following regularity
results:
\begin{enumerate}
  \item $u$ is regular in $B^c_R$ away from $t=0$ for some large enough $R>0$;
  \item $\Lambda^s u\in L^{2}([\delta,t];E^2)$ for all $0<\delta<t<T$.
\end{enumerate}
\end{proposition}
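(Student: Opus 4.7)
\textbf{Proof plan for Proposition \ref{pro.regu}.} The strategy is to verify the $\varepsilon$-regularity hypothesis \eqref{cond.1} of Proposition \ref{pro.CKN} at every space-time point $(x_0, t_0)$ with $|x_0|$ sufficiently large, uniformly in $t_0 \in (0, T']$ for a fixed $T' < T$. Once \eqref{cond.1} holds at some fixed scale $r$, Proposition \ref{pro.CKN} yields $\sup_{Q_{r/2}(x_0, t_0)} |u| \le C r^{-1}$, and a standard parabolic bootstrap in \eqref{NS} upgrades this to smoothness on $B_R^c \times (0, T')$ for some large $R$, giving part (1).

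To control the $u$-contribution in \eqref{cond.1}, I would appeal to Theorem \ref{th.decay}. Since $u_0 \in E^2$ forces $\|u_0 \chi_{R_0}\|_{L^2_{uloc}} \to 0$ as $R_0 \to \infty$, the estimate \eqref{eq.decay} produces, for any prescribed $\eta > 0$, a radius $R_0$ with
\[
\sup_{t \in (0, T')} \|u(t)\chi_{R_0}\|_{L^2_{uloc}} + \|\Lambda^s u\,\chi_{R_0}\|_{L^{2,2}_{uloc}(T')} < \eta.
\]
For $|x_0| > 3R_0$ one has $\chi_{R_0} \equiv 1$ on $B_8(x_0)$, so the same interpolation as in \eqref{inter'}, namely $\|\phi_{x_0} u\|_{L^3} \lesssim \|\phi_{x_0} u\|_{L^2}^{1-1/(2s)} \|\Lambda^s(\phi_{x_0} u)\|_{L^2}^{1/(2s)}$ together with Corollary \ref{Cor.com}, gives smallness of $\int_{Q_r(x_0, t_0)} |u|^3 \,\mathrm{d}x\mathrm{d}\tau$ at any fixed $r \le 1$. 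For the pressure, I would use the decomposition from Remark \ref{rem.1} centred at $x_0$: Calder\'on--Zygmund controls $\|P^1_{x_0, r}\|_{L^{3/2}}$ by $\|u\otimes u\,\psi_{x_0}\|_{L^{3/2}}$, which is small because $\psi_{x_0}$ is supported in $B_{R_0}^c$; and \eqref{eq.press1} with Lemma \ref{Lem.3-1} produces a pointwise bound on $B_2(x_0)$ for $P^2_{x_0, r}$ by something like $\|u(\tau)\chi_{R_0}\|_{L^2_{uloc}} \|u(\tau)\|_{L^2_{uloc}} + R_0^{-1/2}\|u(\tau)\|_{L^2_{uloc}}^2$, which is also small. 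Choosing $\eta$ smaller than $\varepsilon_0$ then gives \eqref{cond.1} and hence part (1).

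For part (2), once $u$ is known to be smooth on $B_R^c \times [\delta, T']$ and $u(\tau) \in E^2$ for every $\tau$ (by Theorem \ref{TH1-1}), I would estimate $\Lambda^s u(x, \tau)$ on $B_1(x_0)$ with $|x_0| \gg R$ via the principal-value representation, splitting into a near-field piece (bounded using pointwise smoothness of $u$ on a ball inside $B_R^c$) and a far-field piece (bounded using $u(\tau) \in E^2$ together with the $|x-y|^{-(3+s)}$ kernel decay and the auxiliary Lemma \ref{Lem.3-1}). This shows $\Lambda^s u(\tau) \in E^2$ for each $\tau \in [\delta, T']$, and combined with the uniform-in-$R$ $L^2$-in-time control from Theorem \ref{th.decay} one obtains $\Lambda^s u \in L^2([\delta, t]; E^2)$.

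The main obstacle is the far-field piece $P^2_{x_0, r}$ of the pressure: non-locality prevents $P$ from decaying merely from a local smallness of $u$, since $P$ depends on $u \otimes u$ everywhere. The kernel cancellation \eqref{eq.press1} together with the base-point decomposition of Remark \ref{rem.1} is precisely the mechanism that makes this term fall off as $R_0 \to \infty$, and aligning the cutoff $\psi_{x_0}$ with the region where $\chi_{R_0}\equiv 1$ is the key bookkeeping step.
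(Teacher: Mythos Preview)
Your treatment of part (1) matches the paper's: apply Theorem \ref{th.decay} to make the local $L^3$-norm of $u$ small on far-away balls, handle the pressure via the decomposition \eqref{pess} (Calder\'on--Zygmund for the local piece, the kernel bound \eqref{eq.press1} plus Lemma \ref{Lem.3-1} for the tail), then invoke Proposition \ref{pro.CKN}. Your pressure bookkeeping is in fact more explicit than the paper's one-line ``by interpolation''.

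For part (2) your route diverges from the paper's, and the last step does not close. The conclusion $\Lambda^s u\in L^2([\delta,t];E^2)$ requires
\[
\int_\delta^t \Bigl(\sup_{x_0}\|\Lambda^s u(\tau)\|_{L^2(B_1(x_0))}\Bigr)^2\,\mathrm{d}\tau<\infty,
\]
i.e.\ the supremum over $x_0$ must be taken \emph{before} the time integral. What Theorem \ref{th.decay} supplies is the $L^{2,2}_{uloc}$ quantity $\sup_{x_0}\int_\delta^t\|\Lambda^s u(\tau)\|_{L^2(B_1(x_0))}^2\,\mathrm{d}\tau$, with the order reversed; knowing this together with $\Lambda^s u(\tau)\in E^2$ for each fixed $\tau$ is not enough to interchange the two. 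The paper recognises exactly this point: it reduces part (2) to the claim $\Lambda^s u\in L^2([\delta,t];L^2_{uloc})$ and then works hard to obtain a genuinely \emph{space-uniform} bound. It writes $u$ via the Duhamel formula (Theorem \ref{TH6-1}), splits $u=u_1+u_2$ according to a cut-off at radius $\sim R$, and bootstraps in Besov spaces: kernel decay gives $u_1\in L^\infty_t W^{1,\infty}(B_{6R}^c)$, while the $L^\infty$ bound from part (1) feeds a two-step $B^{1/2}_{\infty,\infty}\to B^{1}_{\infty,\infty}$ iteration for $u_2$. This yields $\Lambda^s u\in L^\infty([\delta,t];L^\infty(B_{R'}^c))$ for some $R'$, which, combined with the $L^{2,2}_{uloc}$ control on the fixed compact region $B_{R'}$, produces the required $L^2_t L^2_{uloc}$ bound.

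Your singular-integral argument could be repaired along these lines: if your ``standard parabolic bootstrap'' in part (1) is carried out quantitatively to give a uniform bound $\|\nabla u\|_{L^\infty(B_R^c\times[\delta,T'])}\le C$ (not merely qualitative smoothness), then your near-field/far-field split of the principal-value integral does yield $\|\Lambda^s u(\tau)\|_{L^\infty(B_{R'}^c)}\le C$ uniformly in $\tau$, and the argument closes exactly as in the paper. But that uniform derivative bound is the substance of the matter, and appealing to Theorem \ref{th.decay} at the end does not supply it.
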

\begin{proof}
By Theorem
\ref{th.decay}, we know that $u\in L^{\infty}((0,t);E^2)$ and $\Lambda^s u\in G^{2,2}(t)$ for any $t<T$. Hence, by the interpolation, there exists a $R>0$, such that for all
$|x_0|\geq R$ and $t_0\in (0,t]$
\[\int_{Q_r(x_0,t_0)}|u|^3+|P-P_{x_0}|^{\frac32}\,\mathrm{d}x\mathrm{d}\tau<r^{6-4s}\varepsilon_0\]
where $r^2=\min\{(T-t_0)/2,t_0/2\}$ and $\varepsilon_0$ is the constant given in Proposition
\ref{pro.CKN}. Using Proposition \ref{pro.CKN}, we get that $\|u\|_{L^{\infty}(B^c_{R}\times [\delta,t])}\leq C$ for all $0<\delta<t$. Then the first part of Proposition \ref{pro.regu} is proved.

Next, we prove the second part of this proposition. Since $u\in L^{\infty}((0,t);E^2)$ and $\Lambda^s u\in G^{2,2}(t)$ for any $t<T$, we have that for almost every $t_0\in (0,T)$
 \[\lim_{|x_0|\to \infty}\|\Lambda^s u(t_0)\|_{L^2(B_1(x_0))}=0.\]
Thus, in the definition of $E^2$, it's sufficient to prove that the \textbf{claim} that $\Lambda^s u\in L^2([\delta,t];L^2_{uloc})$. Hence, in what follows, we will prove the \textbf{claim}. 

First, invoking the fact that $\|u\|_{L^{\infty}(B^c_{R}\times [\delta,t])}\leq C$ for all $0<\delta<t$,  with the help of  nonhomogeneous Besov space $B^{s}_{p,q}$ in $\RR^3$ defined in \cite{BCD11,MWZ12}, by bootstrap, we will prove a improved regularity of $u$. In the light of Theorem \ref{TH6-1}, we have that for all $0<t_0\leq t<T$
\begin{align*}
u(t,x)=e^{-(t-t_0)\Lambda^{2s}}u(t_0)+\int^t_{t_0}\Div e^{-(t-\tau)\Lambda^{2s}}
\mathbb{P}(u\otimes u)(\tau)\,\mathrm{d}\tau.
\end{align*}
Let $\varphi_r\in\mathcal{D}(\RR^3)\,(r\geq 1)$ satisfy $0\leq \varphi_r\leq 1$, $\varphi_r=1$ in $B_{3r/2}$ and $\varphi_r=1$ in $B^c_{2r}$. We split $u$ into $u_1+u_2$ where
\[u_i(t,x)=e^{-(t-t_0)\Lambda^{2s}}\phi_i u(t_0)+\int^t_{t_0}\Div e^{-(t-\tau)\Lambda^{2s}}
\mathbb{P}(\phi_i u\otimes u)(\tau)\,\mathrm{d}\tau,\,\,\,\, \phi_1=1-\phi_2,\,\phi_2=(1-\varphi_r)^2.\]
Then, by Lemma \ref{lem.G_t} and Lemma \ref{lem.O_t}, we have that for any $|x|\geq 3r$ and $0<t_0\leq t<T$,
\begin{align*}
|u_1(t,x)|&\lesssim_s\int_{|y|\leq 2r}\frac{t}{|x-y|^{3+2s}}|u(t_0,y)|\,\mathrm{d}y+\int^t_{t_0}
\int_{|y|\leq 2r}\frac{1}{|x-y|^{4}}|u(\tau,y)|^2\,\mathrm{d}y\\
&\lesssim_s \|u(t_0)\|_{L^2(B_{2r})}+(t-t_0)\|u\|^2_{L^{\infty}([t_0,t];L^2(B_{2r}))}.
\end{align*}
Similarly, we get that
\begin{align*}
|\nabla u_1(t,x)|\leq \|u(t_0)\|_{L^2(B_{2r})}+(t-t_0)\|u\|^2_{L^{\infty}([t_0,t];L^2(B_{2r}))},\quad x\in B^c_{3r},\,0<t_0\leq t<T.
\end{align*}
Combining the above two estimates, we deduce that
\begin{equation}\label{0}\|(1-\tilde{\varphi})u_1\|_{B^{1}_{\infty,\infty}}\leq \|(1-\tilde{\varphi})u_1\|_{W^{1,\infty}}\leq \|u_1\|_{W^{1,\infty}(B^c_{3r})}<\infty
\end{equation}
where $\tilde{\varphi}_r(x)=\varphi_r(x/2)$.
To deal with $u_2$, we observe that
\begin{align*}
&\|(1-\varphi)^2u\otimes u\|_{B^{0}_{\infty,\infty}}\leq \|(1-\varphi)u\|^2_{L^{\infty}},\quad\|(1-\varphi)^2u\otimes u\|_{B^{1/2}_{\infty,\infty}}\leq \|(1-\varphi)u\|^2_{B^{1/2}_{\infty,\infty}}.
\end{align*}
Hence, by lemma 2.4 in \cite{BCD11} , we get that
\begin{align}
&\|u_2\|_{B^{1/2}_{\infty,\infty}}\lesssim_{s,t-t_0}\|(1-\varphi)u(t_0)\|_{L^{\infty}}+ \|(1-\varphi)u\|^2_{L^{\infty}([t_0,t]\times \RR^3)},\label{1'}\\
&\|u_2\|_{B^{1}_{\infty,\infty}}\leq  \lesssim_{s,t-t_0}\|(1-\varphi)u(t_0)\|_{L^{\infty}}+ \|(1-\varphi)u\|^2_{L^{\infty}([t_0,t];B^{1/2}_{\infty,\infty})}.\label{1}
\end{align}
which implies that
\begin{equation}\label{2}
\|(1-\tilde{\varphi})u_2\|_{B^{(k+1)/2}_{\infty,\infty}}\leq \|1-\tilde{\varphi}\|_{L^{\infty}}\|u_2\|_{B^{k/2}_{\infty,\infty}}
+\|u_2\|_{L^{\infty}}\|1-\tilde{\varphi}\|_{B^{k/2}_{\infty,\infty}}<\infty.
\end{equation}
Substituting \eqref{0} and \eqref{2} with $k=0$ and $r=R$ into \eqref{1} with $r=2R$, and then combining with \eqref{0} with $r=2R$, we finally deduce that $u_1\in L^{\infty}([\delta,t];W^{1,\infty}(B^c_{6R}))$ and $u_2\in L^{\infty}([\delta,t];B^{1,\infty})$.

Then, we conclude that for all $0<\delta<t<T$
\begin{align*}
\|\Lambda^s u_1\|_{L^{2}([\delta,t];L^{2}_{uloc})}&\leq \|\varphi_{4R}u_1\|_{L^{2}([\delta,t];H^{s}_{uloc})}+\|(1-\varphi_{4R})u_1\|_{L^{2}
([\delta,t];H^{s}_{uloc})}\\
&\lesssim \|u_1\|_{L^{2}([\delta,t];L^{2}_{uloc})}+\|\Lambda^s u_1\|_{L^{2}([\delta,t];L^{2}(B_{8R}))}+\| (1-\varphi_r)u_1\|_{L^{2}([\delta,t];H^{1}_{uloc})}\\
&\lesssim_{\delta,t} \|u_1\|_{L^{\infty}([\delta,t];L^{2}_{uloc})}+\|\Lambda^s u_1\|_{L^{2}([\delta,t];L^{2}(B_{8R}))}+\|u_1\|_{L^{\infty}([\delta,t];
W^{1,\infty}(B^c_{6R}))}<\infty
\end{align*}
and
\[\|\Lambda^s u_2\|_{L^{2}([\delta,t];L^{2}_{uloc})}\lesssim_{\delta,t}\|\Lambda^s u_2\|_{L^{\infty}([\delta,t];L^{\infty})}\lesssim_{\delta,t}\| u_2\|_{L^{\infty}([\delta,t];B^{1,\infty})}<\infty.\]
Then, we complete the proof of Proposition \ref{pro.regu}.
\end{proof}

\section{Global-in-time existence of local Leray solutions}\label{sec.5}
\setcounter{section}{5}
\setcounter{equation}{0}

In this section, we will adapt the construction method to prove the global in time existence of local Leray solutions to \eqref{NS}-\eqref{NSI} with initial data in $E^2$. To do it, we need the following ``weak-strong'' uniqueness and decomposition lemma.
\begin{proposition}[Weak-strong uniqueness]\label{pro.uni}
Let $(u,P),(v,\bar{P})$ be two local Leray solutions to \eqref{NS} on $\RR^3\times (0,T)$ with same initial data $u_0$. Suppose that $\nabla v\in L^{q}([0,T'];E^p)$ for all $T'<T$ with $\tfrac{2s}q+\tfrac3p=2s$ and $p\geq 2$, then $u=v$ in $\RR^3\times (0,T)$ almost everywhere.
\end{proposition}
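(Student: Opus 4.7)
I set $w=u-v$ and $Q=P-\bar P$. Subtracting the two equations,
\[\partial_t w+\Lambda^{2s}w+u\cdot\nabla w+w\cdot\nabla v+\nabla Q=0,\qquad \Div w=0,\]
and $w(0)=0$. The plan is to derive a \emph{local} energy inequality for $|w|^2$ against the cutoff $\psi=\phi_{x_0}$, with partner $\tilde\psi$ satisfying $\tilde\psi\psi=\psi$, then take $\sup_{x_0\in\RR^3}$ to obtain a Gronwall-type inequality in $L^2_{uloc}$, and conclude $w\equiv 0$.

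\textbf{Step 1 (local energy for $w$).} I will combine three ingredients: (a) the local energy inequality for $u$ from Definition \ref{def.ller}(iv); (b) a local energy \emph{equality} for $v$, which is available because the Serrin-type assumption $\nabla v\in L^q([0,T'];E^p)$ with $\tfrac{2s}{q}+\tfrac{3}{p}=2s$ upgrades $v$'s local energy inequality to an equality (by a standard mollification of $v\psi$ as test function, using Corollary \ref{Cor.com} to control the commutator terms); and (c) the mixed identity obtained by testing the equation for $v$ against $u\psi$ and the equation for $u$ against $v\psi$, legitimate because $v$ is smooth enough to act as a test function. The combination $(\mathrm a)+(\mathrm b)-2(\mathrm c)$ yields, for all $t\in(0,T')$,
\[\int|w(t)|^2\psi+2\int_0^t\!\!\int\psi|\Lambda^sw|^2\,\mathrm{d}x\mathrm{d}\tau\leq 2\int_0^t\!\!\int(w\cdot\nabla v)\cdot w\,\psi\,\mathrm{d}x\mathrm{d}\tau+\mathcal E_{\rm cut}(t)+\mathcal E_{\rm com}(t)+\mathcal E_{\rm press}(t),\]
where $\mathcal E_{\rm cut}$ collects the cubic cutoff residues $\int(|u|^2u-|v|^2v)\cdot\nabla\psi$, $\mathcal E_{\rm com}$ collects the commutator errors $\int[\tilde\psi,\Lambda^s]\Lambda^sw\cdot w\psi$ and $\int\tilde\psi\Lambda^sw\cdot[\Lambda^s,\psi]w$, and $\mathcal E_{\rm press}$ collects the pressure boundary term $\int Q\,(w\cdot\nabla\psi)$.

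\textbf{Step 2 (uniform control in $x_0$).} I will bound each error uniformly in $x_0\in\RR^3$. The commutator pieces are handled by Lemma \ref{lem.com} and Corollary \ref{Cor.com} in terms of $\|w\|_{L^2_{uloc}}$ and $\|\Lambda^sw\|_{L^2_{uloc}}$, with half of the $\Lambda^s$ absorbed into the dissipation by Young. The cubic cutoff residues $\mathcal E_{\rm cut}$ and the pressure term $\mathcal E_{\rm press}$ are treated exactly as in Step~2 of Section \ref{sec.3}: decompose $Q$ by Remark \ref{rem.1} into a local Calderón--Zygmund piece (controlled via $\|u\otimes u-v\otimes v\|_{L^{3/2}}\lesssim(\|u\|_{L^3_{uloc}}+\|v\|_{L^3_{uloc}})\|w\|_{L^3_{uloc}}$) and a far-field tail bounded by the kernel estimate \eqref{eq.press1} and Lemma \ref{Lem.3-1}. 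The crucial new term is the critical cross term; by Hölder's inequality, Sobolev embedding $\dot H^s\hookrightarrow L^{6/(3-2s)}$, and Corollary \ref{Cor.com} applied to $\psi^{1/2}w$,
\[\Big|\!\int(w\cdot\nabla v)\cdot w\,\psi\Big|\lesssim\|\nabla v\,\psi\|_{L^p}\|\psi^{1/2}w\|_{L^{2p/(p-1)}}^2\lesssim\|\nabla v\|_{L^p_{uloc}}\bigl(\|w\|_{L^2_{uloc}}^{2/q}\|\Lambda^sw\|_{L^2_{uloc}}^{2-2/q}+\text{lower order}\bigr),\]
where the scaling relation $\tfrac{2s}{q}+\tfrac{3}{p}=2s$ is exactly what makes the interpolation exponents match. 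Young's inequality with conjugate exponents $(q,q/(q-1))$ absorbs the $\Lambda^sw$ power into the dissipation and leaves a harmless $\|\nabla v\|_{L^p_{uloc}}^q\,\|w\|_{L^2_{uloc}}^2$ term.

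\textbf{Step 3 (Gronwall).} Taking $\sup_{x_0}$ in the resulting inequality yields
\[\|w(t)\|_{L^2_{uloc}}^2+\int_0^t\|\Lambda^sw(\tau)\|_{L^2_{uloc}}^2\,\mathrm{d}\tau\leq C\int_0^t f(\tau)\,\|w(\tau)\|_{L^2_{uloc}}^2\,\mathrm{d}\tau,\]
with $f(\tau)=1+\|\nabla v(\tau)\|_{L^p_{uloc}}^q\in L^1([0,T'])$. Since $w(0)=0$, Gronwall gives $w\equiv 0$ on $[0,T']$, and letting $T'\uparrow T$ finishes the proof.

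\textbf{Main obstacle.} The hardest part is Step 1(b)--(c): justifying the local energy equality for $v$ and the mixed identity in the uloc framework. In the global $L^2$ setting these are standard under a Serrin condition, but here $v\in L^\infty_tL^2_{uloc}$ only, and $\bar P$ is defined only modulo time-dependent functions, so the mollification/test-function argument must be carried out against compactly supported cutoffs with the fractional commutator $[\Lambda^s,\psi]$ handled at every stage via Lemma \ref{lem.com}. The assumption $\nabla v\in E^p$ (rather than merely $L^p_{uloc}$) is precisely what guarantees that the $\sup_{x_0}$ in Step 2 produces a bound that is uniformly finite and integrable in time, because the far-field contribution $\|\nabla v\,\phi_{x_0}\|_{L^p}$ vanishes as $|x_0|\to\infty$ and thus cannot force a divergence when supped against the unbounded lattice of centers.
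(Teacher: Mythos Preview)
Your proposal is essentially the same approach as the paper's: both combine the local energy inequalities for $u$ and $v$ with the mixed identity to obtain a local energy inequality for $w=u-v$, then estimate the commutator, convective cutoff, pressure, and cross terms uniformly in $x_0$ and close by Gronwall.

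Two minor points deserve correction. First, the convective cutoff residue contains a genuinely cubic piece $\int|w|^2w\cdot\nabla\psi$ (in the paper's form, $\int|w|^2(w+v)\cdot\nabla\varphi$), so the inequality you reach in Step~3 is not purely linear in $\|w\|_{L^2_{uloc}}^2$; the paper records this as the superlinear term $\int_0^t\alpha^{6(2s-1)/(4s-3)}(\tau)\,\mathrm{d}\tau$. This is harmless because the a~priori bound $\alpha(t)\le\|u\|_{L^\infty_tL^2_{uloc}}+\|v\|_{L^\infty_tL^2_{uloc}}$ lets you absorb the extra powers into the constant $M$ and reduce to linear Gronwall, but you should say so rather than assert a linear inequality directly. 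Second, your closing remark about why $E^p$ (rather than $L^p_{uloc}$) is needed is off: by definition $\sup_{x_0}\|\nabla v\,\phi_{x_0}\|_{L^p}=\|\nabla v\|_{L^p_{uloc}}$ is already finite without any decay at infinity, so the $\sup_{x_0}$ cannot diverge for that reason. The $E^p$ hypothesis in the statement reflects what is actually available in the application (where $v\in\bar H^1_{uloc}$), not a structural necessity of the uniqueness argument itself.
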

\begin{remark}
When $s<1$, the properties of $u$ can't provide any information of $\nabla u$. Hence, unlike the case $s=1$, to control $\int^t_0\big((u-v)\cdot\nabla) (u-v)\cdot v\varphi\,\mathrm{d}\mathrm{d}\tau$,
we need at least the addition information of $\nabla v$.
\end{remark}
\begin{proof}
Since $L^{\infty}([0,T'];E^2)\cap L^2([0,T'];\bar{H}^s_{uloc})\hookrightarrow L^{\alpha}([0,T'];\bar{L}^{\beta}_{uloc})$ with $\tfrac{2s}{\alpha}+\tfrac{3}{\beta}=\frac32$, we have that
\begin{align*}
&\int_{\RR^3} v(t)\cdot u(t)\varphi\,\mathrm{d}x+2\int^t_0\int_{\RR^3}(\Lambda^s
v\cdot\Lambda^su)\varphi\,\mathrm{d}x\mathrm{d}\tau\\
&=\int_{\RR^3}|u_0|^2\varphi\,\mathrm{d}x-\int^t_0\int_{\RR^3}\big([\tilde{\varphi},\Lambda^s]\Lambda^sv\cdot
u\varphi+\varphi\Lambda^sv\cdot[\Lambda^s,\tilde{\varphi}]u\big)\,\mathrm{d}x\mathrm{d}\tau\\
&\quad-\int^t_0\int_{\RR^3}\big([\tilde{\varphi},\Lambda^s]\Lambda^su\cdot
v\varphi+\varphi\Lambda^su
\cdot[\Lambda^s,\tilde{\varphi}]v\big)\,\mathrm{d}x\mathrm{d}\tau\\
&\quad-\int^t_0\int_{\RR^3} \big((u\cdot\nabla) u\cdot v+(v\cdot\nabla) v\cdot
u\big)\varphi\,\mathrm{d}x\mathrm{d}\tau+\int^t_0\int_{\RR^3}\big((\bar{P}u+Pv)\big)
\cdot\nabla\varphi \,\mathrm{d}x\mathrm{d}\tau
\end{align*}
for any nonnegative functions $\varphi,\tilde{\varphi}\in \mathcal{D}(\RR^3)$ satisfying $\tilde{\varphi}\varphi=1$. Since $(u,P)$ and $(v,\bar{P})$ both satisfy the local energy
inequality \eqref{eq.loc}, setting $w=u-v$, from the above relation, we deduce that
\begin{align*}
&\int_{\RR^3}\varphi|w|^2\,\mathrm{d}x+2\int^t_0\int_{\RR^3}\varphi|\Lambda^s
w|^2\,\mathrm{d}x\mathrm{d}\tau\\
&\leq -\int^t_0\int_{\RR^3}\big([\tilde{\varphi},\Lambda^s]\Lambda^sw\cdot
w\varphi+\varphi\Lambda^sw\cdot[\Lambda^s,\tilde{\varphi}]w\big)\,\mathrm{d}x\mathrm{d}\tau\\
&\quad+\int^t_0\int_{\RR^3}|w|^2(w+v)\nabla
\varphi\,dx\,d\tau-2\int^t_0\int_{\RR^3}(w\cdot\nabla v)\cdot w\varphi\,\mathrm{d}x\mathrm{d}\tau\\
&\quad+\int^t_0\int_{\RR^3}(P-\bar{P})w\cdot\nabla\varphi\,\mathrm{d}x\mathrm{d}\tau
\end{align*}
for any nonnegative functions $\varphi,\tilde{\varphi}\in \mathcal{D}(\RR^3)$ satisfying $\tilde{\varphi}\varphi=1$.
Set
\[\alpha(t)=\sup_{x_0\in\RR^3}\big(\int_{B_1(x_0)}|w(t)|^2\,dx\big)^{1\over 2},\quad
\beta(t)=\sup_{x_0\in\RR^3}\big(\int^t_0\int_{B_1(x_0)}|\Lambda^s
w|^2\,\mathrm{d}x\mathrm{d}\tau\big)^{1\over 2}.\]
Then, along the exact same lines as the proof of \eqref{est.6}, we deduce that
\begin{align*}
\alpha^2(t)+\beta^2(t)\leq& M\int^t_0\big(\alpha^2(\tau)
+\alpha^{\frac{6(2s-1)}{4s-3}}(\tau)\big)\,\mathrm{d}\tau
\end{align*}
where $M$ depends only on $T$, $\|u\|_{L^{\infty}((0,T);E^2)}$, $\|\Lambda^su\|_{L^{2}((0,T);E^2)}$ and $\|\nabla u(t)\|_{L^q((0,T);E^p)}$.
This gives $\alpha(t)=0$ on $(0,T)$, which implies that $u=v$ on $(0,T)$. Then we complete the proof of Proposition \ref{pro.uni}.
\end{proof}
\begin{lemma}\label{p}
Let $s>0$ and $f\in \bar{H}^s_{uloc}(\RR^n)$. Then for all $\varepsilon>0$, there exists $g\in V^s$ and $h\in L^2(\RR^n)$ such that $f=g+h$ and $\|g\|_{\bar{H}^s_{uloc}(\RR^n)}\leq \varepsilon.$
\end{lemma}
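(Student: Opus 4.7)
The plan is to decompose $f$ by a single smooth cutoff that localizes to a large enough ball: the tail piece (supported outside a big ball) will inherit the vanishing property of $\bar H^s_{uloc}$ and therefore have small norm, while the compactly supported piece will automatically lie in $L^2(\RR^n)$ because $f\in L^2_{loc}$.

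Concretely, given $\varepsilon>0$, I would first invoke the defining property of $\bar H^s_{uloc}$ to pick $R_0=R_0(\varepsilon)$ so that $\|\phi_{x_0}f\|_{H^s(\RR^n)}<\varepsilon/C$ for every $|x_0|\ge R_0$, where $C\ge 1$ is a universal multiplier constant to be fixed below. Next I pick a smooth radial cutoff $\chi\in C^\infty_c(\RR^n)$ with $\chi\equiv 1$ on $B_{R_0+4}$, $\mathop{\rm supp}\chi\subset B_{R_0+5}$, and $\|\chi\|_{C^k}$ bounded independently of $R_0$ for every $k$. Then I set $h:=\chi f$ and $g:=(1-\chi)f$. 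Since $\chi$ has compact support and $f\in L^2_{loc}$, the function $h$ is bounded in $L^2(\RR^n)$, which settles one of the two required properties.

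The bulk of the argument is the estimate $\|g\|_{\bar H^s_{uloc}}\le\varepsilon$, and I would carry it out by splitting on the location of the localizer $\phi_{x_0}$:
\begin{enumerate}[\rm(i)]
\item If $|x_0|\le R_0+2$, then $B_2(x_0)\subset B_{R_0+4}\subset\{\chi=1\}$, so $\phi_{x_0}g\equiv 0$.
\item If $|x_0|\ge R_0+7$, then $B_2(x_0)$ misses $\mathop{\rm supp}\chi$, so $\phi_{x_0}g=\phi_{x_0}f$ and $\|\phi_{x_0}g\|_{H^s}<\varepsilon/C\le\varepsilon$ by the choice of $R_0$.
\item In the transitional band $R_0+2<|x_0|<R_0+7$, I rewrite $\phi_{x_0}g=(1-\chi)(\phi_{x_0}f)$ and bound $\|(1-\chi)(\phi_{x_0}f)\|_{H^s}\le C\,\|\phi_{x_0}f\|_{H^s}<\varepsilon$, where $C$ is the operator norm of multiplication by $1-\chi$ on $H^s(\RR^n)$ (which depends only on finitely many $C^k$ seminorms of $\chi$, hence is bounded independently of $R_0$).
\end{enumerate}
Taking the supremum over $x_0$ gives the desired bound, and the vanishing at infinity in (ii) simultaneously ensures $g\in\bar H^s_{uloc}$.

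The only point requiring genuine care, and the main obstacle in the whole argument, is the multiplier bound used in case (iii): that smooth bounded functions with bounded derivatives act continuously on $H^s(\RR^n)$ for non-integer $s\ge 0$. This is a standard consequence of the Littlewood--Paley characterization of $H^s$, or equivalently of interpolation between the integer cases, but it must be invoked explicitly; everything else in the proof is a bookkeeping exercise in the supports of $\phi_{x_0}$ and $\chi$.
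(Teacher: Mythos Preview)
Your proposal is correct and follows the standard cutoff argument that the paper has in mind: the paper does not give its own proof but simply refers to Proposition~12.1 in \cite{LR02}, whose proof is precisely this type of decomposition via a smooth truncation at large radius. Your case analysis on the position of $x_0$ relative to the transition annulus of $\chi$, together with the $H^s$-multiplier bound for $1-\chi$ (with constants independent of $R_0$ because the transition width is fixed), is exactly what is needed, so there is nothing to add.
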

\begin{remark}
Since the proof is completely parallel to Proposition 12.1 in \cite{LR02}, we here omit it.
\end{remark}
Next, we come back to the proof of Theorem \ref{TH2}.
\begin{proof}[Proof of Theorem \ref{TH2}]
In what follows, we will prove the global existence of local Leray solutions with initial values in $E^2$ by construction.

\textbf{Step 1: local existence in $E^2$.} For each divergence free vector $u_0\in E^2$,  Theorem \ref{TH1} tells us that there exists a local Leray
solution $u^1$ to \eqref{NS} on $\RR^3\times (0,T_1)$ stating from $u_0$.

\textbf{Step 2: Construction of local Leray solution from $(0,T_N)$ to $(0,T_{N+1})$ with $T_{N+1}-T_{N}>3/4$.} Assume that $u^N$ is a local Leray solution on $(0,T_N)$, Proposition \ref{pro.regu} tells us that there exists $S_{N+1}\in
(\sup\{0,T_N-1/4\},T_N)$ such that
\[u^N(S_{N+1})\in \bar{H}^{\frac52-2s}_{uloc}\quad\text{and}\quad\lim_{t\to S_{N+1}+}\|u^N(t)-u^N(S_{N+1})\|_{E^2}=0.\]

Next, by Theorem \ref{well-loc}, we can find a solution $X^{N+1}\in C([S_{N+1},U_{N+1}];\bar{H}^{\frac52-2s}_{uloc})$ with $S_{N+1}<U_{N+1}<T_N$ and
$X^{N+1}(S_{N+1})=u^N(S_{N+1})$.
In addition, it satisfies that
\[X^{N+1}\in L^{\frac{4s}{4s-3}}([S_{N+1},U_{N+1}]\bar{H}^{1}_{uloc})\cap L^{2}([S_{N+1},U_{N+1}];\bar{H}^{\frac52-s}_{uloc}).\]
By Corollary \ref{Cor-6.3}, $X^{N+1}$ is a local Leray solution to \eqref{NS} with initial data $u^N(S_{N+1})$. On the other hand, by Proposition \ref{p}, we can split $u^N(S_N+1)$ into
$v^{N+1}_0+w^{N+1}_0$ satisfying $\Div v^{N+1}_0=\Div w^{N+1}_0=0$,
$\|v^{N+1}_0\|_{\bar{H}^{\frac52-2s}_{uloc}}<\varepsilon$ and $w^{N+1}_0\in L^2$, where
$\varepsilon$ is chosen small enough such that \eqref{NS} admits a unique solution $v^{N+1}\in
C([S_{N+1},S_{N+1}+1];V^{\frac52-2s})$ with $v^{N+1}(S_{N+1})=v^{N+1}_0$ satisfying
\[v^{N+1}\in L^{2}([S_{N+1},S_{N+1}+1];\bar{H}^{\frac52-2s}_{uloc}).\]
Then, by Theorem \ref{TH.pertu}, there exists a weak solution $w^{N+1}\in L^{\infty}([S_{N+1},S_{N+1}+1];L^2)\cap L^2([S_{N+1},S_{N+1}+1];\dot{H}^s)$ satisfying \eqref{6.1} to the following perturbed system:
\begin{equation*}
\left\{\begin{array}{ll}
\partial_t w^{N+1}+\Lambda^{2s} w^{N+1}+w^{N+1}\cdot\nabla w^{N+1}+v^{N+1}\cdot\nabla w^{N+1}+w^{N+1}\cdot\nabla
v^{N+1}+\nabla \Pi=0,\\
\Div w^{N+1}=0,\\
w^{N+1}(S_{N+1},x)=w^{N+1}_0.
\end{array}.\right.
\end{equation*}
Set $Y^{N+1}=v^{N+1}+w^{N+1}$. In the light of Corollary \ref{Cor-6.3}, it's obvious that $Y^{N+1}$ is a local Leray solution to \eqref{NS} on $(S_{N+1},S_{N+1}+1)$ starting from $u^{N}(S_{N+1})$.

Finally, since $X^{N+1}\in L^{\frac{4s}{4s-3}}([S_{N+1},U_{N+1}];\bar{H}^1_{uloc})$, by Proposition \ref{pro.uni}, we conclude that $u^{N}=X^{N+1}=Y^{N+1}$ on $(S_{N+1},U_{N+1})$. Hence, defining $u^{N+1}=\chi_{[0,S_{N+1}]}u^{N}+Y^{N+1}$ and $T_{N+1}=S_{N+1}+1$, we construct a local Leray solution to \eqref{NS} on $(0,T_{N+1})$ with $T_{N+1}-T_N>3/4$.

\textbf{Step.3 global existence in $E^2$.}
By induction on $N$, we can construct a sequence of Leray solutions $u^N$ on $(0,T_N)$
satisfying $u^N=u^{N+1}$ on $(0,T_{N})$ and $T_{N+1}-T_{N}>3/4$. Hence, $u=\lim_{N\to\infty} u^N$ is well defined on $(0,\infty)$. Furthermore, $u$ is a local Leray solution to \eqref{NS} starting from $u_0$ on $\RR^3\times (0,\infty)$. Then we complete the proof of Theorem \ref{TH2}.
\end{proof}

\section{Appendix}\label{sec.A}
\setcounter{section}{6}
\setcounter{equation}{0}

\subsection{Equivalence of differential and integral formulations for FNS}

\begin{theorem}\label{TH6-1}
Let $u\in L^2([0,T'];L^2_{uloc})$ for any $T'<T$. Then the following two arguments are equivalent:
\begin{enumerate}
  \item [$\mathrm{(1)}$]$u$ is a solution of the differential fractional Navier-Stokes equations
  \begin{equation*}
  \begin{cases}
  \partial_t u+\Lambda^{2s}u+\mathcal{P}\Div(u\otimes u),\\
  \Div u=0.
  \end{cases}
  \end{equation*}
  \item [$\mathrm{(2)}$] $u$ is a solution of the following integral fractional Navier-Stokes equations
  \begin{equation*}
  \exists u_0\in \mathcal{S}'(\RR^n),
  \begin{cases}
  u=e^{-t\Lambda^{2s}}u_0-\int^t_0 e^{-(t-s)\Lambda^{2s}}\mathbb{P}\Div(u\otimes u)\,\mathrm{d}s,\\
  \Div u=0.
  \end{cases}
  \end{equation*}
  \end{enumerate}
In particular, if $u\in L^2([0,T'];E^2)$ for all $T'<T$, the above arguments are equivalent to that $u$ is a weak solution to \eqref{NS}.
\end{theorem}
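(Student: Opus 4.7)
The proof naturally splits into the easy direction $(2)\Rightarrow(1)$, the harder direction $(1)\Rightarrow(2)$, and the additional equivalence with the weak formulation when $u\in L^2([0,T'];E^2)$.

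For $(2)\Rightarrow(1)$, my plan is to apply $\partial_t+\Lambda^{2s}$ to the Duhamel identity in the sense of distributions. The linear part $e^{-t\Lambda^{2s}}u_0$ is annihilated, while differentiating the convolution integral with respect to its upper limit produces the boundary contribution $\mathbb{P}\Div(u\otimes u)(t,\cdot)$, with the interior $\Lambda^{2s}$-terms canceling. Using the identity $\mathbb{P}=I+\nabla(-\Delta)^{-1}\Div$, this rewrites as $\partial_tu+\Lambda^{2s}u+\Div(u\otimes u)+\nabla P=0$ with $\nabla P=-\nabla\Delta^{-1}\Div\Div(u\otimes u)$; combined with $\Div u=0$ (forced by $\mathbb{P}$), this is precisely \eqref{NS}. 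Differentiation under the integral is justified by the integrable $(t-\tau)^{-1/(2s)}$ singularity of the Oseen kernel from Lemma~\ref{lem.O_t} together with $u\otimes u\in L^1([0,T'];L^1_{uloc})$, which follows from $u\in L^2([0,T'];L^2_{uloc})$.

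For $(1)\Rightarrow(2)$, the idea is to construct the initial datum $u_0\in\mathcal{S}'(\RR^3)$ directly from $u$ by subtracting off a Duhamel correction. Set
\[
B(u,u)(t)\triangleq\int_0^t e^{-(t-\tau)\Lambda^{2s}}\mathbb{P}\Div(u\otimes u)(\tau)\,\mathrm{d}\tau,
\]
which is a well-defined element of $\mathcal{D}'(\RR^3\times(0,T))$: convolution with $\nabla\mathcal{O}_{j,k,t-\tau}$ is bounded on $L^1_{uloc}$ with integrable time singularity by Lemma~\ref{lem.O_t} applied with $\alpha_1=1$, $\alpha_2=0$, $r=1$ (and $s>1/2$). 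A direct computation using Fubini and the semigroup identity $e^{-(t-\tau)\Lambda^{2s}}\circ e^{-(\tau-\sigma)\Lambda^{2s}}=e^{-(t-\sigma)\Lambda^{2s}}$ then shows
\[
\partial_t B(u,u)+\Lambda^{2s}B(u,u)=\mathbb{P}\Div(u\otimes u)\quad\text{in }\mathcal{D}'(\RR^3\times(0,T)).
\]
Subtracting this from the distributional equation (1) satisfied by $u$, the distribution $w\triangleq u+B(u,u)$ solves the homogeneous fractional heat equation $\partial_tw+\Lambda^{2s}w=0$ on $\RR^3\times(0,T)$. The classical caloric representation theorem then furnishes a unique $u_0\in\mathcal{S}'(\RR^3)$ with $w(t)=e^{-t\Lambda^{2s}}u_0$, which, substituted back, is exactly~(2).

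The main obstacle is verifying this caloric representation without a preassigned initial trace of $u$: the datum $u_0$ must be realized as the $\mathcal{S}'$-limit $\lim_{t\to 0^+}w(t)$. For every $\varphi\in\mathcal{S}(\RR^3)$ the pairing $t\mapsto\langle w(t),\varphi\rangle$ is absolutely continuous on $(0,T)$ and satisfies the scalar ODE $\tfrac{d}{dt}\langle w(t),\varphi\rangle=-\langle w(t),\Lambda^{2s}\varphi\rangle$; since $\Lambda^{2s}\varphi\in\mathcal{S}(\RR^3)$ and the $L^2_{uloc}$-bound on $u$ controls $\langle w(t),\Lambda^{2s}\varphi\rangle$ uniformly on compact $t$-intervals, Gronwall-type reasoning yields a continuous extension to $t=0$ and hence defines $u_0\in\mathcal{S}'$. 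For the final ``in particular" assertion, the stronger hypothesis $u\in L^2([0,T'];E^2)$ upgrades $u\otimes u$ to $L^1([0,T'];E^1)$, so the same Oseen-kernel estimates give $B(u,u)(t)\in E^2$ continuously in $t$ and therefore $u_0\in E^2$; the Duhamel formula then yields the strong $L^2_{loc}$-continuity of $u$ up to $t=0$, which is what is required to interpret $u$ as a weak solution of \eqref{NS} in the usual sense. The converse implication from weak solution to~(1) is immediate.
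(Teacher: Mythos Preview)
The paper does not give its own proof of this theorem; it simply states that ``the proof is completely parallel to classical Navier-Stokes system'' and refers to Theorems~11.1 and~11.2 of Lemari\'e-Rieusset~\cite{LR02}. Your outline is precisely the argument found there: differentiate the Duhamel formula for $(2)\Rightarrow(1)$, and for $(1)\Rightarrow(2)$ subtract off the bilinear Duhamel term $B(u,u)$, observe that $w=u+B(u,u)$ solves the homogeneous fractional heat equation, and recover $u_0\in\mathcal{S}'$ as the $t\to 0^+$ trace of $w$. So your approach matches the reference the paper invokes.

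Two small remarks. First, your assertion that $\Lambda^{2s}\varphi\in\mathcal{S}(\RR^3)$ for $\varphi\in\mathcal{S}(\RR^3)$ is not literally true when $2s$ is non-integer, since the symbol $|\xi|^{2s}$ is not smooth at the origin; one works instead in $\mathcal{S}'$ modulo polynomials (as in~\cite{LR02}) or pairs against test functions with $\hat\varphi$ vanishing near $0$. Second, your reading of the ``in particular'' clause is slightly off: the point is not primarily that $u_0\in E^2$, but rather that under the hypothesis $u\in L^2([0,T'];E^2)$ the Riesz transforms defining the pressure $P=-\Delta^{-1}\Div\Div(u\otimes u)$ make sense (they are bounded on $E^p$ but not on $L^p_{uloc}$), so one can pass between the projected formulation $(1)$ and the pressure formulation \eqref{NS}. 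Neither point is a genuine gap in the strategy.
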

Since the proof is completely parallel to classical Navier-Stokes system, here we omit it. For details, see Theorem 11.1 and Theorem 11.2 in \cite{LR02}.
\subsection{Kato's theory of FNS in the spaces of local
measures}

Set
\[X_T:=\{f\in L^4([0,T];\bar{H}^{(5-3s)/2}_{uloc})\cap L^{4s/(4s-3)}([0,T];\bar{H}^{1}_{uloc})\,\big|\,t^{1-1/(2s)}f\in C([0,T];L^{\infty})\}\]
with
\[\|f\|_{X_T}:=\|f\|_{L^4([0,T];\bar{H}^{(5-3s)/2}_{uloc})}
+\|f\|_{L^{4s/(4s-3)}([0,T];\bar{H}^{1}_{uloc})}+\sup_{0\leq t\leq T}t^{1-1/(2s)}\|f\|_{L^{\infty}}.\]
Then, we can prove the uniqueness existence of \eqref{NS} in $X_T$. 
\begin{theorem}\label{well-loc}
Let $s\in[5/6,1)$. Then for all divergence free vector field $u_0\in \bar{H}^{5/2-2s}_{uloc}$, there exists a $0<T\leq 1$ satisfying
\[4C_{\nu}\|e^{\nu t\Lambda^{2s}}u_0\|_{X_T}<1\]
for some constant $C$ and a unique mild solution
$$u\in C([0,T];\bar{H}^{5/2-2s}_{uloc})\cap X_T\cap L^2([0,T];\bar{H}^{5/2-s}_{uloc})$$
to \eqref{NS} with $u(\cdot,0)=u_0$. In particular, there exists a constant $\eta>0$ such that if $\|u_0\|_{\bar{H}^{5/2-2s}_{uloc}}<\eta$, then we have that $T=1$.
\end{theorem}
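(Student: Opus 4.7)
The plan is to establish Theorem \ref{well-loc} by a Kato-type Picard iteration applied to the equivalent mild formulation (cf.\ Theorem \ref{TH6-1})
\[u(t)=e^{-t\Lambda^{2s}}u_0+B(u,u)(t),\qquad B(u,v)(t):=-\int_0^t e^{-(t-\tau)\Lambda^{2s}}\mathbb{P}\,\Div(u\otimes v)(\tau)\,\mathrm{d}\tau.\]
The triple of norms defining $\|\cdot\|_{X_T}$ is motivated by scaling: under $u_\lambda(x,t)=\lambda^{2s-1}u(\lambda x,\lambda^{2s}t)$, each of $L^4_t\bar H^{(5-3s)/2}_{uloc}$, $L^{4s/(4s-3)}_t\bar H^1_{uloc}$, and the pointwise weight $t^{1-1/(2s)}L^\infty$ is invariant, and they sit at the same critical regularity level as the data space $\bar H^{5/2-2s}_{uloc}$.

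The first step is to control the free evolution. Combining the smoothing estimates of Lemma \ref{lem.G_t} with the equivalent norm of Lemma \ref{lem.eqvi} and the density of $\mathcal{D}(\RR^3)$ in $\bar H^{5/2-2s}_{uloc}$, one obtains $\|e^{-t\Lambda^{2s}}u_0\|_{X_T}<\infty$ together with $\|e^{-t\Lambda^{2s}}u_0\|_{X_T}\to 0$ as $T\to 0$, as well as $e^{-t\Lambda^{2s}}u_0\in C([0,T];\bar H^{5/2-2s}_{uloc})\cap L^2([0,T];\bar H^{5/2-s}_{uloc})$. This membership, together with the smallness hypothesis, will play the role of the free data in the iteration.

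The heart of the argument is the bilinear estimate $\|B(u,v)\|_{X_T}\le C_\nu\|u\|_{X_T}\|v\|_{X_T}$, proved component by component. In each component one distributes the derivative $\Div$ onto the Oseen semigroup using Lemma \ref{lem.O_t}, leaving an appropriate $\bar H^{\gamma}_{uloc}$ norm of $u\otimes v$ to estimate in terms of two of the $X_T$-norms; this product step relies on Lemma \ref{lem.com} and Corollary \ref{Cor.com}, since the local-uniform Sobolev norms do not obey a simple product rule and one must control commutators of $\Lambda^{\gamma}$ with the localizing cutoffs $\phi_{x_0}$. The ensuing time integral $\int_0^t(t-\tau)^{-\kappa}(\cdots)\,\mathrm{d}\tau$ is then closed via H\"older in $\tau$ (and in the borderline cases by Hardy--Littlewood--Sobolev); the exponents $4$ and $4s/(4s-3)$ together with the weight $t^{1-1/(2s)}$ are precisely those that make the three closures simultaneously consistent under the hypothesis $s\ge 5/6$. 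With this bound in hand, a contraction on the ball $\{u\in X_T:\|u\|_{X_T}\le 2\|e^{-t\Lambda^{2s}}u_0\|_{X_T}\}$ under the assumption $4C_\nu\|e^{-t\Lambda^{2s}}u_0\|_{X_T}<1$ produces a unique fixed point in $X_T$, and applying the same bilinear estimate in the norms of $C([0,T];\bar H^{5/2-2s}_{uloc})$ and $L^2([0,T];\bar H^{5/2-s}_{uloc})$ lifts the solution into these spaces.

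Finally, for the small-data claim, the linear estimate $\|e^{-t\Lambda^{2s}}u_0\|_{X_1}\lesssim\|u_0\|_{\bar H^{5/2-2s}_{uloc}}$ shows that any $\eta$ below an absolute multiple of $(4C_\nu)^{-1}$ forces $4C_\nu\|e^{-t\Lambda^{2s}}u_0\|_{X_1}<1$, so the iteration runs on the full interval $[0,1]$ and yields $T=1$. The main obstacle is the bilinear step: in contrast to the $\dot H^\sigma$ setting where algebra-type product rules are classical, working in $\bar H^\sigma_{uloc}$ requires the commutator bookkeeping from Lemma \ref{lem.com} and Corollary \ref{Cor.com} and careful matching of H\"older exponents in both space and time, which is exactly where the constraint $s\ge 5/6$ becomes essential.
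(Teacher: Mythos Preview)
Your plan is correct and matches the paper's proof in structure: Duhamel formulation, bilinear estimate $\|B(u,v)\|_{X_T}\le C\|u\|_{X_T}\|v\|_{X_T}$, linear estimate for $e^{-t\Lambda^{2s}}u_0$ with the smallness $\|e^{-t\Lambda^{2s}}u_0\|_{X_T}\to 0$ as $T\to 0$, Banach fixed point in $X_T$, then a posteriori membership in $C([0,T];\bar H^{5/2-2s}_{uloc})\cap L^2([0,T];\bar H^{5/2-s}_{uloc})$.

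The one place where your sketch diverges from the paper is the product step in the bilinear estimate. You propose to control $\|u\otimes v\|_{\bar H^{\gamma}_{uloc}}$ by commutator bookkeeping via Lemma \ref{lem.com} and Corollary \ref{Cor.com}. The paper instead isolates this as a standalone product lemma (Lemma \ref{lem.H}): $\|fg\|_{\bar H^{7/2-3s}_{uloc}}\lesssim\|f\|_{\bar H^{(5-3s)/2}_{uloc}}\|g\|_{\bar H^{(5-3s)/2}_{uloc}}$, proved simply by localizing with $\phi_{x_0}^2$ and invoking the classical $H^\sigma(\RR^3)$ product rule---no commutators at all. This is cleaner, and also sidesteps a technical point you glossed over: Lemma \ref{lem.com} and Corollary \ref{Cor.com} only treat $[\Lambda^\sigma,\varphi]$ for $\sigma\in(0,1)$, whereas the regularity index $(5-3s)/2$ lies in $(1,5/4]$ for $s\in[5/6,1)$, so a direct commutator approach would need the extra layer of Lemma \ref{lem.eqvi} to reduce to order $<1$. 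The paper also pinpoints where $s\ge 5/6$ enters: it is the condition $7/2-3s\le 1$ needed so that the Oseen-kernel time singularity in the $\bar H^{(5-3s)/2}_{uloc}$ component remains integrable, not the product rule itself. Otherwise your outline and the paper's proof coincide.
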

To prove Theorem \ref{well-loc}, we will invoke the following lemma.
\begin{lemma}\label{lem.H}
Let $s\in [5/6,1]$. Then we have that
\begin{equation}\label{est.H}
\|fg\|_{\bar{H}^{7/2-3\alpha}_{uloc}}\leq C\|f\|_{\bar{H}^{(5-3\alpha)/2}_{uloc}}\|g\|_{\bar{H}^{(5-3\alpha)/2}_{uloc}}.
\end{equation}
\end{lemma}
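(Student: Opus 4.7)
The plan is to reduce this $\bar H^s_{uloc}$ inequality to a scalar product estimate on all of $\RR^3$,
\[
\|fg\|_{H^{s_0}(\RR^3)} \lesssim \|f\|_{H^{s_1}(\RR^3)}\, \|g\|_{H^{s_1}(\RR^3)},
\]
where $s_0 := 7/2-3s$ and $s_1 := (5-3s)/2$, and then to transfer that estimate to the uniformly local setting by a cutoff argument. For $s\in[5/6,1]$ one has $s_0 \in [1/2,1]$, $s_1 \in [1,5/4]$, and $s_0 \le s_1 < 3/2$, which is what will make the Sobolev embeddings used below nondegenerate.

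For the $\RR^3$ estimate I would apply the Kato--Ponce fractional Leibniz rule
\[
\|\Lambda^{s_0}(fg)\|_{L^2} \lesssim \|\Lambda^{s_0} f\|_{L^{p_1}}\|g\|_{L^{p_2}} + \|f\|_{L^{p_2}}\|\Lambda^{s_0}g\|_{L^{p_1}},\qquad \tfrac1{p_1}+\tfrac1{p_2}=\tfrac12,
\]
with the specific choice $1/p_1 = (5-3s)/6$ and $1/p_2 = (3s-2)/6$. A direct check confirms the two Sobolev identities
\[
\tfrac{1}{p_1} = \tfrac{1}{2} - \tfrac{s_1 - s_0}{3},\qquad \tfrac{1}{p_2} = \tfrac{1}{2} - \tfrac{s_1}{3},
\]
so that $\|\Lambda^{s_0}h\|_{L^{p_1}} + \|h\|_{L^{p_2}} \lesssim \|h\|_{H^{s_1}}$ for $h=f,g$; combined with the elementary bound $\|fg\|_{L^2} \leq \|f\|_{L^4}\|g\|_{L^4} \lesssim \|f\|_{H^{s_1}}\|g\|_{H^{s_1}}$ (via $H^{s_1}\hookrightarrow L^4$, which holds with room to spare), this settles the global inequality.

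To localize, I fix $\tilde\phi_{x_0}\in \mathcal{D}(B_3(x_0))$ with $\tilde\phi_{x_0}\equiv 1$ on $\mathrm{supp}\,\phi_{x_0}$, so that $\phi_{x_0}(fg) = \phi_{x_0}(\tilde\phi_{x_0} f)(\tilde\phi_{x_0} g)$. Since $s_0 \leq 1$, multiplication by $\phi_{x_0}$ is bounded on $H^{s_0}(\RR^3)$, and the global product estimate yields
\[
\|\phi_{x_0}(fg)\|_{H^{s_0}} \lesssim \|\tilde\phi_{x_0} f\|_{H^{s_1}}\|\tilde\phi_{x_0} g\|_{H^{s_1}}.
\]
The translation-uniform bound $\|\tilde\phi_{x_0} h\|_{H^{s_1}(\RR^3)} \lesssim \|h\|_{H^{s_1}_{uloc}}$ then follows from Lemma \ref{lem.eqvi} (which characterizes $\|\cdot\|_{H^{s_1}_{uloc}}$ via $\nabla$ and $\Lambda^{s_1-1}\nabla$, since $s_1 \in (1,2)$), together with the commutator estimates of Lemma \ref{lem.com} and Corollary \ref{Cor.com} applied to the decomposition $\Lambda^{s_1-1}\nabla(\tilde\phi_{x_0} h) = \tilde\phi_{x_0}\Lambda^{s_1-1}\nabla h + [\Lambda^{s_1-1},\tilde\phi_{x_0}]\nabla h + \Lambda^{s_1-1}((\nabla\tilde\phi_{x_0})h)$, plus a finite covering of $B_3(x_0)$ by unit balls. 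Taking the supremum over $x_0$ gives the $H^{s_1}_{uloc}$ version; for the $\bar H$ version, if $h \in \bar H^{s_1}_{uloc}$ then the same decomposition shows $\|\tilde\phi_{x_0} h\|_{H^{s_1}} \to 0$ as $|x_0|\to\infty$, and hence so does $\|\phi_{x_0}(fg)\|_{H^{s_0}}$.

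The step I expect to require the most bookkeeping is precisely the last localization when $s_1 > 1$: the non-locality of $\Lambda^{s_1-1}$ means that $\tilde\phi_{x_0}$ only commutes with it up to the lower-order operators controlled by Lemma \ref{lem.com} and Corollary \ref{Cor.com}. Because $\mathrm{diam}\,\mathrm{supp}\,\tilde\phi_{x_0}$ is uniform in $x_0$, the constants in those commutator estimates can be taken independent of $x_0$, and the argument closes uniformly.
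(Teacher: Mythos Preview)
Your proposal is correct and follows essentially the same two-step strategy as the paper: establish the global product estimate $\|fg\|_{H^{7/2-3s}}\lesssim\|f\|_{H^{(5-3s)/2}}\|g\|_{H^{(5-3s)/2}}$ on $\RR^3$, then localize via cutoffs. The paper simply cites the global inequality as well known, whereas your Kato--Ponce computation supplies an explicit proof of it.

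The only notable difference is in the localization step. You introduce a fattened cutoff $\tilde\phi_{x_0}$, write $\phi_{x_0}(fg)=\phi_{x_0}(\tilde\phi_{x_0}f)(\tilde\phi_{x_0}g)$, and then work to show $\|\tilde\phi_{x_0}h\|_{H^{s_1}}\lesssim\|h\|_{H^{s_1}_{uloc}}$ via Lemma~\ref{lem.eqvi} and commutator estimates. The paper instead writes $\phi_{x_0}^2(fg)=(\phi_{x_0}f)(\phi_{x_0}g)$, so that the factors on the right are \emph{exactly} the quantities appearing in the definition of $\|\cdot\|_{H^{s_1}_{uloc}}$; this bypasses all of your commutator bookkeeping, leaving only the (standard) equivalence of the uloc norms built from $\phi_{x_0}$ versus $\phi_{x_0}^2$. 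Both routes are valid, but the paper's trick is shorter.
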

\begin{proof}
It's well known that 
\[\|fg\|_{H^{7/2-3\alpha}}\leq C\|f\|_{H^{(5-3\alpha)/2}}\|g\|_{H^{(5-3\alpha)/2}}.\]
Let $\phi\in \mathcal{D}(B_2)$ be a nonnegative function satisfying $\phi=1$ in $B_1$. Hence for all $\phi_{x_0}(\cdot)\triangleq\phi(\cdot-x_0)$, we have that
\[\|\phi^2_{x_0}fg\|_{H^{7/2-3\alpha}}\leq C\|\phi_{x_0}f\|_{H^{(5-3\alpha)/2}}\|\phi_{x_0}g\|_{H^{(5-3\alpha)/2}}.\]
Then, in the light of the definition of Sobolev spaces in local measure, we deduce that \eqref{est.H}.
\end{proof}
Now, we come back to the proof of Theorem \ref{well-loc}.
\begin{proof}
By homogeneous principle, we have that
\begin{align*}
u=e^{- t\Lambda^{2s}}u_0+\int^t_0\nabla\cdot e^{-(t-\tau)\Lambda^{2s}}\mathbb{P}\big(-u
\otimes u\big)(\tau)\,\mathrm{d}\tau\triangleq e^{-\nu t\Lambda^{2s}}u_0+B(u,u).
\end{align*}
Next, we will complete the proof of this theorem in two steps.

\textbf{Step 1: Unique existence in $X_T$.}
For any $v_1$, $v_2\in X_T$, by \eqref{O_t-Lp} in Lemma \ref{lem.O_t}, we have that
\begin{align*}
\|B(v_1,v_2)\|_{L^{\infty}}&\lesssim_s \int^t_{0}(t-\tau)^{-\frac1{2s}}\tau^{\frac1s-2}\,d\tau\big(\sup_{0\leq t<T}t^{1-\frac1{2s}}\|v_1\|_{L^{\infty}}\big)\big(\sup_{0\leq t<T}t^{1-\frac1{2s}}\|v_2\|_{L^{\infty}}\big)\\
&\lesssim_st^{-1+1/(2s)}\big(\sup_{0\leq t<T}t^{1-1/(2s)}\|v_1\|_{L^{\infty}}\big)\big(\sup_{0\leq t<T}t^{1-1/(2s)}\|v_2\|_{L^{\infty}}\big).
\end{align*}
This gives that
\begin{equation}\label{est.23}
\sup_{0\leq t\leq T}t^{1-1/(2s)}\|B(v_1,v_2)\|_{L^{\infty}}\lesssim_s\big(\sup_{0\leq t<T}t^{1-1/(2s)}\|v_1\|_{L^{\infty}}\big)\big(\sup_{0\leq t<T}t^{1-1/(2s)}\|v_2\|_{L^{\infty}}\big).
\end{equation}
In addition, in view of Lemma \ref{lem.eqvi}, by Lemma \ref{lem.O_t} and Lemma \ref{lem.H}, we deduce that
\begin{equation}\label{eq.3}
\begin{split}
\|B(v_1,v_2)\|_{\bar{H}^{1}_{uloc}}
&\lesssim_s \int^t_{0}((t-\tau)^{-\frac1{2s}}+(t-\tau)^{-\frac3{4s}})\|v_1\otimes v_2\|_{\bar{H}^{1/2}_{uloc}}\,d\tau\\
&\lesssim_s\int^t_{0}((t-\tau)^{-\frac1{2s}}+(t-\tau)^{-\frac3{4s}})\|v_1\|_{\bar{H}^{1}_{uloc}}
\|v_2\|_{\bar{H}^{1}_{uloc}}\,\mathrm{d}\tau
\end{split}
\end{equation}
and
\begin{equation}\label{eq.4}
\begin{split}
\|\Lambda^{(3-3s)/2}\nabla B(v_1,v_2)\|_{E^{2}}
&\lesssim_s \int^t_{0}(t-\tau)^{-\frac34}\|v_1\otimes v_2\|_{\bar{H}^{7/2-3s}_{uloc}}\,\mathrm{d}\tau\\
&\lesssim_s \int^t_{0}(t-\tau)^{-\frac34}\|v_1\|_{\bar{H}^{(5-3s)/2}_{uloc}}
\|v_2\|_{\bar{H}^{(5-3s)/2}_{uloc}}\,\mathrm{d}\tau
\end{split}
\end{equation}
where we use that $7/2-3s<1$. These estimates give that
\begin{align}
&\|B(v_1,v_2)\|_{L^{4s/(4s-3)}([0,T];\bar{H}^1_{uloc})}\lesssim_s \|v_1\|_{L^{4s/(4s-3)}([0,T];\bar{H}^{1}_{uloc})}
\|v_2\|_{L^{4s/(4s-3)}([0,T];\bar{H}^{1}_{uloc})},\label{eq.6}\\
&\|\Lambda^{(3-3s)/2}\nabla B(v_1,v_2)\|_{L^4([0,T];E^{2})}
\lesssim_s \|v_1\|_{L^4([0,T];\bar{H}^{(5-3s)/2}_{uloc})}
\|v_2\|_{L^4([0,T];\bar{H}^{(5-3s)/2}_{uloc})}.\label{eq.4}
\end{align}
Collecting \eqref{eq.4} and \eqref{eq.6}, by H\"{o}lder's inequality and Lemma \ref{lem.eqvi}, we have that
\begin{equation}\label{eq.8}
\begin{split}
&\|B(v_1,v_2)\|_{L^4([0,T];\bar{H}^{(5-3s)/2}_{uloc})}\\
&\leq
\|\Lambda^{(3-3s)/2}\nabla B(v_1,v_2)\|_{L^4([0,T];E^{2})}+\|\nabla B(v_1,v_2)\|_{L^4([0,T];\bar{H}^{1}_{uloc})}\\
&\lesssim
\|\Lambda^{(3-3s)/2}\nabla B(v_1,v_2)\|_{L^4([0,T];E^{2})}+\|\nabla B(v_1,v_2)\|_{L^{4s/(4s-3)}([0,T];\bar{H}^{1}_{uloc})} \\
&\lesssim \|v_1\|_{L^{\frac{4s}{4s-3}}([0,T];\bar{H}^{1}_{uloc})}
\|v_2\|_{L^{\frac{4s}{4s-3}}([0,T];\bar{H}^{1}_{uloc})}+\|v_1\|_{L^4([0,T];\bar{H}^{\frac{5-3s}2}_{uloc})}
\|v_2\|_{L^4([0,T];\bar{H}^{\frac{5-3s}2}_{uloc})}.
\end{split}
\end{equation}
Combining with \eqref{est.23}, \eqref{eq.6} and \eqref{eq.8}, we get that for any $v_1,v_2\in X_T$ with $0<T\leq 1$
\begin{equation}\label{est.25'}
\|B(v_1,v_2)\|_{X_T}\leq C_1\|v_1\|_{X_T}\|v_2\|_{X_T}.
\end{equation}

On the other hand, since $\bar{H}^{5/2-2s}_{uloc}\hookrightarrow E_{3/(2s-1)}$,  by Lemma \ref{lem.G_t}, we get that
\begin{equation*}
\sup_{0\leq t\leq T}t^{1-1/(2s)}\|e^{-t\Lambda^{2s}}u_0\|_{L^{\infty}}\lesssim_s\|u_0\|_{\bar{H}^{5/2-2s}_{uloc}}\text{ and }\lim_{t\to 0+}t^{1-1/(2s)}\|e^{-t\Lambda^{2s}}u_0\|_{L^{\infty}}=0.
\end{equation*}
Moreover, adapting the splitting method used in Step 1 in the proof of Theorem \ref{TH1}, we deduce that for all $T\leq 1$
\[\|e^{- t\Lambda^{2s}}u_0\|_{ L^4([0,T];\bar{H}^{(5-3s)/2}_{uloc})\cap L^{4s/(4s-3)}([0,T];\bar{H}^{1}_{uloc})}\lesssim_s\|u_0\|_{\bar{H}^{5/2-2s}_{uloc}}.\]
Collecting all above three estimates, we get that
\begin{equation}\label{est.25}
\|e^{-t\Lambda^{2s}}u_0\|_{X_T}\leq C_2\|u_0\|_{\bar{H}^{5/2-2s}_{uloc}}\text{ and } \lim_{T\to 0+}\|e^{-t\Lambda^{2s}}u_0\|_{X_T}=0.
\end{equation}

According to \eqref{est.25}, we can find a time $T>0$ such that $4C_1\|e^{-t\Lambda^{2s}}u_0\|_{X_T}<1$. Hence, in view of  \eqref{est.25'}, by Banach fixed point theorem,  we conclude that \eqref{NS}-\eqref{NSI} admits a unique mild solution $u\in X_T$. In particular, if $u_0$ satisfies
$4C_1C_2\|u_0\|_{\bar{H}^{5/2-2s}_{uloc}}<1$, we can choose $T=1$.

\textbf{Step 2: Improved regularities.}
First, we show that $u\in L^2([0,T];\bar{H}^{5/2-s}_{uloc})$. It's obvious that
\[\|e^{-t\Lambda^{2s}}u_0\|_{L^2([0,T];\bar{H}^{5/2-s}_{uloc})}\leq C\|u_0\|_{\bar{H}^{5/2-2s}_{uloc}}.\]
On the other hand, along the exact same lines as in the proof of \eqref{est.1}, we get that
\begin{align*}
\|\Lambda^{3/2-s}\nabla B(u,u)\|_{E^2}\leq C_{\nu}\int^t_0(t-s)^{-1}\|u\|^2_{\bar{H}^{(5-3s)/2}_{uloc}}\,\mathrm{d}\tau
\leq\|u\|^2_{L^4([0,T];\bar{H}^{(5-3s)/2}_{uloc})}.
\end{align*}
Collecting the above two inequalities and \eqref{eq.6}, by H\"{o}lder's inequality, we deduce that $u\in L^2([0,T];\bar{H}^{5/2-s}_{uloc})$.

Next, we prove $u\in C([0,T];\bar{H}^{5/2-2s}_{uloc})$. For any $v_1\in C([0,T);\bar{H}^{5/2-2s}_{uloc})$ and $v_2\in X_T$, by Lemma \ref{lem.You}, we get that,
\begin{align*}
\|B(v_1,v_2)\|_{E^{2}}
\leq&C \int^t_{0}(t-\tau)^{-\frac1{2s}}\tau^{\frac1{2s}-1}\,\mathrm{d}\tau\|v_1\|_{L^{\infty}([0,T];E^2)}
\sup_{0\leq t\leq T}t^{1-1/(2s)}\|v_2\|_{L^{\infty}}\\
\leq &C\|v_1\|_{L^{\infty}([0,T];E^2)}
\sup_{0\leq t\leq T}t^{1-1/(2s)}\|v_2\|_{L^{\infty}}
\end{align*}
and
\begin{align*}
\|\Lambda^{5/2-2s}B(v_1,v_2)\|_{E_{2}}
\leq&C \int^t_{0}(t-\tau)^{-\frac1{2s}}\tau^{\frac1{2s}-1}\,\mathrm{d}\tau
\|v_2\|_{L^{\infty}([0,T];\bar{H}^{5/2-2s}_{uloc})}
\sup_{0\leq t\leq T}t^{1-\frac1{2s}}\|v_1\|_{L^{\infty}}\\
\leq &C\|v_2\|_{L^{\infty}([0,T];\bar{H}^{5/2-2s}_{uloc})}
\sup_{0\leq t\leq T}t^{1-1/({2s})}\|v_1\|_{L^{\infty}}.
\end{align*}
Hence we obtain that
\begin{equation}\label{est.26}
\|B(v_1,v_2)\|_{L^{\infty}([0,T];\bar{H}^{5/2-2s}_{uloc})}\leq C\|v_2\|_{L^{\infty}([0,T];\bar{H}^{5/2-2s}_{uloc})}
\sup_{0\leq t\leq T}t^{1-1/(2s)}\|v_2\|_{L^{\infty}},
\end{equation}
which implies that $B(v_1,v_2)\in C([0,T];\bar{H}^{5/2-2s}_{uloc})$. By Lemma \ref{lem.G_t}, we have that
\[\|e^{- t\Lambda^{2s}}u_0\|_{L^{\infty}([0,T];\bar{H}^{5/2-2s}_{uloc})}\leq C\|u_0\|_{\bar{H}^{5/2-2s}_{uloc}}.\]
Since $C^{\infty}_0(\RR^3)$ is dense in $\bar{H}^{5/2-2s}_{uloc}$, we have $e^{- t\Lambda^{2s}}u_0\in C([0,T];\bar{H}^{5/2-2s}_{uloc})$. Based on above analysis, we get that $u\in C([0,T];\bar{H}^{5/2-2s}_{uloc})$.
\end{proof}

\begin{corollary}\label{Cor-6.3}
If $u$ is the mild solution to \eqref{NS} with initial data $u_0\in\bar{H}^{5/2-2s}_{uloc}$ given in Theorem \ref{well-loc}, then, $u$ is a local Leray solution to \eqref{NS} starting from $u_0$.
\end{corollary}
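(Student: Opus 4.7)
The plan is to verify the four defining conditions of Definition \ref{def.ller} one by one for the mild solution $u$ produced by Theorem \ref{well-loc}; conditions (i)--(iii) will follow by unpacking the regularity already established, and only condition (iv) will require nontrivial work.

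For (i), since $s\in[5/6,1)$ gives $5/2-2s\in[1/2,5/6]$ and $5/2-s\geq s$, Lemma \ref{lem.eqvi} yields the embeddings $\bar{H}^{5/2-2s}_{uloc}\hookrightarrow L^2_{uloc}$ and $\bar{H}^{5/2-s}_{uloc}\hookrightarrow H^s_{uloc}$. Thus $u\in C([0,T];\bar{H}^{5/2-2s}_{uloc})$ gives $u\in L^\infty([0,T'];L^2_{uloc})$, and $u\in L^2([0,T];\bar{H}^{5/2-s}_{uloc})$ gives $\Lambda^s u\in L^{2,2}_{uloc}(T')$. Condition (iii), the strong $L^2_{loc}$-convergence to $u_0$, is immediate from the same continuity $u\in C([0,T];\bar{H}^{5/2-2s}_{uloc})\hookrightarrow C([0,T];L^2_{loc})$. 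For (ii), Theorem \ref{TH6-1} guarantees that $u$ solves \eqref{NS} in $\mathcal{D}'$, and the pressure is recovered from $\nabla P=-\nabla\frac{\Div\Div}{\Delta}(u\otimes u)$ (well defined because $u\otimes u\in L^1_{t,x,loc}$), which admits the local decomposition \eqref{pess} exactly as in Remark \ref{rem.1}.

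The heart of the argument is condition (iv). The strategy is to derive the local energy inequality as an \emph{equality} by testing \eqref{NS} against $u\psi$ for nonnegative $\psi\in\mathcal{D}(\RR^3\times\RR^+)$. The regularity available makes every pairing rigorous: the weight $t^{1-1/(2s)}u\in C([0,T];L^\infty)$ together with $u\in L^\infty([0,T];L^2_{uloc})$ handles $\int|u|^2 u\cdot\nabla\psi$ and the pressure pairing, while $u\in L^{4s/(4s-3)}([0,T];\bar{H}^1_{uloc})\cap L^2([0,T];\bar{H}^{5/2-s}_{uloc})$ supplies the integrability required for the $\Lambda^{2s}$ term. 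The latter will be handled by the commutator identity motivating the definition,
\[
\langle \Lambda^{2s}u, u\psi\rangle = \int\psi|\Lambda^su|^2\,\mathrm{d}x + \langle [\tilde\psi,\Lambda^s]\Lambda^su, u\psi\rangle + \langle \tilde\psi\Lambda^s u,[\Lambda^s,\psi]u\rangle,
\]
with $\tilde\psi\psi=\psi$; these are precisely the three corresponding terms in \eqref{eq.loc}. The nonlinear term reduces to $-\tfrac12\int|u|^2 u\cdot\nabla\psi$ via $\Div u=0$, and the pressure term contributes $-\int P\, u\cdot\nabla\psi$.

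The main obstacle is rigorously justifying the distributional pairing $\langle\partial_t u, u\psi\rangle$ and the commutator identity above, since $u$ is only a mild solution. I plan to circumvent this by a standard mollification: apply $J_\varepsilon$ in space (as in Section \ref{sec.3}) to obtain a smooth approximation $u_\varepsilon$ of $u$, derive the local energy \emph{equality} for $u_\varepsilon$ where every manipulation is legitimate, and then pass $\varepsilon\to 0$. The linear terms converge by strong convergence in $L^2_{t,x,loc}$ and $L^2_t H^s_{uloc,loc}$; the commutator pairings pass to the limit thanks to Lemma \ref{lem.com} and Corollary \ref{Cor.com} (giving continuity of $[\Lambda^s,\psi]$ on $L^p_{uloc}$); the nonlinear and pressure terms converge using the $L^3_{loc}$- and $L^{3/2}_{loc}$-bounds. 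The resulting identity yields the desired inequality, establishing (iv) and completing the proof.
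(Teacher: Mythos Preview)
Your approach is correct in spirit but differs from the paper's, and your treatment of (iv) has a subtle gap.

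The paper's proof is more direct: it first notes that the Kato mild solution is actually \emph{smooth} on $\RR^3\times(0,T)$ (a standard bootstrap consequence of the bound $t^{1-1/(2s)}u\in C([0,T];L^\infty)$ contained in $X_T$, though the paper states this without repeating the argument). It then defines $P=\tfrac{\Div\Div}{\Delta}(u\otimes u)$, observes $P\in L^{3/2}((0,T');L^{3/2}_{uloc})$, and upgrades $P$ to $C^\infty$ via interior elliptic estimates. With both $u$ and $P$ classical, multiplying the pointwise equation by $u\varphi$ and integrating yields the local energy \emph{equality} with no approximation machinery at all. The paper then reads off condition (i) as a \emph{consequence} of this energy equality together with $u\in C_b([0,T];E^p)$ and the $L^{3/2}$ bound on $P$, rather than deducing it from Sobolev embeddings as you do.

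Your mollification plan for (iv) is workable but not as written. If ``$u_\varepsilon$'' means $J_\varepsilon u$, then $u_\varepsilon$ does \emph{not} solve \eqref{NS} (or \eqref{MNS}), so there is no local energy equality for $u_\varepsilon$ to derive; if it means the solution of \eqref{MNS} from Section~\ref{sec.3}, you must first argue that those approximations converge to the specific Kato solution $u$, which needs a uniqueness step you have not supplied. The clean way to salvage your route is to test the distributional equation satisfied by $u$ against the admissible test function $\psi\,J_\varepsilon^2 u$ and let $\varepsilon\to0$, tracking the commutators $[J_\varepsilon,\psi]$; this is standard but more laborious than the paper's argument. In short: your verification of (i)--(iii) is fine and arguably cleaner than the paper's indirect route, but for (iv) the paper's observation that $u$ is already smooth buys a much shorter proof.
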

\begin{proof}
From Theorem \ref{well-loc}, we know that $u\in C^{\infty}(\RR^3\times (0,T))$. In addition, due to $u\in C([0,T);E^p)$, we know that $P=\frac{\Div\Div}{\Delta}(u\otimes u)$ is well-defined on $\RR^3\times (0,T)$, which gives that $P\in L^{{3/2}}((0,T');L^{{3/2}}_{uloc})$ for any $T'<T$.
Hence, by elliptic theory and the fact that $u\in C^{\infty}(\RR^3\times (0,T))$, we deduce that
\[\|P(\cdot,t)\|_{C^{k,\alpha}(B_1)}\leq C_k\|u(\cdot,t)\otimes
u(\cdot,t)\|_{C^{k,\alpha}(B_2)}+C_k\|P(\cdot,t)\|_{L^{3/2}(B_2)},\]
which implies that $p$ is smooth on $\RR^3\times (0,T)$. So,
multiplying both sides of the following equation
\[\partial_t u+\Lambda^{2s}u+u\cdot\nabla u+\nabla P=0\]
by $u\varphi$, $\varphi\in \mathcal{D}(\RR^3\times (0,1))$, we derive that $u$ satisfies the local energy inequality \eqref{eq.loc}.
This, together with $u\in C_b([0,1];E^p)$ and $P\in  L^{{3/2}}((0,T');L^{{3/2}}_{uloc})$, gives that $u\in
L^{\infty}((0,T');L^2_{uloc})$ and $\Lambda^s u\in L^{2,2}_{uloc}(T')$ for any $T'<T$. Hence, we say that $u$ is a
local Leray solution to \eqref{NS}-\eqref{NSI} on $\RR^3\times (0,T)$ starting from $u_0$.
\end{proof}
\subsection{Leray theory of Perturbed fractional Navier-Stokes equations}
Similar to the Leary theory for perturbed classical Navier-Stokes equations developed in
Chapter 21 in \cite{LR02}, for the perturbed problem of the incompressible fractional
Navier-Stokes equations:
\begin{equation}\label{PFNS}\tag{PFNS}
\left\{\begin{array}{ll}
\partial_t w+\Lambda^{2s} w+w\cdot\nabla w+w\cdot\nabla v+v\cdot\nabla w+\nabla \bar{P}=0\\
\Div w=0,\\
w(x,0)=w_0,
\end{array}.\right.
\end{equation}
we also have the similar theory.
\begin{theorem}\label{TH.pertu}
Let  $v$ be a divergence free vector filed satisfying $\nabla v\in L^{q}([0,T];E^p)$ with $\tfrac{2s}q+\tfrac{3}p=2s-1$ and $p>3$. Then, for any divergence
free vector $w_0\in L^2$, there exists a weak solution $u$ to \eqref{PFNS} satisfying
\begin{enumerate}
  \item $w\in L^{\infty}([0,T];L^2)\cap L^2([0,T];\dot{H}^s)$;
  \item $\lim_{t\to 0+}\|w(t)-w_0\|_{L^2}=0$;
  \item $\|w(t)\|^2_{L^2}+2\int^t_0\|\Lambda^s w(\tau)\|^2_{L^2}\,\mathrm{d}\tau\leq
      \|w_0\|^2_{L^2}-2\int^t_0\int_{\RR^3}(w\cdot\nabla) v\cdot
      w\,\mathrm{d}x\mathrm{d}\tau;$
  \item for any $\varphi\in\mathcal{D}(\RR^3\times (0,T))$ and $\tilde{\varphi}\in \mathcal{D}(\RR^3)$ satisfying $\tilde{\varphi}\varphi=1$,
  \begin{equation}\label{6.1}
  \begin{split}
       &2\int^t_0\int_{\RR^3} \psi\big|\Lambda^s w\big|^2\,\mathrm{d}x\mathrm{d}\tau\\
       &\leq\int^t_0\int_{\RR^3}\big|w\big|^2\partial_t\psi\,\mathrm{d}x\mathrm{d}\tau
       -2\int^t_0\int_{\RR^3} [\tilde{\psi},\Lambda^{s}]\Lambda^s w\cdot
       w\psi\,\mathrm{d}x\mathrm{d}\tau\\
       &-2\int^t_0\int_{\RR^3}
       [\Lambda^s,\psi]w\cdot(\tilde{\psi}\Lambda^sw)\,\mathrm{d}x\mathrm{d}\tau+\int^t_0\int_{\RR^3}|w|^2
       (w+v)\cdot\nabla\psi\,\mathrm{d}x\mathrm{d}\tau\\
       &-2\int^t_0\int_{\RR^3}(w\cdot\nabla) v\cdot w\psi\,\mathrm{d}x\mathrm{d}\tau+2\int^t_0\int_{\RR^3}\bar{P}\cdot u\nabla\psi\,\mathrm{d}x\mathrm{d}\tau.
      \end{split}
      \end{equation}
\end{enumerate}
\end{theorem}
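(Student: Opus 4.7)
The plan is to run a Leray--Hopf construction on the perturbed system, in direct parallel with Chapter~21 of \cite{LR02} and with Section~\ref{sec.3}, but using the commutator machinery of Section~\ref{sec.2} to handle $\Lambda^{2s}$. First, I would introduce the Friedrich mollifier $J_{\varepsilon}$ already used for \eqref{MNS} and consider, for each $\varepsilon\in(0,1)$, the mild formulation of
\[\partial_t w_{\varepsilon}+\Lambda^{2s}w_{\varepsilon}+\mathbb{P}\bigl(J_{\varepsilon}w_{\varepsilon}\cdot\nabla w_{\varepsilon}+J_{\varepsilon}w_{\varepsilon}\cdot\nabla v+v\cdot\nabla w_{\varepsilon}\bigr)=0,\quad w_{\varepsilon}|_{t=0}=w_0,\]
in the whole-space $L^{2}$ setting. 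A short-time solution in $L^{\infty}_{t}L^{2}\cap L^{2}_{t}\dot{H}^{s}$ is obtained by a Banach fixed-point argument analogous to Step~1 of Section~\ref{sec.3}: Lemma~\ref{lem.G_t} and Lemma~\ref{lem.O_t} give the required heat-kernel bounds, the mollification makes $J_{\varepsilon}w_{\varepsilon}$ an $L^{\infty}$ Lipschitz factor, and the extra linear drift $v\in L^{q}_{t}L^{p}_{x}$ produces only an additional term that is subdominant by Hölder in time.

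Next I derive \emph{uniform} bounds. Testing the mollified equation against $w_{\varepsilon}$ kills $\int J_{\varepsilon}w_{\varepsilon}\cdot\nabla w_{\varepsilon}\cdot w_{\varepsilon}$ and $\int v\cdot\nabla w_{\varepsilon}\cdot w_{\varepsilon}$ by divergence-freeness, leaving
\[\tfrac{d}{dt}\|w_{\varepsilon}\|^{2}_{L^{2}}+2\|\Lambda^{s}w_{\varepsilon}\|^{2}_{L^{2}}=-2\int_{\RR^{3}}(J_{\varepsilon}w_{\varepsilon}\cdot\nabla v)\cdot w_{\varepsilon}\,\mathrm{d}x.\]
The right-hand side is estimated by Hölder in $x$ with exponents $(p,\tfrac{2p}{p-1},\tfrac{2p}{p-1})$ combined with the Sobolev interpolation $\|f\|_{L^{2p/(p-1)}}\lesssim \|f\|_{L^{2}}^{1-\theta}\|\Lambda^{s}f\|_{L^{2}}^{\theta}$ with $\theta=\tfrac{3}{4sp}$, followed by Young:
\[2\Bigl|\int (J_{\varepsilon}w_{\varepsilon}\cdot\nabla v)\cdot w_{\varepsilon}\,\mathrm{d}x\Bigr|\le \|\Lambda^{s}w_{\varepsilon}\|^{2}_{L^{2}}+C\|\nabla v\|^{1/(1-\theta)}_{L^{p}}\|w_{\varepsilon}\|^{2}_{L^{2}}.\]
The hypothesis $\tfrac{2s}{q}+\tfrac{3}{p}=2s-1$ with $p>3$ is exactly what forces $1/(1-\theta)\le q$, so $\|\nabla v\|^{1/(1-\theta)}_{L^{p}}$ is integrable on $[0,T]$ by Hölder; Grönwall then gives a bound on $\|w_{\varepsilon}\|_{L^{\infty}_{t}L^{2}}+\|\Lambda^{s}w_{\varepsilon}\|_{L^{2}_{t}L^{2}}$ depending only on $T$, $\|w_{0}\|_{L^{2}}$ and $\|\nabla v\|_{L^{q}_{t}E^{p}}$, which extends the lifespan to $[0,T]$ by a continuity argument.

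With the uniform bound in hand, the mild formulation plus the pressure representation $\nabla\bar{P}_{\varepsilon}=-\nabla\tfrac{\Div\Div}{\Delta}(J_{\varepsilon}w_{\varepsilon}\otimes w_{\varepsilon}+J_{\varepsilon}w_{\varepsilon}\otimes v+v\otimes w_{\varepsilon})$ yields $\partial_{t}w_{\varepsilon}$ bounded in a negative-regularity space, and Aubin--Lions plus Cantor diagonalization produce a subsequence converging $\ast$-weakly in $L^{\infty}_{t}L^{2}$, weakly in $L^{2}_{t}\dot{H}^{s}$, and strongly in $L^{2}_{t,x,loc}$. Strong local convergence suffices to pass to the limit in every nonlinear term (including $J_{\varepsilon}w_{\varepsilon}\otimes v\to w\otimes v$ by dominated convergence, since $v$ is fixed), which together with weak lower semicontinuity of the $L^{2}$ and $\dot{H}^{s}$ norms yields (1) and the global energy inequality (3). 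Property (2) is obtained exactly as in Step~5 of Section~\ref{sec.3}, combining weak continuity with (3). Finally, the local energy inequality (4) is derived by testing the mollified equation against $w_{\varepsilon}\psi$, unfolding $\langle\Lambda^{2s}w_{\varepsilon},w_{\varepsilon}\psi\rangle$ via the commutator identity of Section~\ref{sec.2} (Lemma~\ref{lem.com}, Corollary~\ref{Cor.com}), and passing to the limit term by term as in Step~4 of Section~\ref{sec.3}; the only new contribution is the limit of $-2\int (J_{\varepsilon}w_{\varepsilon}\cdot\nabla v)\cdot w_{\varepsilon}\psi$, which is controlled by the same Hölder--Sobolev interpolation.

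The main obstacle is the cross term $\int (w\cdot\nabla v)\cdot w$. Because $\nabla v$ appears in undifferentiated form and divergence-freeness only eliminates the companion term $\int (v\cdot\nabla)w\cdot w$, this cross term is genuinely present in the energy identity and must be absorbed into the dissipation. The Sobolev interpolation must produce an exponent on $\|\Lambda^{s}w\|_{L^{2}}$ strictly below two, and then Hölder in time forces exactly the condition $1/(1-\theta)\le q$ that the hypothesis $\tfrac{2s}{q}+\tfrac{3}{p}=2s-1$ with $p>3$ guarantees. Verifying that this bookkeeping carries through uniformly in $\varepsilon$ and, in the cutoff version, through the commutator and pressure terms for (4) is where most of the technical work lies; all remaining steps are routine adaptations of the templates already set up in Sections~\ref{sec.2} and~\ref{sec.3}.
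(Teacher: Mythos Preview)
Your overall architecture matches the paper's: derive an a priori $L^\infty_t L^2\cap L^2_t\dot H^s$ bound by controlling the single surviving cross term $\int(w\cdot\nabla v)\cdot w$, then run the mollification/compactness template of Section~\ref{sec.3}. The paper in fact gives only the a priori estimate in detail and defers the remaining steps to \cite{LR02}, so your expanded outline is fine structurally.

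There is, however, a genuine gap in your treatment of the cross term. You estimate it by global H\"older in $x$ with exponents $(p,\tfrac{2p}{p-1},\tfrac{2p}{p-1})$, which uses $\|\nabla v(t)\|_{L^p(\RR^3)}$. But the hypothesis is $\nabla v\in L^q([0,T];E^p)$, and $E^p$ is the closure of $C^\infty_0$ in $L^p_{uloc}$, \emph{not} $L^p(\RR^3)$; in the application (Step~2 of the proof of Theorem~\ref{TH2}) $v$ is built from data in $\bar H^{5/2-2s}_{uloc}$ and there is no reason for $\nabla v$ to be globally $L^p$. The paper handles this by invoking the pointwise multiplier space $X_r$ of \cite[Chapter~21]{LR02}, using the embedding $E^{3/r}\hookrightarrow X_r$ to get
\[
\|w\cdot\nabla v\|_{L^2}\le C\,\|\nabla v\|_{E^p}\,\|w\|_{H^{3/p}},
\]
after which the interpolation $\|w\|_{H^{3/p}}\lesssim \|w\|_{L^2}+\|w\|_{L^2}^{1-3/(ps)}\|\Lambda^s w\|_{L^2}^{3/(ps)}$ and Young's inequality close the estimate exactly as you intend. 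Your route can be repaired either by quoting this multiplier result or by a cube-covering argument (bound $\sum_k\|w\|_{L^{2p/(p-1)}(Q_k)}^2$ by $\|w\|_{H^{3/p}(\RR^3)}^2$ via local Sobolev and bounded overlap), but as written the global H\"older step fails. Two smaller points: your interpolation exponent should be $\theta=3/(2sp)$ rather than $3/(4sp)$, and you refer once to ``$v\in L^q_t L^p_x$'', which is not what is assumed.
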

\begin{proof}
We first give out a key priori estimate. Multiplying both sides of the first equation of \eqref{PFNS} by $w$ and integrating over
$\RR^3\times (0,t)$, we get that
\begin{align*}
\|w(t)\|^2_{L^2}+2\int^t_0\int_{\RR^3}|\Lambda^s
w|^2\,\mathrm{d}x\mathrm{d}\mathrm{\tau}=\|w_0\|_{L^2}-2\int^t_0\int_{\RR^3}(w
\cdot\nabla)v\cdot w\,\mathrm{d}x\mathrm{d}\mathrm{\tau}.
\end{align*}
Let $X_r$ be the pointwise multiplier space
of negative order defined in Chapter 21 in \cite{LR02}. By Theorem 21.1 in \cite{LR02}, we know that
$E^{\frac3r}\hookrightarrow X_{r}$ for any $r\in (0,1]$. So, we have that
\[\|w\cdot\nabla v\|_{L^2}\leq C\|w\|_{H^{3/p}}\|\nabla v\|_{E^{p}}.\]
Hence, we find that
\begin{align*}
&\|w(t)\|^2_{L^2}+2\int^t_0\int_{\RR^3}|\Lambda^s
w|^2\,\mathrm{d}x\mathrm{d}\mathrm{\tau}\\
&\leq\|w_0\|_{L^2}
+C\int^t_0\|w\|_{L^2}\|w\|_{H^{3/p}}\|\nabla v\|_{E^{p}}\,\mathrm{d}\tau\\
&\leq\|w_0\|_{L^2}
+C\int^t_0\|w\|_{L^2}\big(\|w\|_{L^2}+\|w\|^{1-\frac3{ps}}_{L^2}
\|w\|^{\frac3{ps}}_{\dot{H}^{s}}\big)\|\nabla v\|_{E^{p}}\,\mathrm{d}\tau\\
&\leq\|w_0\|_{L^2}+\int^t_0\int_{\RR^3}|\Lambda^s
w|^2\,\mathrm{d}x\mathrm{d}\mathrm{\tau}
+C\int^t_0\|w\|^2_{L^2}\big(\|\nabla v\|_{E^{p}}+\|\nabla v\|^{\frac{2ps}{2ps-3}}_{E^p}\big)\,\mathrm{d}\tau.
\end{align*}
Then, by Gronwall's inequality, we obtain that for any $0\leq t\leq T$
\begin{equation}\label{est-6-4}
\begin{split}
&\|w(t)\|^2_{L^2}+\int^t_0\int_{\RR^3}|\Lambda^s w|^2\,\mathrm{d}x\mathrm{d}\mathrm{\tau}\\
&\leq C\|w_0\|^2_{L^2}\exp\Big\{t^{1-\frac1{2s}}\big(\sup_{0\leq t\leq T}t^{\frac1{2s}}\|\nabla
v\|_{E^{p}}\big)+t^{1-\frac{p}{2sp-3}}\big(\sup_{0\leq t\leq T}t^{\frac1{2s}}\|\nabla
v\|_{E^{p}}\big)^{\frac{2ps}{2ps-3}}\Big\}.
\end{split}
\end{equation}

Then, based on the priori estimate \eqref{est-6-4}, following the argument used in Chapter 21 in \cite{LR02} to deal with the perturbed classical Navier-Stokes equations, we can prove this theorem. Since the proof is completely parallel to \cite{LR02}, we omit it here.
\end{proof}

\end{document}